\newtheorem{thm}{Theorem}[section]
\newtheorem{notation}[thm]{Notation}
\begin{document}
\title[]{A transformation rule for the index of commuting operators}

\author{Jens Kaad}
\address{Institut de Math\'ematiques de Jussieu,
Universit\'e de Paris VII,
175 rue du Chevaleret,
75013 Paris,
France}
\email{jenskaad@hotmail.com}

\author{Ryszard Nest}
\address{Department of Mathematical Sciences,
Universitetsparken 5,
DK-2100 Copenhagen {\O},
Denmark}
\email{rnest@math.ku.dk}
%
%
%
\thanks{The first author is supported by the Danish Carlsberg Foundation.}
\thanks{The second author is partially supported by the Danish National Research Foundation (DNRF) through the Centre for Symmetry and Deformation.}
\subjclass[2010]{47A13, 47A53; 46A60, 32A10, 13D07, 14C17}
\keywords{Index theory, Koszul complex, Commuting operators, Holomorphic functional calculus, Localization, Multiplicity.}

\begin{abstract}
In the setting of several commuting operators on a Hilbert space one defines the notions of invertibility and Fredholmness in terms of the associated Koszul complex. The index problem then consists of computing the Euler characteristic of such a special type of Fredholm complex.
%

In this paper we investigate transformation rules for the index under the holomorphic functional calculus. We distinguish between two different types of index results:

1) A global index theorem which expresses the index in terms of the degree function of the "symbol" and the locally constant index function of the "coordinates".

2) A local index theorem which computes the Euler characteristic of a localized Koszul complex near a common zero of the "symbol".

Our results apply to the example of Toeplitz operators acting on both Bergman spaces over pseudoconvex domains and the Hardy space over the polydisc.

The local index theorem is fundamental for future investigations of determinants and torsion of Koszul complexes.
\end{abstract}

\maketitle
\tableofcontents
\section{Introduction}
The purpose of this paper is to prepare the stage for the study of determinants and torsion of complexes associated to $n$-tuples $A=(A_1,\ldots,A_n)$ of commuting operators on a Hilbert space $\sH$ (cf.  \cite{Kaa:JTS}). Our basic tool is the study of the localizations of $\sH$ as a module over the ring $\hoc$ of germs of holomorphic functions on the Taylor spectrum of $A$ (cf. \cite{Tay:JSC}). The case of two commuting Toeplitz operators $(T_f,T_g)$ has been studied by Carey and Pincus (\cite{CaPi:JTS}) and the formulas obtained there involve the Tate tame symbol, an expression where the exponents are multiplicities of zeroes of the functions $f$ and $g$ which are holomorphic in the interior of the unit disc. The general case is expected to involve local indices as defined below.

Our principal aim is thus to prove index theorems in the multivariable setup. This takes the form of a transformation rule for the index under Taylor's holomorphic functional calculus. Loosely speaking we start out with a commuting tuple of "variables" $A$ together with a holomorphic "symbol" $g : \T{Sp}(A) \to \cc^m$ on the Taylor spectrum. Under the condition that the commuting tuple $g(A)$ is Fredholm we distinguish between two different index results:

\begin{enumerate}
\item A global index theorem which expresses the Fredholm index of $g(A)$ in terms of the locally constant index function $\la \mapsto \T{Ind}(A-\la)$ associated with the coordinates $A$ and local degrees of the symbol $g$ near the set of zeroes.
\item A local index theorem which expresses the Euler characteristic of a localized Koszul complex at a common zero $\la$ for the symbol $g$ in terms of the local degree (or intersection multiplicity) $\T{deg}_\la(g)$ and the index $\T{Ind}(A - \la)$.
\end{enumerate}

Before we provide more details on the above results, we give some information on the multivariable holomorphic calculus.

Naturally associated to a commuting $n$-tuple $A=(A_1,\ldots,A_n)$ is a Koszul complex (see Definition \ref{d:koszulcomplex}):
$$
K_*(A,\sH)=(\sH\otimes \Lambda_*\cc^n,d_A)
$$
where the boundary operator is of the form $d_A=\sum_i A_i\otimes {\epsilon_i}^*$ and the $\epsilon_i^*$'s are interior multiplication operators with the standard basis vectors in $\cc^n$. $A$ is called invertible if $K_*(A,\sH)$ is contractible and it is called Fredholm if the homology of $K_*(A,\sH)$ is finite dimensional. When $A$ is Fredholm, the index is minus the Euler characteristic of the Koszul complex,
\[
\T{Ind}(A) := -\sum_i {(-1)}^i \T{Dim}_{\cc}(H_i(A,\sH)).
\]
In the case where $n=1$, these definitions coincide with the usual terminology. 

An analogue of the spectrum of a single operator was introduced by Taylor (see \cite{Tay:JSC}). The Taylor spectrum $\T{Sp}(A)$ of $A$ is defined as the set of $\lambda\in \cc^n$ such that $A - \la$ is not invertible. The Taylor spectrum is a compact and non-empty set.
%
%
%
The main property of this spectrum is the fact that it supports the multivariable holomorphic functional calculus, i. e. a continuous homomorphism
$$
\hoc \rightarrow \sL(\sH)
$$
from the germs of analytic functions on $\T{Sp}(A)$ to bounded operators on $\sH$ such that the coordinate functions $z_i$ are mapped to the operators $A_i$.
In particular, given a holomorphic map
$$
g=(g_1,\ldots,g_m):\T{Sp}(A)\rightarrow \cc^m ,
$$
one obtains a new commuting tuple $g(A)=(g_1(A),\ldots g_m(A))$. 
%

The global index theorem can now be stated as follows.

\begin{theorem}
Suppose that $ g(A)$ is Fredholm. In this case the set  $Z(g)=g^{-1}(\{0\})\cap \T{Sp}(A)$ is finite and
$$
\T{Ind}( g(A))=\begin{cases}
\sum_{\lambda \in Z(g)}\T{deg}_\lambda (g)\T{Ind}(A-\lambda )&\mbox{when }n=m\\
0&\mbox{when }n < m
\end{cases}
$$
where $\T{deg}_\lambda (g)$ is the intersection multiplicity of  $\{ g_1=\ldots =g_n=0\}$ at $\lambda$.
\end{theorem}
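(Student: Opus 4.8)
The plan is to reduce both cases to a careful analysis of the Koszul complex $K_*(g(A),\sH)$ together with an algebraic computation over the local ring $\mathcal{O}_{\mathrm{Sp}(A),\lambda}$.

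First I would establish that $Z(g)$ is finite. Since $g(A)$ is Fredholm, the homology of $K_*(g(A),\sH)$ is finite-dimensional; by the spectral mapping theorem for the Taylor spectrum one has $0 \notin \mathrm{Sp}(g(A))$ precisely away from $Z(g)$, and the Fredholm condition forces $Z(g)$ to be contained in the (finite) set where a localized complex fails to be exact. Concretely, $\mathcal{H}$ regarded as a module over $\mathcal{O} = \mathcal{O}_{\mathrm{Sp}(A)}$ is coherent enough (after passing to germs) that the support of the homology sheaf of $K_*(g(A),\sH)$ is the analytic set $Z(g)$, which must be $0$-dimensional, hence finite, when the total homology is finite-dimensional.

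The core step is a localization argument. Fix $\lambda \in Z(g)$, work in the local ring $R = \mathcal{O}_{\lambda}$ of germs at $\lambda$, and let $M$ be the localization of $\mathcal{H}$ at $\lambda$ as an $R$-module. The key observation is that the Euler characteristic of the localized Koszul complex $K_*(g(A),\sH)_\lambda = K_*(g-g(\lambda); M) = K_*(g;M)$ (here $g(\lambda)=0$) can be computed algebraically. When $n=m$, the sequence $g_1,\dots,g_n$ cuts out $\lambda$ as an isolated point, so it is (after localization) a system of parameters; one then invokes the standard result from commutative algebra — Serre's multiplicity formula / the theory of Koszul complexes over local rings — that
\[
\chi\bigl(K_*(g;M)\bigr) = e(g_1,\dots,g_n; M) = \mathrm{deg}_\lambda(g) \cdot \mathrm{length}_R(\text{generic rank data of } M),
\]
and one identifies the multiplicity of $M$ with $\mathrm{Ind}(A-\lambda)$ via the interpretation of the index as the Euler characteristic of $K_*(A-\lambda,\sH)$, which localizes to the rank of $M$ over the regular local ring $R$. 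The global-to-local passage is a Mayer–Vietoris / excision argument: one shows $H_*(g(A),\sH) \cong \bigoplus_{\lambda \in Z(g)} H_*(K_*(g;M_\lambda))$, so that $\mathrm{Ind}(g(A)) = -\sum_\lambda \chi(K_*(g;M_\lambda))$, and the signs work out because $\mathrm{Ind} = -\chi$.

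For the case $n<m$, the idea is that $m-n$ ``extra'' equations over-determine the system: after localizing at $\lambda$, the germ $g$ factors (generically, and one needs to arrange this via a generic linear change of the target coordinates, using that $\mathrm{Sp}(A)$ and hence $Z(g)$ sits in $\cc^n$) so that the first $n$ components already cut out a $0$-dimensional set while the remaining components kill whatever is left; a Koszul-complex bookkeeping argument — e.g. the long exact sequence obtained by splitting off one variable, $K_*(g;M) \simeq \mathrm{cone}(g_m : K_*(g_1,\dots,g_{m-1};M) \to K_*(g_1,\dots,g_{m-1};M))$ — then shows the alternating sum telescopes to $0$. I expect the main obstacle to be precisely this last point made rigorous: ensuring that the analytic local ring $R$ is well-behaved enough (regular, hence Cohen–Macaulay, so that Koszul complexes of systems of parameters are acyclic except in degree $0$) and that $M$, though not finitely generated over $R$ in the naive sense, has a well-defined finite multiplicity — this is where the Fredholm hypothesis must be converted into the statement that the relevant local homology modules are finite-dimensional over $\cc$, which is the technical heart of the whole argument. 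The matching of $e(g;M)$ with the product $\mathrm{deg}_\lambda(g)\,\mathrm{Ind}(A-\lambda)$ then follows from the associativity/additivity of multiplicities along a composition series of $M$.
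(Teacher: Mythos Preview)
Your approach is genuinely different from the paper's, and it has a real gap.

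The paper does \emph{not} prove the global theorem via Serre multiplicities. Instead it argues by perturbation: using Sard's lemma, one can find arbitrarily small $\alpha\in\cc^m$ such that every zero of $g-\alpha$ on $\mathrm{Sp}(A)$ is a \emph{regular} point of $(g_1,\ldots,g_n)$. By homotopy invariance of the Fredholm index, $\mathrm{Ind}(g(A))=\mathrm{Ind}((g-\alpha)(A))$, and by homotopy invariance of degree, $\mathrm{deg}_\lambda(g)$ equals the number of regular zeros of $g-\alpha$ in a small ball around $\lambda$. The regular case is handled directly (Section~5) by an explicit dimension computation using the identity $\mathfrak{m}_\lambda = g_1\mathcal{O}_\lambda+\cdots+g_n\mathcal{O}_\lambda$, which gives $\mathrm{Ind}_\lambda(g(A))=\mathrm{Ind}(A-\lambda)$ when $m=n$ and $0$ when $m>n$. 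Summing over zeros and using local constancy of $\mu\mapsto\mathrm{Ind}(A-\mu)$ off the essential spectrum finishes it. The finiteness of $Z(g)$ is obtained earlier (Theorem~3.2) by identifying $Z(g)$ with the joint spectrum of the tuple $H(A)$ acting on the finite-dimensional space $H(g(A),\sH)$.

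The gap in your argument is the invocation of Serre's formula. The localized module $M=\sH_\lambda$ is \emph{not} finitely generated over $R=\mathcal{O}_\lambda$, so neither the Hilbert--Samuel multiplicity $e(g_1,\ldots,g_n;M)$ nor the ``generic rank'' of $M$ is defined, and the standard commutative-algebra machinery does not apply. You acknowledge this, but the workaround you gesture at --- converting Fredholmness into finite-dimensionality of local homology and then extracting a multiplicity --- is precisely the content of the paper's \emph{local} index theorem (Theorem~8.5), which is strictly harder than the global statement and is in fact proved \emph{using} the global theorem together with an auxiliary tensor-product construction (Section~7). So your plan, as written, is circular relative to the paper's logical order: you are assuming the identification $\chi(K_*(g;M_\lambda))=\mathrm{deg}_\lambda(g)\cdot\mathrm{Ind}(A-\lambda)$, which is the deeper result. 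The perturbation-to-regular-zeros route avoids all of this.
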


We remark that the global index theorem was already proved by Eschmeier and Putinar in \cite[Theorem 10.3.13]{EsPu:SDA}. Their techniques are however different from ours as they rely on the acyclicity properties of \v{C}ech complexes associated with Stein open coverings of the spectrum. See also \cite{Put:BFI} and \cite{Lev:FCO}. The type of localization which we apply is of a more algebraic nature.

The local indices can be defined using the localization procedure alluded to above. From now on we will suppose that $g(A)$ is Fredholm.

Let $\hoc$ denote the ring of germs of holomorphic functions on $\T{Sp}(A)$. The holomorphic functional calculus gives $\sH$ the structure of a $\hoc$-module. For each $\la \in \T{Sp}(A)$ we consider the localized module $\sH_\la$ at the prime ideal ${\mathfrak p}_\lambda =\{ f\in \hoc\mid f(\lambda )=0\}$. The local index at $\la$ is then defined as minus the Euler characteristic of the localized Koszul complex $K_*(g,\sH_\la)$,
\[
\T{Ind}_\la(g(A)) := -\sum_i (-1)^i \T{Dim}_{\cc}(H_i(g,\sH_\la)).
\]
Note that the homology groups are finite dimensional as a consequence of the Fredholmness of $g(A)$.

Another way of constructing the localized modules $H_*(g,\sH_\la)$ consists of decomposing the finite dimensional homology groups $H_*(g(A),\sH)$ into the generalized eigenspaces of the commuting tuple $H_*(A)$ induced by the action of the coordinate tuple $A$. See Proposition \ref{p:locspe}.

An important feature of the localized homology group $H(g,\sH_\la)$ is that it can be turned into a graded module over the ring $\C O_\la$ of power series convergent near $\la \in \T{Sp}(A)$. The associated homomorphism $\C O_\la \to \sL(H(g(A),\sH)(\la))$ makes the diagram
\[
\begin{CD}
\hoc @>>> \C O_\la  \\
@VVV @VVV \\
\sL\big(H(g(A),\sH_{\la})\big) @= \sL\big(H(g(A),\sH_{\la})\big)
\end{CD}
\]
commute. Here the upper horizontal map is the restriction homomorphism and the left vertical map is the homomorphism associated with the natural action of $\hoc$ on $H(g(A),\sH_{\la})$.

The relation between the local indices and the global index can then be described by the summation formula:
\[
\T{Ind}(g(A)) = \sum_{\la \in Z(g)} \T{Ind}_\la(g(A)).
\]

Our local index theorem can now be stated as follows.

\begin{theorem}
Suppose that $g(A)$ is Fredholm and that $g(\la) = 0$. Then the homology groups $H_*(g,\sH_\la)$ are finite dimensional and
\[
\T{Ind}_\la( g(A))= \T{deg}_\lambda (g)\T{Ind}(A-\lambda ),
\]
where $\T{deg}_\lambda (g)$ is the intersection multiplicity of  $\{ g_1=\ldots =g_n=0\}$ at $\lambda$.
\end{theorem}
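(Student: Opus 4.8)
The plan is to trade the ``moving'' tuple $g$ for the fixed coordinate tuple $A-\lambda$ by exploiting the factorisation of $g$ through $A-\lambda$ at the level of holomorphic local rings, and then to recover the factor $\T{deg}_\lambda(g)$ from the associated change of rings. Write $R_0$ and $R_\lambda$ for the rings of convergent power series at $0$ and at $\lambda$, with maximal ideals $\mathfrak m_0$ and $\mathfrak m_\lambda$: both are regular local $\cc$-algebras of dimension $n$ with residue field $\cc$, and the germ $g$ induces a local homomorphism $g^*\colon R_0\to R_\lambda$, $w_i\mapsto g_i$. Restriction along $g^*$ makes $\sH_\lambda$ an $R_0$-module on which the coordinate $w_i$ acts as the localization of $g_i(A)$; since $w_1,\ldots,w_n$ is a regular system of parameters in the regular ring $R_0$, the Koszul complex $K_*(w;R_0)$ resolves $\cc=R_0/\mathfrak m_0$, and tensoring it with $\sH_\lambda$ recovers exactly the complex $K_*(g,\sH_\lambda)$. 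Hence $H_*(g,\sH_\lambda)\cong\T{Tor}^{R_0}_*(\cc,\sH_\lambda)$, and likewise $\T{Tor}^{R_\lambda}_*(\cc,\sH_\lambda)$ is the homology of the Koszul complex of the coordinate tuple $z-\lambda$ on $\sH_\lambda$; being annihilated by $\mathfrak p_\lambda$ the latter coincides with $H_*(A-\lambda,\sH)$, which is finite dimensional because $A-\lambda$ is Fredholm (a consequence of the Fredholmness of $g(A)$ and $g(\lambda)=0$ via the spectral mapping theorem for the essential Taylor spectrum), with Euler characteristic $-\T{Ind}(A-\lambda)$.

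Next I would bring in the multiplicity. Fredholmness of $g(A)$ forces $\lambda$ to be an isolated common zero of $g_1,\ldots,g_n$, so $g\colon(\cc^n,\lambda)\to(\cc^n,0)$ is a finite map germ and $R_\lambda$ is a finite $R_0$-module; being a finite local homomorphism of Cohen--Macaulay local rings of equal dimension, $g^*$ is flat, and therefore $R_\lambda$ is $R_0$-free of rank $d:=\dim_{\cc}R_\lambda/(g_1,\ldots,g_n)=\T{deg}_\lambda(g)$. Feeding this into the change-of-rings spectral sequence
\[
E^2_{pq}=\T{Tor}^{R_\lambda}_p\big(\T{Tor}^{R_0}_q(\cc,R_\lambda),\,\sH_\lambda\big)\ \Longrightarrow\ \T{Tor}^{R_0}_{p+q}(\cc,\sH_\lambda),
\]
the $R_0$-freeness of $R_\lambda$ kills all rows with $q>0$ and identifies the bottom row with $\T{Tor}^{R_\lambda}_p(R_\lambda/(g),\sH_\lambda)$, so the sequence collapses and
\[
H_*(g,\sH_\lambda)\ \cong\ \T{Tor}^{R_\lambda}_*\big(R_\lambda/(g_1,\ldots,g_n),\,\sH_\lambda\big).
\]

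It remains to compute the Euler characteristic of the right-hand side by d\'evissage: $R_\lambda/(g)$ carries a composition series of length $d$ with every subquotient isomorphic to $\cc$, so applying $\T{Tor}^{R_\lambda}_*(-,\sH_\lambda)$ to the associated short exact sequences and using additivity of Euler characteristics --- legitimate since each $\T{Tor}^{R_\lambda}_*(\cc,\sH_\lambda)=H_*(A-\lambda,\sH)$ is finite dimensional, hence inductively so is every term, which incidentally reproves the finiteness of $H_*(g,\sH_\lambda)$ --- gives $\chi\big(\T{Tor}^{R_\lambda}_*(R_\lambda/(g),\sH_\lambda)\big)=d\cdot\chi\big(\T{Tor}^{R_\lambda}_*(\cc,\sH_\lambda)\big)$. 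Stringing the displays together, $-\T{Ind}_\lambda(g(A))=\chi\big(H_*(g,\sH_\lambda)\big)=d\cdot\big(-\T{Ind}(A-\lambda)\big)=-\T{deg}_\lambda(g)\T{Ind}(A-\lambda)$, which is the assertion. I expect the main obstacle to be not any individual algebraic step --- miracle flatness, the change-of-rings spectral sequence and the d\'evissage are all standard --- but the interface with the analysis that licenses them: checking that $\sH_\lambda$ really is the localization of the $\hoc$-module $\sH$ at $\mathfrak p_\lambda$, that the holomorphic functional calculus is compatible with the restriction maps $g^*\colon R_0\to R_\lambda$ and $\hoc\to R_\lambda$, and --- most of all --- that the homology modules in sight are finite dimensional, which is where the Fredholmness hypotheses genuinely enter.
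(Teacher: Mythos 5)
Your proposal takes a genuinely different route from the paper's. The paper proves this theorem by an \emph{analytic} localization: it introduces an auxiliary Hilbert space $\sG_\ep(\la)$ (for instance a Bergman space over a small ball centered at $\la$) carrying a commuting tuple $B$ with $\T{Ind}(B-\la)=-1$, passes to the tensor product $\sH\hot\sG_\ep(\la)$ with the tuple $(A\ot 1 - 1\ot B)\op(g(A)\ot 1)$, and identifies $\T{Ind}_\la(g(A))$ with the \emph{global} Fredholm index of this auxiliary tuple (Proposition \ref{p:locana} and Lemma \ref{l:analocpoi}). It then applies the global index theorem (Theorem \ref{t:indthe}) to the function $h(z,w)=(z-w,g(z))$ and disentangles the degree and index factors via Lemmas \ref{l:degcom} and \ref{l:indproind}. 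Your proposal instead stays inside commutative algebra: change of rings, miracle flatness and d\'evissage. The formal algebraic skeleton of that argument --- the collapse of the change-of-rings spectral sequence once $R_\la$ is $R_0$-free, and the additivity of Euler characteristics over a composition series of $R_\la/(g)$ --- is correct as abstract algebra.

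The gap is in the very first step, precisely where you flag ``the interface with the analysis.'' The object $\sH_\la$ is, by construction, a module over $\hoc_{\G p_\la}$, the localization at $\G p_\la$ of the ring of germs near the \emph{whole} Taylor spectrum. This ring embeds into $\C O_\la=R_\la$ but is in general a proper subring: a convergent power series at $\la$ need not extend, even as a quotient of two functions, to a neighborhood of all of $\T{Sp}(A)$; moreover $\hoc$ --- and hence its localization --- need not even be Noetherian (by Frisch's theorem this requires $\T{Sp}(A)$ to be semianalytic). Consequently $\sH_\la$ carries no evident $R_\la$-module structure, and the entire regular-local-ring apparatus you invoke --- the Koszul resolution of the residue field, $\T{Tor}^{R_\la}$, miracle flatness for $g^*:R_0\to R_\la$, finite-length d\'evissage of $R_\la/(g)$ --- is simply not available over the ring that actually acts on $\sH_\la$. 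For the same reason $g^*:R_0\to R_\la$ need not factor through $\hoc_{\G p_\la}$, so even the $R_0$-module structure on $\sH_\la$ required for your first display is in doubt. What the paper actually establishes in this direction (Propositions \ref{p:powfac} and \ref{p:locspe}) is that the \emph{finite-dimensional homology} $H(g,\sH_\la)\cong H(g(A),\sH)(\la)$ does carry a $\C O_\la$-module structure, via the holomorphic functional calculus applied to the nilpotent tuple $H(A)(\la)-\la$; this does not lift to $\sH_\la$ itself. To make your route rigorous one would have to replace the naive algebraic localization $\sH_\la$ by a genuine analytic localization --- a sheaf model of $\sH$ on which $\C O_\la$ acts, as in Eschmeier--Putinar --- or else, as the paper does, avoid ever invoking a $\C O_\la$-structure on the infinite-dimensional space.
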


The main step in the proof of the local index theorem is an analytic analogue of the algebraic localization procedure described above. In particular it implies the following reciprocity result.

\begin{theorem}
Suppose that $A$ and $B$ are two commuting tuples of the same length. Let $g : \T{Sp}(A) \cup \T{Sp}(B) \to \cc^m$ be holomorphic. Suppose that $g(A)$ and $g(B)$ are Fredholm and that the sets $Z(g) \cap \T{Sp}(A) \cap \T{Sp}_{\T{ess}}(B)$ and $Z(g) \cap \T{Sp}(B) \cap \T{Sp}_{\T{ess}}(A)$ are empty.
Then
\[
\sum_{\mu \in Z(g) \cap \T{Sp}(B)}\T{Ind}(\mu -A) \cd \T{Ind}_\mu(g(B))
= \sum_{\la \in Z(g) \cap \T{Sp}(A)} \T{Ind}_\la(g(A)) \cd \T{Ind}(\la - B).
\]
\end{theorem}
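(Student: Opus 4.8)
The plan is to use the local index theorem to eliminate both families of local indices in favour of the purely geometric degree $\T{deg}$, after which the identity reduces to the observation that two finite sums --- one indexed by $Z(g)\cap\T{Sp}(A)$, the other by $Z(g)\cap\T{Sp}(B)$ --- become, after deleting terms that vanish for spectral reasons, the same sum over the common zero set $Z(g)\cap\T{Sp}(A)\cap\T{Sp}(B)$. Granting the local index theorem there is no analytic content left; what remains is bookkeeping together with one elementary sign observation.

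First I would invoke the global index theorem for $A$ and for $B$ separately: it shows that $Z(g)\cap\T{Sp}(A)$ and $Z(g)\cap\T{Sp}(B)$ are finite, so every sum in sight is finite, and, together with the two emptiness hypotheses on the essential spectra, that each term of the statement is well defined --- if $\la\in Z(g)\cap\T{Sp}(A)$ then $\la\notin\T{Sp}_{\T{ess}}(B)$, so $\la-B$ is Fredholm, and dually for $\mu\in Z(g)\cap\T{Sp}(B)$. Next, for each $\la\in Z(g)\cap\T{Sp}(A)$ the hypotheses of the local index theorem are met --- $g(A)$ is Fredholm and $g(\la)=0$ --- so that $A-\la$ is Fredholm and $\T{Ind}_\la(g(A))=\T{deg}_\la(g)\cd\T{Ind}(A-\la)$, and likewise $\T{Ind}_\mu(g(B))=\T{deg}_\mu(g)\cd\T{Ind}(B-\mu)$ for $\mu\in Z(g)\cap\T{Sp}(B)$; here $\T{deg}_\nu(g)$ depends only on the germ of $g$ at $\nu$, so it is unambiguous when $\nu$ lies in both spectra. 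Using in addition the elementary identity $\T{Ind}(\nu-C)=\T{Ind}(C-\nu)$ --- the Koszul differentials of $\nu-C$ and $C-\nu$ differ by an overall sign and hence define the same homology --- the right-hand side of the theorem becomes the sum over $\la\in Z(g)\cap\T{Sp}(A)$ of $\T{deg}_\la(g)\cd\T{Ind}(A-\la)\cd\T{Ind}(B-\la)$, and the left-hand side the analogous sum over $\mu\in Z(g)\cap\T{Sp}(B)$. Finally, any term of the first sum with $\la\notin\T{Sp}(B)$ carries the vanishing factor $\T{Ind}(B-\la)=0$ (since $B-\la$ is then invertible) and may be deleted, and symmetrically any term of the second sum with $\mu\notin\T{Sp}(A)$ carries $\T{Ind}(A-\mu)=0$ and may be deleted; hence both sides equal
\[
\sum_{\nu\in Z(g)\cap\T{Sp}(A)\cap\T{Sp}(B)}\T{deg}_\nu(g)\cd\T{Ind}(A-\nu)\cd\T{Ind}(B-\nu) ,
\]
which is the asserted identity.

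Granting the local index theorem, the argument above is thus entirely formal --- all the substance has been absorbed into that theorem, and behind it into the analytic localization procedure --- and this is the right place to flag the real difficulty. A proof that does not presuppose the local index theorem, which is what one wants since that theorem is itself deduced from the analytic localization rather than from reciprocity, cannot proceed this way. In that case I would instead start from Proposition \ref{p:locspe}, which identifies, for a commuting tuple, the localized homology $H_*(g,\sH_\la)$ with the generalized $\la$-eigenspace of the finite-dimensional Koszul homology under the induced action of the coordinate tuple. Applying this to $B$ and to $A$, write $P_\mu$ for the associated Riesz idempotent on $H_*(g(B),\sH_B)$ and $Q_\la$ for the one on $H_*(g(A),\sH_A)$; then the left-hand side of the theorem equals $-\sum_i(-1)^i\sum_\mu\T{Ind}(\mu-A)\cd\T{Dim}_\cc\bigl(P_\mu H_i(g(B),\sH_B)\bigr)$ --- the evaluation of the locally constant function $\mu\mapsto\T{Ind}(\mu-A)$ against the spectral decomposition of $H_*(g(B),\sH_B)$ under the $B$-action --- and symmetrically the right-hand side equals $-\sum_i(-1)^i\sum_\la\T{Ind}(\la-B)\cd\T{Dim}_\cc\bigl(Q_\la H_i(g(A),\sH_A)\bigr)$. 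The hard step, and the one I expect to carry the weight of the proof, is then a Fubini-type identity equating these two weighted Euler characteristics; this is precisely where one must exploit the compatibility --- recorded by the commuting square in the introduction --- between the $\hoc$-actions on the two Hilbert spaces and the $\C O_\nu$-module structures carried by the localized homology at the common zeroes $\nu\in Z(g)\cap\T{Sp}(A)\cap\T{Sp}(B)$.
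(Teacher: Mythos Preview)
Your first approach, reducing via the local index theorem, is genuinely different from the paper's proof and is essentially sound when $m\geq n$, but it has a real gap when $m<n$ and it inverts the logical order of the paper.

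The paper does \emph{not} pass through the local index theorem at all. Instead it works on the tensor product $\sK=\sH\hot\sG$ and considers the commuting $(n+m)$-tuple $(A\ot 1-1\ot B)\op(g(A)\ot 1)$. Proposition~\ref{p:locana} (the analytic localization) computes its index as $-\sum_{\la\in Z(g)\cap\T{Sp}(A)}\T{Ind}_\la(g(A))\cd\T{Ind}(\la-B)$, and the same proposition with the roles of $A$ and $B$ reversed gives $-\sum_{\mu\in Z(g)\cap\T{Sp}(B)}\T{Ind}(\mu-A)\cd\T{Ind}_\mu(g(B))$ for the index of $(A\ot 1-1\ot B)\op(1\ot g(B))$. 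Lemma~\ref{l:isodif} then shows that these two tuples have isomorphic Koszul homology, hence equal indices. That is the entire proof: two applications of one proposition and one isomorphism lemma, valid uniformly for every $m$. No degrees, no case analysis.

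Your route, by contrast, eliminates the local indices in favour of $\T{deg}_\nu(g)$. This works for $m=n$ (modulo the boundary bookkeeping you glossed over: one must use that $\T{Ind}_\la(g(A))=0$ for $\la\in\pa\T{Sp}(A)$ and that $\T{Ind}(A-\la)=0$ there too, so that both sides reduce to a sum over $Z(g)\cap\T{Int}(\T{Sp}(A))\cap\T{Int}(\T{Sp}(B))$), and for $m>n$ it is trivial since all local indices vanish. But for $m<n$ the paper offers no formula expressing $\T{Ind}_\la(g(A))$ in terms of a degree --- the relevant result there only says $Z(g)$ lies outside $\T{cl}(\Om)$ or $g(A)$ is invertible --- so your reduction has nothing to feed on. The reciprocity statement is asserted for all $m$, and the paper's tensor-product argument handles them uniformly; your argument does not.

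On the circularity you flagged: in the paper, reciprocity (Section~\ref{s:respro}) and the local index theorem (Section~\ref{s:locind}) are siblings, both deduced from Proposition~\ref{p:locana}, and neither cites the other. So using the local index theorem here is not strictly circular, but it does invoke the harder downstream result to prove the easier upstream one, and it forfeits the uniformity in $m$. Your second sketch --- pairing the index function against the spectral decomposition and seeking a Fubini identity --- is in spirit exactly what Proposition~\ref{p:locana} accomplishes, but the paper makes this precise by realizing the pairing as an honest Fredholm index on $\sH\hot\sG$, which is the idea you are missing.
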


\medskip

The structure of this paper is as follows. 

In the next section we collected the general definitions and results connected with the multivariable holomorphic functional calculus and some results from homological algebra related to Koszul complexes (see subsection \ref{s:koszul}) which are useful later on.

In Section \ref{s:spectrum} we collected some basic results on the behaviour of Fredholm spectrum under holomorphic maps.

The algebraic localization results are proved in Section \ref{s:nilban}.

The local index theorems for regular zeroes of $g$ is proved in Section \ref{s:reg}.

A simple proof of the global index theorem is given in Section \ref{s:gloind} (see Theorem \ref{t:indthe}).

The analytic localization is the subject of Section \ref{s:respro}.

The local index theorem is proved in Section \ref{s:locind}.

\section{Preliminaries}
\noindent{\em Let $A = (A_1,\ldots,A_n)$ be a commuting tuple of linear operators on a vector space $V$ over the complex numbers $\cc$.}
\subsection{Koszul homology}

Let $\La(\cc^n) = \op_{k=1}^n \La_k(\cc^n)$ denote the exterior algebra over $\cc$ on $n$-generators $e_1,\ldots,e_n$. For a subset $I \su \{1,\ldots,n\}$ we use the notation $e_I := e_{i_1} \wlw e_{i_k} \in \La_k(\cc^n)$, $i_1 < \ldots < i_k$. We will give $\La(\cc^n)$ the structure of a Hilbert space in which the basis $\{e_I\}_{I \su \{1,\ldots,n\}}$ is orthonormal. For each $i \in \{1,\ldots,n\}$ we set
$$
\ep_i : \La(\cc^n) \to \La(\cc^n);\ e_I \mapsto e_i \we e_I .
$$
The adjoint of this exterior multiplication operator will be denoted by $\ep_i^* : \La(\cc^n) \to \La(\cc^n)$.

\begin{dfn}\label{d:koszulcomplex}
The \emph{Koszul complex} $ K_*(A,V)$ of $A$ on $V$ is the chain complex
\[
(K_*(A,V),d_A )
\]
where
\(
K_*(A,V) = V \ot \La_*(\cc^n) \mbox{ and } d_A := \sum_{i=1}^n A_i \ot \ep_i^* .
\)
We will denote the homology groups of $K_*(A,V)$ by $H_*(A,V)$  and refer to these vector spaces as the \emph{Koszul homology groups} of $A$.
We will set $H(A,V) := \op_{k=0}^n H_k(A,V)$.

\noindent The tuple $A$ is said to be {\em invertible} if $H(A,V)=\{ 0 \}$.
\end{dfn}


The next definition is fundamental to the present text.

\begin{dfn}
A commuting tuple $A$ is \emph{Fredholm} when the Koszul homology groups $H_*(A,V)$ are finite dimensional. Given a Fredholm commuting tuple $A$, its \emph{Fredholm index} is the integer
\[
\T{Ind}(A) := \sum_{k=0}^n (-1)^{k+1} \T{Dim}_{\cc}(H_k(A,V)).
\]
\end{dfn}

%
Recall that the \emph{mapping cone} $C(\ga,X)$ of a chain map $\ga : X \to X$ is the chain complex
\[
\begin{CD}
X_0 @<\ma{cc}{d & \ga}<< X_1 \op X_0 @<\ma{cc}{d & \ga \\ 0 & -d}<< \ldots @<\ma{cc}{d & \ga \\ 0 & -d}<< X_n \op X_{n-1} @<\ma{c}{\ga \\ -d}<< X_n.
\end{CD}
\]
The homology groups of the mapping cone will be denoted below by $H_*(\ga,X)$.

Suppose that $B\in \T{End}(V)$ commutes with $A_1,\ldots ,A_n$ and let $A \op B$ denote the commuting $(n+1)$-tuple $(A_1,\ldots ,A_n,B)$. Then $B$ defines a chain map $K_*(B) :K_*(A,V)\rightarrow K_*(A,V)$. For future reference let us state the following easy observation.

\begin{lemma}\label{l:koscon}
The Koszul complex $ K_*(A \op B,V)$ is naturally isomorphic to the mapping cone $C\big(B, K_*(A,V)\big)$ of $K_*(B)$. In particular if any of the $A_i$'s is invertible, then $K_*(A,V)$ is contractible (i. e. its homology is zero).
\end{lemma}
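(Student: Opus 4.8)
The plan is to make the isomorphism explicit on the underlying graded vector spaces and then to check that it intertwines the two differentials. Since every element of $\La_k(\cc^{n+1})$ decomposes uniquely as $\omega_0 + e_{n+1}\we\omega_1$ with $\omega_0\in\La_k(\cc^n)$ and $\omega_1\in\La_{k-1}(\cc^n)$, one obtains a canonical linear isomorphism $\Phi_k \colon K_k(A\op B, V)\to K_k(A,V)\op K_{k-1}(A,V)$, namely $v\ot\omega_0 + v'\ot e_{n+1}\we\omega_1 \mapsto (v\ot\omega_0,\,v'\ot\omega_1)$, and by definition the target is the degree $k$ part of the mapping cone $C\big(K_*(B),K_*(A,V)\big)$. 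The family $\Phi = \{\Phi_k\}$ depends only on the standard basis of $\cc^{n+1}$, so it is the natural candidate for the asserted isomorphism, and it remains to verify that $\Phi\circ d_{A\op B} = d^{C}\circ\Phi$, where $d^{C}$ is the mapping cone differential; for $X = K_*(A,V)$ and $\ga = K_*(B)$ this is the differential with matrix form $\left(\begin{smallmatrix} d_A & K_*(B)\\ 0 & -d_A\end{smallmatrix}\right)$ recalled above.

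The verification reduces to two short computations with the contraction operators $\ep_i^*$, carried out on the two summands of $K_k(A\op B,V)$. On $V\ot\La_k(\cc^n)$ one has $\ep_{n+1}^*\omega_0 = 0$, so $d_{A\op B}$ restricts there to $d_A$; this accounts for the first column of $d^{C}$. On $V\ot e_{n+1}\we\La_{k-1}(\cc^n)$ one uses the anticommutation relations $\ep_i^*\ep_{n+1} + \ep_{n+1}\ep_i^* = 0$ for $i\le n$ and $\ep_{n+1}^*\ep_{n+1} + \ep_{n+1}\ep_{n+1}^* = \T{id}$: the first relation shows that $\sum_{i\le n}A_i\ot\ep_i^*$ acts on the $\La_{k-1}$-factor as $-d_A$, giving the lower right entry $-d_A$ of $d^{C}$, while the second shows that $B\ot\ep_{n+1}^*$ acts as $K_*(B)$, giving the upper right entry. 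The same relations handle the two ends of the complex. The only point that genuinely requires care is keeping track of the signs produced by commuting $e_{n+1}$ past the remaining wedge factors; this is exactly what forces the minus sign in the lower right corner of the cone, and it is the main---though entirely routine---obstacle of the argument.

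For the last assertion I would first reorder the tuple so that the invertible coordinate is $A_n$, set $A' := (A_1,\ldots,A_{n-1})$, and apply the isomorphism just established with $B = A_n$ in order to identify $K_*(A,V)$ with the mapping cone $C\big(K_*(A_n),K_*(A',V)\big)$. Since $A_n$ is invertible, $\ga := K_*(A_n)$ is an isomorphism of chain complexes --- on $K_k(A',V) = V\ot\La_k(\cc^{n-1})$ it is a direct sum of copies of $A_n$ --- and it commutes with $d := d_{A'}$. Then the map $s$ sending $(a,b)\in K_k(A',V)\op K_{k-1}(A',V)$ to $(0,\ga^{-1}a)\in K_{k+1}(A',V)\op K_k(A',V)$ is a contracting homotopy for the cone: using $d\ga^{-1} = \ga^{-1}d$ one checks directly that $d^{C}\circ s + s\circ d^{C} = \T{id}$ (including at the ends of the complex). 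Hence $K_*(A,V)$ is contractible and, in particular, has vanishing homology.
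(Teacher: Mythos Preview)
Your proof is correct and follows the same approach as the paper: the paper defines the inverse of your $\Phi$ (denoted $\al_k$) and simply asserts that it is a chain isomorphism, whereas you carry out the verification explicitly. You also supply the explicit contracting homotopy for the second assertion, which the paper's proof omits entirely.
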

\begin{proof}

For each $k \in \{0,\ldots,n+1\}$ we set
\[
\begin{split}
& \al_k : C_k\big(B,K_*(A,V)\big) = K_k(A,V) \op K_{k-1}(A,V) \to K_k(A \op B,V) \\
& \al_k :
\big( (\xi \ot e_I), (\eta \ot e_J) \big) \mapsto \xi \ot e_I + \eta \ot e_{n+1} \we e_J
\end{split}
\]
It is not hard to verify that the collection $\{\al_k\}$ defines a chain isomorphism $\al : C(B,K_*(A,V)) \to K_*(A \op B,V)$.
\end{proof}

\subsection{Finite dimensional case}\label{s:spedec}
Let $A = (A_1,\ldots,A_n)$ be a commuting tuple of linear operators on a vector space $V$ of finite dimension over the complex numbers $\cc$. The classical Lie theorem says that there exists a basis for $V$ in which all $A_i$'s are simultaneously upper triangular. To be more precise, set, for each $\la \in \cc^n$,
\[
V(\la) := \bigcup_{i_1,\ldots,i_n \in \nn} \T{Ker}(A_1 - \la_1)^{i_1} \calca \T{Ker}(A_n - \la_n)^{i_n} \su V
\]
and $\si (A)=\{\la\mid V(\la )\neq \{0\} \}$. The following holds.

\begin{theorem}[Lie theorem]\label{t:spedec} The natural homomorphism
$
\op_{\la \in \si(A)} V(\la)  \rightarrow V
$
is bijective and, for each $\la \in \si(A)$, there exists a basis for $V(\la)$ such that each operator $A_i : V(\la) \to V(\la)$ is represented by an upper triangular matrix with $\la_i$ as the only diagonal entry. In particular, if $V(0)\neq \{0\} $, then it contains a common zero eigenvector for all $A_i$'s.
\end{theorem}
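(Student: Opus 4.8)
The plan is to deduce the theorem from two classical facts: the simultaneous triangularizability of commuting operators over $\cc$, and the generalized eigenspace (primary) decomposition for a single operator, applied iteratively. First I would record the elementary lemma that commuting operators $A_1, \ldots, A_n$ on a nonzero finite dimensional complex vector space have a common eigenvector. This goes by induction on $n$: for $n = 1$ it is the existence of an eigenvalue over the algebraically closed field $\cc$; for the inductive step, choose an eigenvalue $\la_n$ of $A_n$, observe that $W := \T{Ker}(A_n - \la_n)$ is nonzero and invariant under $A_1, \ldots, A_{n-1}$ since these commute with $A_n$, and apply the induction hypothesis to the restricted tuple on $W$. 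From this, induction on $\T{Dim}_\cc(V)$ yields a simultaneous triangularization: if $v$ is a common eigenvector, the operators descend to commuting operators on $V / \langle v \rangle$, a triangularizing basis there lifts to $V$, and placing $v$ first gives a basis of $V$ in which every $A_i$ is upper triangular.

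Next I would establish the decomposition $V = \op_{\la \in \si(A)} V(\la)$. Since the subspaces $\T{Ker}(A_i - \la_i)^k$ increase with $k$ and $V$ is finite dimensional, the union defining $V(\la)$ stabilizes, so with $d := \T{Dim}_\cc(V)$ one has $V(\la) = \bigcap_{i=1}^n \T{Ker}(A_i - \la_i)^d$. Now decompose $V$ with respect to $A_1$ alone into its generalized eigenspaces $V = \op_{\mu} \T{Ker}(A_1 - \mu)^d$, $\mu$ running over the eigenvalues of $A_1$; each summand is invariant under all the $A_i$ because they commute with $A_1$. Decomposing each such summand with respect to $A_2$, then $A_3$, and so on, after $n$ steps I obtain $V = \op_\la \bigl(\bigcap_{i=1}^n \T{Ker}(A_i - \la_i)^d\bigr)$, where $\la$ runs over the $n$-tuples of successively chosen eigenvalues; the summands indexed by $\la \notin \si(A)$ are zero, so this is exactly the asserted bijectivity of the natural map. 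On each summand $V(\la)$ the operator $A_i - \la_i$ is nilpotent by construction, so its characteristic polynomial on $V(\la)$ is a power of $t - \la_i$; applying the simultaneous triangularization to $(A_1|_{V(\la)}, \ldots, A_n|_{V(\la)})$ then gives a basis of $V(\la)$ in which each $A_i$ is upper triangular with $\la_i$ as its only diagonal entry. For the final assertion, if $V(0) \neq \{0\}$ then the first vector $v_1$ of such a triangularizing basis of $V(0)$ satisfies $A_i v_1 = \la_i v_1 = 0$ for every $i$, i.e. it is a common zero eigenvector of all the $A_i$'s.

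I expect the only point requiring care to be the bookkeeping in the iterated primary decomposition: one must check that at each stage the generalized eigenspace of $A_j$ sitting inside a previously obtained summand remains invariant under the remaining operators $A_{j+1}, \ldots, A_n$ (immediate from commutativity, but it should be said), and that the simultaneous components produced after $n$ steps coincide with the $V(\la)$ defined in the statement. All of this is routine. An alternative that avoids the iteration is to invoke the structure theorem for finite dimensional commutative $\cc$-algebras — the (Artinian) subalgebra of $\T{End}(V)$ generated by $A_1, \ldots, A_n$ is a finite product of local rings, which induces the decomposition of $V$ — but the hands-on argument above seems more transparent in this context.
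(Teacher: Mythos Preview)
Your proof is correct. The paper does not actually prove this theorem: it simply invokes it as ``the classical Lie theorem'' and states the conclusion without argument. What you have written is a standard self-contained proof --- the common-eigenvector lemma by induction on $n$, simultaneous triangularization by induction on $\T{Dim}_\cc(V)$, and the iterated primary decomposition to produce $V = \op_\la V(\la)$ --- and each step is sound. The only thing to note is that your alternative via the Artinian structure of the commutative subalgebra generated by the $A_i$ is closer in spirit to how the result is usually packaged as ``Lie's theorem'' (for the abelian, hence solvable, Lie algebra they span), but the hands-on argument you give is entirely adequate here and arguably more transparent for the reader who just wants the decomposition and the triangular form.
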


As a consequence of the Lie theorem we get the next proposition.

\begin{prop}\label{p:exieig} Suppose that $A$ is an $n$-tuple of commuting operators on a finite dimensional vector space $V$. Then
\[
H_k(A ,V) \cong H_k(A,V(0))
\]
for all $k \in \{0,\ldots,n\}$. Furthermore,  the homology group
$$
H_n(A,V(0)) \cong \cap_{i=1}^n \T{Ker}(A_i)
$$
is nonzero if and only if $V(0)\neq \{0\}$.
\end{prop}
\begin{proof}
By the above theorem, $H_* (A,V)\cong \oplus_{\la \in \si(A)} H_* (A,V(\la))$. Suppose now that $\la \in \si (A)$ and $\la\neq 0$, say $\la_i\neq 0$. The operator $A_i : V(\la) \to V(\la)$ is then invertible and the Koszul complex $ K_*(A,V(\la))$ is therefore contractible. This proves the first part of the proposition.

The second part of the proposition follows from the fact that, if $V(0)$ is nonzero, then it contains a vector $\xi\in \cap_{i=1}^n \T{Ker}(A_i)$.
\end{proof}

\subsection{Taylor spectrum}$\mbox{ }$

\bigskip

\noindent{\em From now on we will suppose that $V$ has the additional structure of a Banach space and that the commuting linear operators $A_1,\ldots,A_n : V \to V$ are bounded.}

 \bigskip

 \noindent The Koszul complex can be used to define a good notion of joint spectrum of the n-tuple  $A$, often referred to as the \emph{Taylor spectrum}. See \cite{Tay:JSC}.

\begin{dfn}
The (Taylor)  \emph{spectrum} of $A$ is the set
\[
\T{Sp}(A) := \big\{\la \in \cc^n \, | \, H(A - \la,V) \neq \{0\} \big\}
\]
where $A - \la := (A_1 - \la_1,\ldots,A_n - \la_n)$.
The Taylor spectrum is a compact non-empty subset of $\cc^n$, \cite[Theorem 3.1]{Tay:JSC}.
\end{dfn}

As an immediate corollary of Proposition \ref{p:exieig} we get the following.

\begin{cor}\label{c:joispe} Suppose that $V$ is finite dimensional. Then the following are equivalent:
\begin{enumerate}
\item
$\la \in \T{Sp}(A)$
\item $\la \in \si(A) $
\item $H_n(A - \la,V) \neq \{0\}$.
\end{enumerate}
\end{cor}

The main result about the Taylor spectrum is the existence of a holomorphic functional calculus. In order to formulate this result we need to introduce some notation.

\begin{notation}
Let $\hoc$ denote the ring of germs of analytic functions on $\T{Sp}(A)$, i.e.  analytic functions $f : U \to \cc$ defined on some open set $U \ssu \T{Sp}(A)$ subject to the equivalence relation:
\vspace{5pt}

 \noindent  {\em $f : U \to \cc$ and $g : V \to \cc$ represent the same class in $\hoc$ if they agree on some open set $W$ with $\T{Sp}(A) \su W \su U \cap V$.}
\vspace{5pt}

 \noindent The ring structure is given by the pointwise  sum and product.

The notation $\C O(U)$ refers to the unital ring of holomorphic functions on some domain $U \su \cc^n$.
\end{notation}

For any subalgebra $\sB \su \sL(V)$ of the bounded operators on $V$ its commutant is the algebra
$$
\sB' = \{C \in \sL(V) \, | \, [C,B] = 0 \, , \, \T{ for all } B \in \sB \}
$$.

\begin{theorem}\cite[Theorem 4.8]{Tay:ACS}\label{t:anafun}
Let $\sA \su \sL(V)$ denote the smallest unital $\cc$-algebra which contains the bounded operators $A_1,\ldots,A_n$. There exists a unital homomorphism $\hoc \to \sA''$, $f \mapsto f(A)$ such that $z_i \mapsto A_i$. Furthermore, whenever $f = (f_1,\ldots,f_m) : \T{Sp}(A) \to \cc^m$ is analytic (i. e. $f_k\in \hoc ,\ k=1,\ldots,m$) the following identity holds:
$$
\T{Sp}(f(A)) = f\big(\T{Sp}(A) \big).
$$
\end{theorem}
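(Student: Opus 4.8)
\emph{Existence of the calculus.} The plan is to represent $f(A)$ by an operator-valued Cauchy--Weil integral, in the spirit of Taylor's original construction. Fix a representative $f \in \C O(U)$ with $\T{Sp}(A) \su U \su \cc^n$ and a bounded open $D$ with smooth boundary, $\T{Sp}(A) \su D$ and $\overline{D} \su U$. For $z \notin \T{Sp}(A)$ the tuple $z - A := (z_1 - A_1, \ldots, z_n - A_n)$ is invertible, hence the sheaf-level Koszul complex along $z - A$ is exact on $U \sm \T{Sp}(A)$; combining this with local solvability of $\bar\partial$ and a partition of unity subordinate to $U = D \cup (U \sm \T{Sp}(A))$ one assembles an $\sL(V)$-valued Cauchy--Weil kernel $\omega(A, z)$, holomorphic in $z$ off $\T{Sp}(A)$ and suitably closed. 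Putting
\[
f(A) := (2\pi i)^{-n} \int_{\partial D} f(z)\, \omega(A, z),
\]
independence of the admissible $D$ is Stokes' theorem; $\cc$-linearity is clear; $1(A) = \T{id}$ and $z_i(A) = A_i$ follow by direct evaluation, reducing variable by variable to the one-variable Cauchy formula; the homomorphism property $f(A)\, g(A) = (fg)(A)$ (whence the operators in $f(A)$ commute) follows by Fubini together with the reproducing property of $\omega(A, \cdot)$; and $f(A) \in \sA''$ because $\omega(A, z)$ is built entirely from $z_i - A_i$ and parametrix data lying in $\sA''$, hence commutes with $\sA'$.

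\emph{Spectral mapping.} I would deduce $\T{Sp}(f(A)) = f(\T{Sp}(A))$ from two structural properties of Koszul complexes, applied to the enlarged commuting $(n+m)$-tuple $C := (A_1, \ldots, A_n, f_1(A), \ldots, f_m(A))$ on $V$: the \emph{projection property} $\pi(\T{Sp}(C)) = \T{Sp}(C')$ for a coordinate sub-tuple $C'$ of $C$ (with $\pi$ the corresponding projection), and the \emph{graph property} $\T{Sp}(C) = \{ (\mu, f(\mu)) : \mu \in \T{Sp}(A) \}$. Indeed, projecting the graph onto the last $m$ coordinates and applying the projection property to the sub-tuple $f(A)$ gives exactly $\T{Sp}(f(A)) = f(\T{Sp}(A))$. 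One inclusion in each property is soft. For $\pi(\T{Sp}(C)) \su \T{Sp}(C')$: if $C' - \zeta'$ is invertible then, by Lemma~\ref{l:koscon}, $K_*(C - \zeta, V)$ is an iterated mapping cone over the contractible complex $K_*(C' - \zeta', \cdot)$, hence contractible. For $\T{Sp}(C) \su \{ (\mu, f(\mu)) \}$: reordering so that a given $f_k(A) - w_k$ is adjoined last, $K_*(C - (\zeta, w), V)$ is the mapping cone of $f_k(A) - w_k$ on a complex $X$ in whose differential every $A_i - \zeta_i$ occurs; each $A_i - \zeta_i$ is therefore null-homotopic (via $\ep_i$), so $A$ acts on $H_*(X)$ through the scalars $\zeta_i$; granting the compatibility of the calculus with this subquotient (see below), $f_k(A)$ then acts on $H_*(X)$ through $f_k(\zeta)$, so for $w \neq f(\zeta)$ the long exact homology sequence of the cone gives $H(C - (\zeta, w), V) = \{ 0 \}$.

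\emph{The liftings.} The reverse inclusions carry the content. Granting the easy halves, $\{ (\mu, f(\mu)) : \mu \in \T{Sp}(A) \} \su \T{Sp}(C)$ reduces to the lifting half of the projection property, $\T{Sp}(C') \su \pi(\T{Sp}(C))$: given $\mu'$ in the Taylor spectrum of a sub-tuple $C'$, produce $\mu''$ with $(\mu', \mu'') \in \T{Sp}(C)$. The plan: $H(C' - \mu', V) \neq \{ 0 \}$; the remaining coordinates $C''$ commute with $C' - \mu'$ and induce a commuting tuple on this homology; by non-emptiness of Taylor spectra of commuting tuples on nonzero spaces, its spectrum contains some $\mu''$; and a Grothendieck-type spectral sequence (equivalently, iterated mapping cones) comparing $K_*(C - (\mu', \mu''), V)$ with the Koszul complex of the induced tuple on $H(C' - \mu', V)$ forces $H(C - (\mu', \mu''), V) \neq \{ 0 \}$.

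\emph{Main obstacle.} I expect two genuinely hard points, both in the liftings. First, $H(C' - \mu', V)$ need not be finite-dimensional and the Koszul differentials need not have closed range, so the Lie theorem (Proposition~\ref{p:exieig}) is unavailable and the argument has to be run in the derived category of the functional calculus (or via essential Taylor spectra), with care that the comparison spectral sequence does not cancel entirely. Second, the compatibility invoked above --- that if the coordinates act as scalars $\zeta_i$ on an $\hoc$-module and $\zeta \in \T{Sp}(A)$ then a germ $g$ acts as $g(\zeta)$ --- is not formal: it amounts to writing $g - g(\zeta) = \sum_{i=1}^n (z_i - \zeta_i)\, h_i$ with $h_i \in \hoc$, a Cousin-type division near $\T{Sp}(A)$ that is delicate because the Taylor spectrum need not be holomorphically convex. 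The resulting need to localize the module $V$ at $\zeta$ before carrying out the Koszul reduction is precisely the analytic localization procedure developed in the later sections of this paper.
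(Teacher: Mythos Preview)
The paper does not prove this statement: Theorem~\ref{t:anafun} is quoted verbatim from \cite[Theorem 4.8]{Tay:ACS} and used as a black box, so there is no ``paper's own proof'' to compare against. Your sketch is therefore an attempted reconstruction of Taylor's original argument rather than of anything in this paper.

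As a reconstruction it is broadly on the right track (Cauchy--Weil integral for existence; projection property plus graph identification for spectral mapping), and you correctly flag the two genuine difficulties. However, your proposed resolutions do not close the gaps. For the lifting half of the projection property, the plan ``take a spectral point of the induced tuple on $H(C'-\mu',V)$'' breaks down exactly where you say it does: $H(C'-\mu',V)$ is in general only a vector space, the non-emptiness theorem for Taylor spectra is unavailable, and the spectral-sequence comparison you invoke can indeed collapse to zero. Taylor's actual proof of the projection property does not pass through homology of a sub-tuple; it uses an analytic perturbation/parametrix argument for the full Koszul complex. For the ``compatibility'' step (that $f_k(A)$ acts as $f_k(\zeta)$ on any module on which each $A_i$ acts as $\zeta_i$), the division $g-g(\zeta)=\sum (z_i-\zeta_i)h_i$ with $h_i\in\hoc$ is generally \emph{false} globally on $\T{Sp}(A)$; Taylor circumvents this by a direct Cauchy--Weil computation rather than by algebraic division.

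Finally, your closing sentence is misleading: the localization machinery of Sections~\ref{s:nilban} and~\ref{s:respro} of this paper is developed \emph{after} Theorem~\ref{t:anafun} and relies on it, and it applies only in the Fredholm setting where the relevant homology groups are finite dimensional. It does not, and is not intended to, supply the missing step in the general spectral mapping theorem.
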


A uniqueness result for the holomorphic functional calculus is contained in the following. 

\begin{theorem}\cite[Theorem 5.2.4]{EsPu:SDA}\label{t:anauni}
Let $U \ssu \T{Sp}(A)$ be an open set and let $\Phi : \C O(U) \to \sL(V)$ be a unital homomorphism. Suppose that $\Phi(f) = f(A)$ for all $f \in \C O(\cc^n)$ and that $h(\T{Sp}(A)) = \T{Sp}(\Phi(h))$ for each holomorphic function $h : U \to \cc^k$, $k \geq 1$. We then have the identity $\Phi(h) = h(A)$ for all $h \in \C O(U)$.
\end{theorem}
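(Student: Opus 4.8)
The plan is to reduce the general statement to the known uniqueness of the one-variable holomorphic functional calculus by a restriction argument, using Taylor's functional calculus as the "reference" homomorphism against which $\Phi$ is compared. First I would observe that on the polynomial algebra $\cc[z_1,\dots,z_n] \subseteq \C O(U)$ the two homomorphisms $\Phi$ and $h \mapsto h(A)$ already agree, since $\Phi$ is a unital homomorphism sending $z_i$ to $A_i$ (the latter being forced by $\Phi(f)=f(A)$ for $f\in\C O(\cc^n)$, together with the coordinate functions). The real content is to push this agreement from polynomials to all of $\C O(U)$. Here the key structural input is the spectral-mapping hypothesis: for every holomorphic $h : U \to \cc^k$ one has $h(\T{Sp}(A)) = \T{Sp}(\Phi(h))$, and in particular, applying this with $k=1$ and $h$ a single holomorphic function, $\Phi(h)$ has its spectrum controlled by the values of $h$ on $\T{Sp}(A)$.

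Second, I would exploit the product structure. Given $h \in \C O(U)$, consider the enlarged tuple $(A, \Phi(h))$ — note $\Phi(h)$ commutes with each $A_i$ since $\Phi$ is a homomorphism and $z_i$, $h$ commute in $\C O(U)$. By Theorem \ref{t:anafun} the spectrum of $(A,\Phi(h))$ is contained in $\{(\la, h(\la)) : \la \in \T{Sp}(A)\}$ — this is exactly where the hypothesis $h(\T{Sp}(A)) = \T{Sp}(\Phi(h))$, in its $k \geq 1$ (graph) form applied to the pair $(z,h)$, is used: it forces the joint spectrum of $(A,\Phi(h))$ to sit on the graph of $h$ over $\T{Sp}(A)$. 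Then the function $h$, regarded via its graph, extends to a holomorphic function $\tilde h$ on a neighborhood of that joint spectrum, and one applies the functional calculus of the commuting tuple $(A,\Phi(h))$ to the coordinate-difference function $w - h(z)$. Since $(A,\Phi(h))$ is a commuting tuple whose Taylor functional calculus $g \mapsto g(A,\Phi(h))$ sends $w \mapsto \Phi(h)$ and $z_i \mapsto A_i$, and since $w - h(z)$ vanishes on the spectrum, holomorphic functional calculus machinery (the fact that a function vanishing on the Taylor spectrum may be divided — or more simply, that $\Phi(h)$ and $h(A)$ are both obtained by evaluating compatible functional calculi at the same function) yields $\Phi(h) = h(A)$.

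Third, and more carefully: the cleanest route is probably to note that $\Phi$ and the restriction of Taylor's calculus $\C O(U) \to \sL(V)$ are two unital homomorphisms that agree on $\C O(\cc^n)|_U$, so their difference is controlled by how $\C O(U)$ is generated over $\C O(\cc^n)$ — and then to invoke the Oka–Cartan/Stein-theoretic fact that $\C O(U)$ is, topologically, generated in a controlled way together with the spectral mapping property to run an approximation argument: approximate $h$ uniformly on a Stein neighborhood of $\T{Sp}(A)$ by rational functions with poles off $\T{Sp}(A)$ (possible by Oka–Weil on a polynomially convex neighborhood), use that $\Phi$ and $h\mapsto h(A)$ agree on such rational functions (agreement on polynomials plus the fact that $\Phi(p)=p(A)$ is invertible iff $p$ is nonvanishing on $\T{Sp}(\Phi(p)) = p(\T{Sp}(A))$, so $\Phi(1/p) = (p(A))^{-1}$), and finally pass to the limit using continuity of both homomorphisms.

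The main obstacle I anticipate is establishing continuity of $\Phi$ — or circumventing the need for it. The hypotheses as stated give only that $\Phi$ is a unital algebra homomorphism plus the spectral-mapping condition; they do not a priori assert continuity with respect to the natural Fréchet topology on $\C O(U)$. The spectral-mapping hypothesis is exactly the substitute: it forces $\|\Phi(h)\|$ to be bounded in terms of $\sup_{\T{Sp}(A)}|h|$ up to spectral-radius considerations, which with a Banach-algebra argument (or by passing to $\sA''$ and using that $\Phi$ lands in a commutative subalgebra where spectral radius and norm are comparable after completing) gives the needed automatic continuity. Handling this automatic-continuity step rigorously — deciding whether to derive it or to replace the approximation argument by the more algebraic "division of functions vanishing on the spectrum" argument via the enlarged tuple $(A,\Phi(h))$ — is the crux, and I would expect the enlarged-tuple approach to be the more robust one since it uses only Theorem \ref{t:anafun} and avoids any topological hypothesis on $\Phi$.
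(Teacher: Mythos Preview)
The paper does not supply its own proof of this statement: Theorem~\ref{t:anauni} is quoted directly from \cite[Theorem 5.2.4]{EsPu:SDA} and is used as a black box (in the proof of Proposition~\ref{p:cohcom}). There is therefore no argument in the paper to compare your proposal against.

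For what it is worth, the second of your three approaches --- form the enlarged tuple $(A,\Phi(h))$, apply the spectral-mapping hypothesis to the map $(z,h):U\to\cc^{n+1}$ to force $\T{Sp}(A,\Phi(h))$ onto the graph of $h$, and then read off $\Phi(h)=h(A)$ from the functional calculus of this enlarged tuple --- is essentially the argument Eschmeier and Putinar give. The point you leave slightly imprecise is the last step: vanishing of $w-h(z)$ on the joint spectrum does not by itself give $\Phi(h)-h(A)=0$. What one actually uses is that on a neighborhood of the graph the last coordinate function $w$ and the composite $h\circ p_n$ (with $p_n$ the projection onto the first $n$ coordinates) define the same germ, so the Taylor calculus of $(A,\Phi(h))$ assigns them the same operator; one is $\Phi(h)$ and the other, via the composition property of the Taylor calculus, is $h(A)$. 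Your third route via Oka--Weil approximation runs into exactly the continuity obstacle you flag and is not the intended path; the graph argument avoids it entirely, as you correctly anticipate.
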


The holomorphic functional calculus is functorial with respect to $V$, in fact the following holds.

\begin{prop}\cite[Proposition 4.5]{Tay:ACS}\label{p:functoriality}
Let $V$ and $W$ be Banach spaces, $\Phi:V\rightarrow W$ be a continuous linear map. Suppose that  $A$ (resp. $B$) are commuting $n$-tuples on $V$ (resp. $W$) satisfying $B_i\Phi =\Phi A_i,\ i=1,\ldots n$. Then
$$
\Phi f(A)=f(B)\Phi .
$$
for all $f\in {\mathcal O}(\T{Sp}(A)\cup \T{Sp}(B))$.
\end{prop}

\begin{dfn} In the context of the above theorem we will endow $V$ with the structure of a module over $\hoc$ induced by the homomorphism $f \mapsto f(A)$.
\end{dfn}

\noindent{\em For the rest of the section, suppose that $V$ is a Hilbert space over $\cc$. }

\bigskip

Let $\sC(V)=\sL(V)/\sK(V)$ be the Calkin algebra, the quotient of the C*-algebra of bounded operators on $V$ by the ideal of compact operators. The commuting tuple $\pi(A)$ of bounded operators on $\sC(V)$ is given by the action of the $A_i$'s by left multiplication.

\begin{dfn}
The \emph{essential spectrum} of the commuting tuple $A$ is the Taylor spectrum of $\pi(A)$. It will be denoted by $\T{Sp}_{\T{ess}}(A)$.
\end{dfn}

The relation between the Fredholmness of $A$ and the essential spectrum of $A$ was clarified by Curto in \cite{Cur:FOD}. We state the result as a theorem.

\begin{theorem}{\cite[Corollary 6.2]{Cur:FOD}}\label{t:essspe}
Let $\la \in \cc^n$. The commuting tuple $A -\la$ is Fredholm if and only if $\la \notin \T{Sp}_{\T{ess}}(A)$.
\end{theorem}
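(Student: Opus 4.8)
The plan is to deduce the equivalence from the behaviour of the Koszul complex under passage to the Calkin algebra, combined with standard facts about Fredholm morphisms of Banach complexes. First I would observe that the Koszul complex $K_*(\pi(A) - \la, \sC(V))$ is obtained from $K_*(A-\la,V)$ by tensoring (over $\sL(V)$, or more concretely by applying the quotient map entrywise to the matrices representing $d_{A-\la}$) with the Calkin algebra; equivalently, $d_{\pi(A)-\la} = \pi \circ d_{A-\la}$ acting on $\sC(V) \ot \La_*(\cc^n)$. Thus the statement ``$\la \notin \T{Sp}_{\T{ess}}(A)$'' means exactly that the complex of bounded operators $(\sC(V) \ot \La_*(\cc^n), d_{\pi(A)-\la})$ is exact.

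The key step is the following translation: a bounded complex of Banach spaces $(E_*, \del)$ with $\del$ given by matrices over $\sL(V)$ becomes exact after passing to the Calkin algebra if and only if the original complex $(V \ot \La_*(\cc^n), d_{A-\la})$ is a Fredholm complex, i.e.\ has finite-dimensional homology and closed boundaries. One direction is easy: if $A - \la$ is Fredholm, then each $d_{A-\la}$ has closed range and the homology is finite-dimensional, so one can build, using finite-rank projections onto (complements of) the homology and onto the kernels, operators $s_i : V \ot \La_i \to V \ot \La_{i+1}$ with $d s + s d = 1 - (\text{finite rank})$; reducing mod $\sK(V)$ gives a contracting homotopy for $K_*(\pi(A)-\la,\sC(V))$, whence $\la \notin \T{Sp}_{\T{ess}}(A)$. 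For the converse, suppose $K_*(\pi(A)-\la,\sC(V))$ is exact; then there are operators $t_i$ with $d_{A-\la} t + t\, d_{A-\la} - 1 = K$ compact on each $V \ot \La_i$. A standard argument (the operator $1 + K$ restricted to $\T{Ker}\, d_{A-\la}$ differs from the inclusion-into-a-direct-summand map by a compact perturbation) then forces $\T{Ker}\, d_{A-\la}$ to be a finite extension of $\T{Im}\, d_{A-\la}$ at each degree with closed range, so the Koszul homology $H_*(A-\la,V)$ is finite-dimensional; that is, $A - \la$ is Fredholm.

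I would carry out the argument degree by degree on the finite complex $0 \to V \ot \La_n \to \cdots \to V \ot \La_0 \to 0$, using the elementary fact that a bounded operator between Hilbert spaces which is invertible modulo $\sK$ has closed range and finite-dimensional kernel and cokernel, applied to the ``assembled'' differential $d + d^*$ (or to the even/odd folding of the complex) to keep the bookkeeping clean. The main obstacle — and the only genuinely analytic point — is establishing that exactness in the Calkin algebra yields \emph{closed ranges} upstairs: nullhomotopy data modulo compacts gives a pseudo-inverse $t$ for $d_{A-\la}$ with $d t + t d = 1 + K$, and one must extract from $1 + K$ being Fredholm on each $V \ot \La_i$ the conclusion that the ranges of the $d_{A-\la}$ are closed and the homology finite-dimensional. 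This is where one invokes the theory of Fredholm complexes of Banach spaces (as in Taylor's and Curto's papers); once closedness is in hand, the finite-dimensionality of $H_*(A-\la,V)$ and hence Fredholmness of $A-\la$ follows immediately. Since this is precisely \cite[Corollary 6.2]{Cur:FOD}, I would either cite it directly or reproduce the short homotopy computation sketched above.
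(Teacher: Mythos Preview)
The paper does not prove this theorem at all: it is stated with a citation to \cite[Corollary~6.2]{Cur:FOD} and used as a black box. So there is nothing to compare against; your final sentence (``cite it directly'') is exactly what the authors do.

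That said, your sketch contains a genuine slip worth flagging. You write that $K_*(\pi(A)-\la,\sC(V))$ is obtained from $K_*(A-\la,V)$ ``by applying the quotient map entrywise'', and then that exactness in the Calkin picture yields operators $t_i$ on $V\ot\La_i$ with $d_{A-\la}t+t\,d_{A-\la}-1$ compact. But a contracting homotopy for $K_*(\pi(A)-\la,\sC(V))$ consists of bounded linear maps $\sC(V)\ot\La_i\to\sC(V)\ot\La_{i+1}$; these are operators on the Calkin algebra, not elements of it, and there is no reason they lift to operators on $V\ot\La_*$. The passage from ``exact on $\sC(V)$'' to ``homotopy modulo compacts on $V$'' is precisely the nontrivial content of Curto's result, not a formality.

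The route that actually works on a Hilbert space (and is essentially Curto's) is the one you allude to at the end: pass to the self-adjoint operator $D_{A-\la}:=d_{A-\la}+d_{A-\la}^{\,*}$ on $V\ot\La(\cc^n)$. One shows (i) $A-\la$ is Fredholm iff $D_{A-\la}$ is Fredholm, and (ii) for a commuting tuple $T$ on a $C^*$-algebra (here $\pi(A)$ on $\sC(V)$), exactness of the Koszul complex is equivalent to invertibility of the analogous ``Dirac'' element $\sum_i\big(\pi(A_i-\la_i)\ot\ep_i^*+\pi(A_i-\la_i)^*\ot\ep_i\big)$, which is exactly $\pi(D_{A-\la})$. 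Step (ii) is where the $C^*$-structure of $\sC(V)$ is used and replaces the illegitimate homotopy-lifting; once it is in place, Atkinson's theorem for the single operator $D_{A-\la}$ closes the argument.
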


Let $g = (g_1,\ldots,g_m) : \T{Sp}(A) \to \cc^m$ be an analytic map. In analogy with Theorem \ref{t:anafun} we can characterize the Fredholmness of $g(A)$ in terms of the image set $g(\T{Sp}_{\T{ess}}(A)) \su \cc^m$. See \cite{Lev:TSC} and \cite{Fai:JSL}.

\begin{prop}
The commuting tuple $g(A)$ is Fredholm if and only if $0 \notin g(\T{Sp}_{\T{ess}}(A))$.
\end{prop}

\subsection{A spectral sequence}\label{s:koszul}
A generalization of the mapping cone observation in Lemma \ref{l:koscon} is as follows.
\bigskip

\noindent \emph{Let $A = (A_1,\ldots,A_n)$ and $B = (B_1,\ldots,B_m)$ be commuting tuples of linear operators on a vector space $V$ such that the union $A \op B := (A_1,\ldots,A_n,B_1,\ldots,B_m)$ is a commuting $(n+m)$-tuple.}
\bigskip

Let us define a bigrading of the exterior algebra $\La(\cc^{n+m})$ which reflects the position of $A$ and $B$ in the union $A \op B$. For each pair $p \in \{0,1,\ldots,n\}$ and $q \in \{0,1,\ldots,m\}$ we define the subspace
\[
\begin{split}
& \La_{p,q}(\cc^{n+m}) \\
& \q := \T{span}\big\{e_I \we e_J \, | \, I \su \{1,\ldots,n\} \, , \, |I| = p \, , \, J \su \{n+1,\ldots,n+m\} \, , \, |J| = q\big\}.
\end{split}
\]
where we recall that $e_I := e_{i_1} \wlw e_{i_k}$ for any subset $I = \{i_1,\ldots,i_k\}\su \{1,\ldots,n+m\}$ with $i_1 < \ldots < i_k$. We then have an isomorphism $\La(\cc^{n+m}) \cong \op_{p = 0}^n \op_{q = 0}^m \La_{p,q}(\cc^{n+m})$ and this decomposition turns $\La(\cc^{n+m})$ into a bigraded algebra.

The bigrading of the exterior algebra leads to a bigrading of the Koszul chains $ K_*(A \op B,V) = V \ot \La(\cc^{n+m})$ by defining $K_{p,q}(A \op B,V) := V \ot \La_{p,q}(\cc^{n+m})$. This allows us to view the Koszul complex as the totalization of the following  bicomplex. We define the vertical differential $d^v : K_{p,q}(A \op B,V) \to K_{p-1,q}(A \op B,V)$ by $d^v :=  \sum_{i=1}^n A_i \ot \ep_i^*$ and the horizontal differential by $d^h : K_{p,q}(A \op B,V) \to K_{p,q-1}(A \op B,V)$, $d^h := \sum_{i=1}^m B_i \ot \ep_{i+n}^*$. Since $(d^v)^2 = (d^h)^2 = d^v d^h + d^h d^v = 0$ we get a bicomplex
\begin{equation}\label{eq:koscom}
\begin{CD}
K_{n,0}(A \op B) @<{d^h}<< K_{n,1}(A \op B) @<{d^h}<< \ldots @<{d^h}<< K_{n,m}(A \op B) \\
@V{d^v}VV @V{d^v}VV & & @V{d^v}VV \\
K_{n-1,0}(A \op B) @<{d^h}<< K_{n-1,1}(A \op B) @<{d^h}<< \ldots @<{d^h}<< K_{n-1,m}(A \op B) \\
@V{d^v}VV @V{d^v}VV & & @V{d^v}VV \\
\vdots & & \vdots & & & & \vdots \\
@V{d^v}VV @V{d^v}VV & & @V{d^v}VV \\
K_{0,0}(A \op B) @<{d^h}<< K_{0,1}(A \op B) @<{d^h}<< \ldots @<{d^h}<< K_{0,m}(A \op B).
\end{CD}
\end{equation}
It is not hard to see that the totalization of this bicomplex is isomorphic to our original Koszul complex $ K_*(A \op B,V)$.

Below we will describe  the homology spectral sequence associated with the filtration by rows. For each $i \in \{0,\ldots,n\}$ we define the sub-bicomplex $F_i$ consisting of the rows with indices $0,\ldots,i$ of the bicomplex \eqref{eq:koscom}. This gives a filtration
\[
0 \su F_0 \su F_1 \su \ldots \su F_{n-1} \su F_n = K_{**}(A \op B,V)
\]
with an associated spectral sequence converging to the Koszul homology of $A \op B$ (see f. ex. \cite[Theorem 5.5.1]{Wei:IHA}).
%

\begin{prop}\label{p:filcon}
The homology spectral sequence associated with the row filtration of the bicomplex $K_{**}(A \op B,V)$ converges to the Koszul homology of $A \op B$. The $E^2$-term of this spectral sequence is given by $E^2_{pq} = H_p\big(A , H_q(B,V) \big)$.
\end{prop}
\begin{proof}
Let $p \in \{0,\ldots,n\}$ and $q \in \{0,\ldots,m\}$. By definition the $E^1_{pq}$-term is given by the $q^{\T{th}}$ homology group of the chain complex $F_p/F_{p-1}$. The chain complex $F_p/F_{p-1}$ is given by the $p^{\T{th}}$ row of the bicomplex $K_{**}(A \op B,V)$ which is isomorphic to the chain-complex $\big(K_*(B,V) \ot \La_p(\cc^n), (-1)^p \cd d_B \ot 1\big)$. The term $E^1_{pq}$ of our spectral sequence is thus given by $H_q(B,V) \ot \La_p(\cc^n)$. The differential $d^1 : E^1_{pq} \to E^1_{p-1,q}$ is nothing but the Koszul-differential of the commuting tuple $H_q(A) := \big( H_q(A_1),\ldots,H_q(A_n) \big)$ which acts on the homology group $H_q(B,V)$. It follows that the $E^2_{pq}$-term is given by the homology group $H_p\big( A, H_q(B,V) \big)$ as desired.
\end{proof}

In the remainder of this section we will prove corollaries which will be needed later on.

\begin{prop}\label{p:nontri}
Suppose that $B = (B_1,\ldots,B_m)$ is Fredholm and that the Koszul homology group $H_p(A,H_q(B,V))$ is non-trivial for some $p \in \{0,\ldots,n\}$ and $q \in \{0,\ldots,m\}$. Then there exists a $k \geq p+q$ such that $H_k(A \op B, V)$ is non-trivial as well.
\end{prop}
\begin{proof}
Let $(E^r,d^r)$ denote the homology spectral sequence associated with the row filtration of the bicomplex $K_{**}(A \op B,V)$. Recall that the differential $d^r$ sends $E^r_{pq}$ to $E^r_{p-r,q+r-1}$.
%

Since $B$ is Fredholm by assumption, $H_q(B,V)$ is finite dimensional. By Corollary \ref{c:joispe} the non-triviality of the homology group $H_p(A,H_q(B,V))$ implies that the homology group $H_n(A,H_q(B,V))$ is non-trivial as well.

To continue, note that the homology group $E^{r+1}_{nq}$ can be identified with the kernel of the differential $d^r : E^r_{nq} \to E^r_{n-r,q+r-1}$ for all $r \in \nn$. By Proposition \ref{p:filcon}  $H_n(A,H_q(B,V)) \cong E^2_{nq}$. The above reasoning therefore gives us a non-trivial vector $\xi \in E^2_{nq}$. Suppose now that $\xi \in E^2_{nq}$ determines a class in $E^r_{nq}$ for all $r \geq 2$ thus that $d^r(\xi) = 0$ for all $r \geq 2$. This implies that the limit $E^\infty_{nq}$ is non-trivial and hence, by the convergence of the spectral sequence, that the homology group $H_{n+q}(A \op B,V)$ is non-trivial. Since $n+q \geq p+q$ we can thus assume, without loss of generality, that $d^r(\xi) \neq 0$ for some $r \geq 2$.

This assumption implies in particular that the homology group $E^r_{n-r,q + r-1}$ is non-trivial. But this can only happen if the homology group $E^2_{n-r,q + r -1} = H_{n-r}(A, H_{q + r - 1}(B,V))$ is non-trivial as well. By the Fredholmness assumption on $B$ the homology group $H_n(A,H_j(B,V))$, $j = q + r-1 > q$, is non-zero. Applying the same argument a finite number of times we may assume, without loss of generality, that the homology group $H_n(A,H_m(B,V))$ is non-trivial. But this group agrees with the $E^{\infty}_{nm}$-term of the spectral sequence and hence $H_{n+m}(A \op B,V)$ is non-trivial. This proves the claim of the proposition.
\end{proof}

\begin{prop}\label{p:kostri}
Let $k \in \{0,\ldots,n+m\}$ and suppose that the homology group $H_p(A,H_q(B,V))$ is trivial for all $p \in \{0,\ldots,n\}$ and all $q \in \{0,\ldots,m\}$ with $p+q = k$. Then the homology group $H_k(A \op B,V)$ is trivial.
\end{prop}
\begin{proof}
Let $(E^r,d^r)$ denote as above the homology spectral sequence associated with the row filtration of the bicomplex $K_{**}(A \op B,V)$. By Proposition \ref{p:filcon}  the $E^2$-term is given by $E^2_{pq} := H_p(A,H_q(B,V))$. The assumptions imply that $E^2_{pq} = 0$ for all $p,q$ with $p+q = k$. The convergence of the spectral sequence implies that $H_k(A \op B,V)$ is trivial as well.
\end{proof}

\begin{prop}\label{p:speind}
Suppose that the homology group $H_p(A,H_q(B,V))$ is finite dimensional for all $p \in \{0,\ldots,n\}$ and $q \in \{0,\ldots,m\}$. Then the commuting tuple $A \op B$ is Fredholm and the index is given by
\[
\T{Ind}(A \op B) = \sum_{p,q} (-1)^{p+q+1}\T{Dim}\big(  H_p(A,H_q(B,V)) \big).
\]
\end{prop}
\begin{proof}
By an application of Proposition \ref{p:filcon} we see that the $E^2$-term of the homology spectral sequence associated with the row filtration of $K_{**}(A \op B,V)$ is finite dimensional. Since each term of the spectral sequence is obtained by taking homology groups of its predecessor we get that the $E^r$-term is finite dimensional for all $r \geq 2$. The convergence of the spectral sequence then implies that $\T{Dim}_{\cc}H_k(A \op B,V) = \sum_{p+q = k} \T{Dim}_{\cc}(E^\infty_{pq})$ is finite for each $k \in \{0,\ldots,n+m\}$. This means that $A \op B$ is Fredholm. Furthermore, we see that
\[
\T{Ind}(A \op B) = \sum_{p,q} (-1)^{p+q+1} \T{Dim}_{\cc}(E^\infty_{pq}).
\]
Using one more time that each term of the spectral sequence is the homology of its predecessor together with general facts about Euler characteristics we get that
\[
\sum_{p,q} (-1)^{p+q +1} \T{Dim}_{\cc}(E^\infty_{pq}) = \sum_{p,q} (-1)^{p+q+1} \T{Dim}_{\cc}(E^r_{pq})
\]
for all $r \geq 2$. This proves the claim of the proposition since $E^2_{pq} = H_p(A,H_q(B,V))$.
\end{proof}

\section{Spectrum of Fredholm tuples}\label{s:spectrum}

 \begin{notation}Let $A = (A_1,\ldots,A_n)$ be a commuting tuple of bounded operators on a Hilbert space $\sH$, and let $g : \T{Sp}(A) \to \cc^m$ be a holomorphic map.
 \begin{itemize}
 \item $g(A) = (g_1(A),\ldots,g_m(A))$ denotes the commuting tuple obtained from $A$ and $g$ by the holomorphic functional calculus.
 \item Since each of the operators $g_j(A)$ commute with each of the operators $A_i$, the $A_i$'s induce a commuting tuple of linear operators on the Koszul homology groups $H_*(g(A),\sH)$. We will denote this n-tuple by
     $$
     H_*(A) := (H_*(A_1),\ldots,H_*(A_n)).
     $$
\item $Z(g) := \{\la \in \T{Sp}(A)\, | \, g_1(\la) = \ldots = g_m(\la) = 0\}$ is the set of common zeroes.
 \end{itemize}
When $g(A)$ is Fredholm, the Taylor spectum $\T{Sp}(H_k(A))$ makes sense and is a finite set for each $k \in \{0,\ldots,m\}$. In this case let
\[
\T{Sp}(H(A)) := \cup_{k=0}^m \T{Sp}(H_k(A)).
\]
\end{notation}

\begin{lemma}\label{l:zerspe}
The set of common zeroes $Z(g)$ agrees with the set $\big\{ \la \in \cc^n \, | \, (\la,0) \in \T{Sp}(A \op g(A))\big\}$.
\end{lemma}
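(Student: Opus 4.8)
The plan is to realise the commuting $(n+m)$-tuple $A \op g(A)$ as the image of a single holomorphic map under Taylor's functional calculus, and then read off its Taylor spectrum directly from the spectral mapping theorem, Theorem \ref{t:anafun}.

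First I would introduce the holomorphic map
\[
h := (z_1,\ldots,z_n,g_1,\ldots,g_m) : \T{Sp}(A) \to \cc^{n+m}.
\]
Each of its coordinates is a germ of an analytic function on $\T{Sp}(A)$ — the $z_i$ being the restrictions of the coordinate functions on $\cc^n$, the $g_j$ being the components of $g$ — so $h$ is analytic in the sense of Theorem \ref{t:anafun}. Since the holomorphic functional calculus $\hoc \to \sA''$ is a unital homomorphism with $z_i \mapsto A_i$, and since by definition $g_j \mapsto g_j(A)$, we obtain
\[
h(A) = \big( A_1,\ldots,A_n, g_1(A),\ldots,g_m(A) \big) = A \op g(A).
\]
Applying Theorem \ref{t:anafun} to $h$ then yields
\[
\T{Sp}(A \op g(A)) = \T{Sp}(h(A)) = h\big( \T{Sp}(A) \big) = \big\{ (\la, g(\la)) \mid \la \in \T{Sp}(A) \big\}.
\]

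It remains to unwind the definitions. A point of the form $(\la,0)$ lies in the right-hand side precisely when $(\la,0) = (\mu, g(\mu))$ for some $\mu \in \T{Sp}(A)$; this forces $\mu = \la$, hence $\la \in \T{Sp}(A)$ and $g(\la) = 0$, i.e. $\la \in Z(g)$. Conversely, if $\la \in Z(g)$ then $\la \in \T{Sp}(A)$ and $g(\la) = 0$, so $(\la,0) = (\la, g(\la)) \in h(\T{Sp}(A)) = \T{Sp}(A \op g(A))$. This gives the claimed equality of sets.

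There is no serious obstacle here; the one point that deserves a line of justification is the compatibility of notation — one must check that feeding the combined map $h$ into the single calculus $\hoc \to \sA''$ really does reproduce the tuple $A \op g(A)$, i.e. that the $m$-tuple $g(A)$ occurring in the statement is the one obtained from this calculus, which is immediate from the way $g(A)$ was defined from the components of $g$. (One could alternatively argue more directly with Koszul complexes, using Lemma \ref{l:koscon} together with the row-filtration spectral sequence of Proposition \ref{p:filcon}, but the route through the spectral mapping theorem is by far the shortest.)
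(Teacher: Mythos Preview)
Your proof is correct and follows exactly the same route as the paper: the paper's one-line argument simply observes that, by Theorem~\ref{t:anafun}, $\T{Sp}(A \op g(A))$ coincides with the graph of $g : \T{Sp}(A) \to \cc^m$, which is precisely what you spell out via the map $h = (z,g)$. Your version just makes the unwinding of definitions explicit.
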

\begin{proof}
In fact, by Theorem \ref{t:anafun}, $\T{Sp}(A \op g(A))$ coincides with the graph of the map $g : \T{Sp}(A) \to \cc^m$.
\end{proof}

\begin{theorem}\label{t:equzerspe}
Suppose that $g(A)$ is Fredholm. Then
$$
Z(g) = \T{Sp}({H}(A)).
$$
In particular, the set of common zeroes for $g$ is finite.
\end{theorem}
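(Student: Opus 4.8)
The plan is to reduce the identity $Z(g) = \T{Sp}(H(A))$ to a pointwise statement and then feed that statement through the row--filtration spectral sequence of Proposition \ref{p:filcon}. First I would invoke Lemma \ref{l:zerspe} to rewrite $Z(g)$ as $\big\{\la \in \cc^n \mid (\la,0) \in \T{Sp}(A \op g(A))\big\}$; it then suffices to show, for each fixed $\la$, that $(\la,0) \in \T{Sp}(A \op g(A))$ if and only if $\la \in \T{Sp}(H_q(A))$ for some $q \in \{0,\ldots,m\}$. To this end I would apply Proposition \ref{p:filcon} to the pair of commuting tuples $A - \la$ (length $n$) and $g(A)$ (length $m$) acting on $\sH$: the associated spectral sequence converges to $H_*\big((A-\la) \op g(A),\sH\big)$ and has $E^2_{pq} = H_p\big(A - \la, H_q(g(A),\sH)\big)$. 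Since $g(A)$ is Fredholm by hypothesis, each coefficient space $H_q(g(A),\sH)$ is finite dimensional, and $A - \la$ acts on it through the induced tuple $H_q(A) - \la$.

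The key step is the equivalence
\[
(\la,0) \in \T{Sp}(A \op g(A)) \iff E^2_{pq} \neq \{0\} \text{ for some } p,q .
\]
For ``$\Leftarrow$'' I would use Proposition \ref{p:nontri} with $B = g(A)$: if $E^2_{pq} = H_p(A-\la, H_q(g(A),\sH))$ is non-trivial for some $p,q$, then $H_k\big((A-\la) \op g(A),\sH\big)$ is non-trivial for some $k \geq p+q$, whence $(\la,0) \in \T{Sp}(A \op g(A))$. For ``$\Rightarrow$'' I would argue contrapositively via Proposition \ref{p:kostri}: if every $E^2_{pq}$ vanishes then, for each $k$, all the terms $H_p(A-\la,H_q(g(A),\sH))$ with $p+q = k$ vanish, so $H_k\big((A-\la)\op g(A),\sH\big) = \{0\}$ for every $k$, i.e. $(\la,0) \notin \T{Sp}(A \op g(A))$. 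Finally, since $H_q(g(A),\sH)$ is finite dimensional, the condition ``$H_p(A-\la,H_q(g(A),\sH)) \neq \{0\}$ for some $p$'' is, by the very definition of the Taylor spectrum (and Corollary \ref{c:joispe}), equivalent to $\la \in \T{Sp}(H_q(A))$. Combining these observations gives $(\la,0) \in \T{Sp}(A \op g(A))$ iff $\la \in \cup_{q=0}^m \T{Sp}(H_q(A)) = \T{Sp}(H(A))$, which together with the first step yields $Z(g) = \T{Sp}(H(A))$. Finiteness of $Z(g)$ is then immediate, since each $\T{Sp}(H_q(A))$ is the Taylor spectrum of a tuple on a finite dimensional space and hence finite by Corollary \ref{c:joispe}.

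Since Propositions \ref{p:nontri} and \ref{p:kostri} are already available, the argument amounts to assembling these pieces, so there is no serious obstacle. The point that genuinely requires care --- and where the Fredholmness hypothesis enters decisively --- is the passage between non-vanishing of the $E^2$--page and non-vanishing of the total homology: a priori the spectral sequence differentials could both destroy classes before reaching $E^\infty$ and leave $E^\infty$ non-trivial only in bidegrees far from $(p,q)$, and it is exactly this that Proposition \ref{p:nontri} controls, using the finite dimensionality of $H_q(g(A),\sH)$ together with Corollary \ref{c:joispe} to propagate non-triviality to the top row of each page. Everything else is a direct unwinding of definitions.
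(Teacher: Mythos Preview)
Your proposal is correct and follows essentially the same route as the paper: the paper's proof is exactly the chain of bi-implications $\la \in Z(g) \Leftrightarrow H((A-\la)\op g(A),\sH)\neq\{0\} \Leftrightarrow H(A-\la,H(g(A),\sH))\neq\{0\} \Leftrightarrow \la \in \T{Sp}(H(A))$, citing Lemma \ref{l:zerspe} for the first step and Propositions \ref{p:nontri} and \ref{p:kostri} for the second. You have simply unpacked these steps more explicitly, making visible the role of the spectral sequence of Proposition \ref{p:filcon} that underlies those two propositions; the only superfluous detail is the appeal to Corollary \ref{c:joispe} in your penultimate step, since the equivalence there is the definition of the Taylor spectrum.
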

\begin{proof}
The result of the theorem follows from the following bi-implications:
\[
\begin{split}
\la \in Z(g)
& \Leftrightarrow H( (A-\la) \op g(A), \sH) \neq \{0\}
\Leftrightarrow H\big(A-\la,H(g(A),\sH)\big) \neq \{0\} \\
& \Leftrightarrow \la \in \T{Sp}({H}(A)).
\end{split}
\]
The first bi-implication is the statement of Lemma \ref{l:zerspe} and the second one follows from Proposition \ref{p:nontri} and Proposition \ref{p:kostri}. The last bi-implication follows by definition of the set $\T{Sp}({H}(A))$.
\end{proof}

\begin{notation}For each zero $\la \in Z(g)$ and each $k \in \{0,\ldots,m\}$, $H_k(g(A),\sH)(\la) \su H_k(g(A),\sH)$ denotes the finite dimensional vector space coming from the spectral decomposition of $H_k(g(A),\sH)$ with respect to the commuting tuple ${H}_k(A)$ (see Section \ref{s:spedec}).
\end{notation}

The above spectral decomposition of the Koszul homology groups allows us to define a local version of the Fredholm index.

\begin{dfn}\label{d:locind}
Suppose that $g(A)$ is Fredholm and that $\la \in Z(g)$. The \emph{local index} of $g(A)$ at $\la$ is the integer
\[
\T{Ind}_\la(g(A)) := \sum_{k=0}^m (-1)^{k+1} \T{Dim}_{\cc}\big( H_k(g(A),\sH)(\la) \big).
\]
\end{dfn}

The relation between the local indices for $g(A)$ and the global index for $g(A)$ is given by the following:

\begin{prop}\label{p:sumind}
Suppose that $g(A)$ is Fredholm. Then the index of $g(A)$ can be computed as the sum of the local indices. Thus,
\[
\T{Ind}(g(A)) = \sum_{\la \in Z(g)} \T{Ind}_\la(g(A)).
\]
\end{prop}
\begin{proof}
Let $k \in \{0,\ldots,m\}$. By Theorem \ref{t:spedec}, Corollary \ref{c:joispe}, and Theorem \ref{t:equzerspe} we have the following isomorphisms
\[
\begin{split}
H_k(g(A),\sH) & \cong \bop_{\la \in \si({H}_k(A))} H_k(g(A),\sH)(\la)
\cong \bop_{\la \in \T{Sp}({H}_k(A))} H_k(g(A),\sH)(\la) \\
& \cong \bop_{\la \in Z(g)} H_k(g(A),\sH)(\la)
\end{split}
\]
where we recall that $H_k(g(A),\sH)(\la) = \{0\}$ whenever $\la \notin \si({H}_k(A))$. This immediately implies the result of the proposition.
\end{proof}

\section{Algebraic localization}\label{s:nilban}
Let $\sH$ be a Hilbert space and let $A = (A_1,\ldots,A_n)$ be a commuting tuple of bounded operators on $\sH$. Recall that the analytic functional calculus (Theorem \ref{t:anafun}) gives $\sH$ the structure of a unital $\hoc$-module.

Throughout this section the following assumption will be in use.
\bigskip

\noindent \emph{ Let $g : \T{Sp}(A) \to \cc^m$ is a holomorphic map such that $g(A) := (g_1(A),\ldots,g_m(A))$ is Fredholm. Let $\la \in \T{Sp}(A)$.}

\bigskip

By the results in Section \ref{s:spectrum} there is a decomposition
\[
H(g(A),\sH) \cong  \bop_{\mu \in Z(g)} H(g(A),\sH)(\mu)
\]
where each $H(g(A),\sH)(\mu) \su H(g(A),\sH)$ is a generalized eigenspace for the commuting tuple $H(A) := (H(A_1),\ldots,H(A_n))$.

\begin{notation}
For each $f \in \hoc$, let 
\[
H(f(A))(\la) : H(g(A),\sH)(\la) \to H(g(A),\sH)(\la)
\]
denote the associated endomorphism of the generalized eigenspace. In particular we have the commuting tuple
\[
H(A)(\la) := \big(H(A_1)(\la),\ldots,H(A_n)(\la)\big).
\]
\end{notation}

Let $f \in \hoc$. Since $H(A_i)(\la) - \la_i$ is nilpotent for each $i \in \{1,\ldots,n\}$. If $g(\la) = 0$ we thus get that $\T{Sp}\big(H(A)(\la)\big) = \{\la\}$. The analytic functional calculus therefore yields a linear operator 
\[
f(H(A)(\la)) : H(g(A),\sH)(\la) \to H(g(A),\sH)(\la).
\]
If $g(\la) \neq 0$, $H(g(A),\sH)(\la) = \{0\}$.

The first aim of this section is to prove the identity
\begin{equation}\label{eq:cohcom}
f\big( H(A)(\la) \big) = H(f(A))(\la).
\end{equation}

\begin{lemma}\label{l:cohcom}
For each $f \in \C O(\cc^n)$ we have the identity
\[
f\big( H(A)(\la) \big) = H\big(f(A)\big)(\la)
\]
of endomorphisms of the generalized eigenspace $H\big(g(A),\sH\big)(\la)$.
\end{lemma}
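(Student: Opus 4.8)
The plan is to reduce the identity \eqref{eq:cohcom} for polynomials to an identity at the level of Koszul chains and then pass to generalized eigenspaces, using that the holomorphic functional calculus restricted to $\C O(\cc^n)$ is just polynomial substitution. First I would observe that for a coordinate function $z_i$ the claim is essentially a tautology: $z_i(A) = A_i$, the induced endomorphism of $H_*(g(A),\sH)$ is $H_*(A_i)$ by definition, and restricting to the generalized eigenspace $H(g(A),\sH)(\la)$ gives exactly $H(A_i)(\la)$, which is by construction $z_i(H(A)(\la))$. So the statement holds for the generators $z_1,\ldots,z_n$.

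Next I would argue that both sides of the asserted identity are unital algebra homomorphisms $\C O(\cc^n) \to \sL\big(H(g(A),\sH)(\la)\big)$ in the variable $f$. The right-hand side $f \mapsto H(f(A))(\la)$ is a homomorphism because $f \mapsto f(A)$ is a unital homomorphism $\hoc \to \sA''$ (Theorem \ref{t:anafun}), passing to an action on the Koszul homology $H_*(g(A),\sH)$ is functorial (any operator commuting with the $g_j(A)$ induces a chain map on $K_*(g(A),\sH)$, and this assignment is multiplicative and unital), and restricting a simultaneously-invariant operator to a generalized eigenspace of the commuting tuple $H(A)$ is again multiplicative and unital. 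The left-hand side $f \mapsto f(H(A)(\la))$ is a homomorphism because it is the analytic functional calculus of the commuting tuple $H(A)(\la)$ on the finite dimensional space $H(g(A),\sH)(\la)$ — note this makes sense precisely because $\T{Sp}(H(A)(\la)) = \{\la\} \su \cc^n$ as remarked just before the lemma, so every entire function is holomorphic on a neighbourhood of the spectrum. Two unital algebra homomorphisms out of $\C O(\cc^n)$ that agree on the coordinate functions $z_1,\ldots,z_n$ agree on all polynomials $\cc[z_1,\ldots,z_n]$ by multiplicativity and linearity.

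To upgrade from polynomials to all of $\C O(\cc^n)$ I would invoke the uniqueness theorem for the holomorphic functional calculus, Theorem \ref{t:anauni}: it suffices to check that the homomorphism $\Phi : \C O(\cc^n) \to \sL\big(H(g(A),\sH)(\la)\big)$, $\Phi(f) = H(f(A))(\la)$, satisfies $h(\T{Sp}(H(A)(\la))) = \T{Sp}(\Phi(h))$ for every holomorphic $h : \cc^n \to \cc^k$. Since $H(g(A),\sH)(\la)$ is finite dimensional and $\T{Sp}(H(A)(\la)) = \{\la\}$, this amounts to the statement that $\T{Sp}\big(H(h(A))(\la)\big) = \{h(\la)\}$, i.e. that $H(h(A))(\la) - h(\la)$ is a nilpotent commuting tuple. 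This in turn follows from $H(A)(\la) - \la$ being nilpotent: $h(A) - h(\la)$ lies in the ideal of $\hoc$ generated by $z_1 - \la_1, \ldots, z_n - \la_n$ (as $h - h(\la)$ vanishes at $\la$, one writes $h - h(\la) = \sum_i (z_i - \la_i) h_i$ with $h_i$ holomorphic near $\la$), so $H(h(A))(\la) - h(\la)$ is in the (nilpotent, commutative) ideal generated by the nilpotents $H(A_i)(\la) - \la_i$ inside the commutant. Applying Theorem \ref{t:anauni} with $V = H(g(A),\sH)(\la)$ then identifies $\Phi(h) = h(H(A)(\la))$ for all entire $h$, which is the desired identity restricted to $\C O(\cc^n)$.

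The main obstacle I anticipate is the verification that $f \mapsto H(f(A))(\la)$ really is a homomorphism $\C O(\cc^n) \to \sL\big(H(g(A),\sH)(\la)\big)$ with the correct spectral behaviour — more precisely, tracking carefully that the functional-calculus operator $f(A) \in \sA''$ does commute with the $g_j(A)$ and hence induces a chain endomorphism of $K_*(g(A),\sH)$, that this induced action on homology is compatible with the $H(A)$-action (so that it preserves the generalized eigenspace decomposition), and that restriction to $H(g(A),\sH)(\la)$ is therefore a well-defined unital homomorphism. Once this bookkeeping is in place, the rest is the standard polynomial-density / uniqueness argument.
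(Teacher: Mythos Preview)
Your argument for polynomials is fine, but the passage from polynomials to all of $\C O(\cc^n)$ via Theorem~\ref{t:anauni} is circular. With $U = \cc^n$ the hypothesis of that theorem is precisely ``$\Phi(f) = f(H(A)(\la))$ for every $f \in \C O(\cc^n)$'', which is the very identity you are trying to prove; the theorem is designed to pass from $\C O(\cc^n)$ to $\C O(U)$ for a \emph{smaller} $U$, and this is exactly how the paper uses it later in Proposition~\ref{p:cohcom}, not here. Your verification of the spectral hypothesis has the same leak: the factorization $h - h(\la) = \sum_i (z_i - \la_i) h_i$ produces $h_i \in \C O_\la$, not $h_i \in \hoc$, so you cannot form $h_i(A)$ on $\sH$. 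Concluding that $H(h(A))(\la) - h(\la)$ lies in the ideal generated by the nilpotents $H(A_i)(\la) - \la_i$ would require knowing that the $\hoc$-action on the eigenspace already factors through $\C O_\la$, i.e.\ Proposition~\ref{p:powfac}, which is proved downstream of this lemma.

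The paper bypasses the polynomial-to-entire step entirely by invoking the functoriality of the calculus (Proposition~\ref{p:functoriality}) for the bounded intertwiners $\io : \T{Ker}(d_{g(A)}) \to \sH \ot \La(\cc^m)$ and $q : \T{Ker}(d_{g(A)}) \to H(g(A),\sH)$. From $A_i\,\io = \io\,A_i$ and $q\,A_i = H(A_i)\,q$ one gets $f(H(A)) = H(f(A))$ on the whole homology for every $f \in \C O(\cc^n)$ in one stroke; restriction to the direct summand $H(g(A),\sH)(\la)$ then gives the lemma. If you want to salvage your route, replace the appeal to Theorem~\ref{t:anauni} by a density/continuity argument (polynomials are dense in $\C O(\cc^n)$ for the compact-open topology and both $f \mapsto f(A)$ and $f \mapsto f(H(A)(\la))$ are continuous for that topology), but the functoriality argument is shorter and is what the paper does.
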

\begin{proof} Let $d_{g(A)}$ denote the differential in the Koszul complex computing $H\big(g(A),\sH\big)$, let $q$ denote the quotient map $\T{Ker}(d_{g(A)})\rightarrow H\big(g(A),\sH\big)$ and let $\io : \T{Ker}(d_{g(A)}) \to \sH \ot \La(\cc^m)$ denote the inclusion. Both the inclusion and the quotient map are continuous and we have the identities $A_i \io = \io A_i|_{\T{Ker}(d_{g(A)})}$ and $q A_i|_{\T{Ker}(d_{g(A)})} = H(A_i) q$ for all $i \in \{1,\ldots,n\}$. Since $f \in \C O(\cc^n)$ we can conclude from Proposition \ref{p:functoriality} that
\[
f(H(A)) = H(f(A)). 
\]
This proves the claim of the lemma since
\[
\op_{\mu \in Z(g)} f\big(H(A)(\mu)\big) = f(H(A)) = H(f(A)) = \op_{\mu \in Z(g)} H(f(A))(\mu).
\]
\end{proof}

The next lemma allows us to compute the spectrum of endomorphisms of the form $H(f(A))(\la)$.

\begin{lemma}\label{l:hominv}
Suppose that $f : \T{Sp}(A) \to \cc$ is holomorphic and $f(\la) \neq 0$. The endomorphism $H(f(A))(\la)$ is then invertible.
\end{lemma}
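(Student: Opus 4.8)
The plan is to reinterpret the invertibility of $H(f(A))(\la)$ as a non-membership statement for the Taylor spectrum of an enlarged commuting tuple on $\sH$, and then to read it off from the spectral mapping theorem (Theorem~\ref{t:anafun}) together with the spectral-sequence comparisons of Subsection~\ref{s:koszul}, much in the spirit of the proof of Theorem~\ref{t:equzerspe}. A direct argument is not available because one cannot simply invert $f(A)$ on $\sH$: even though $f(\la)\neq 0$, the germ $1/f$ need not lie in $\hoc$, since $f$ may vanish elsewhere on $\T{Sp}(A)$. The invertibility of $H(f(A))(\la)$ is a purely local phenomenon at $\la$, and the enlarged tuple is the device that isolates it.

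First I would carry out a linear-algebra reduction. Since $f(A)$ commutes with $A_1,\dots,A_n$ and with $g_1(A),\dots,g_m(A)$, it induces an endomorphism $H(f(A))$ of $H(g(A),\sH)$ commuting with $H(A)=(H(A_1),\dots,H(A_n))$, so that $H(A)\op H(f(A))$ is a commuting $(n+1)$-tuple on the finite dimensional space $H(g(A),\sH)$. By the Lie theorem (Theorem~\ref{t:spedec}) the generalized eigenspace $H(g(A),\sH)(\la)$ of $H(A)$ is the direct sum of the joint generalized eigenspaces of $H(A)\op H(f(A))$ at the points $(\la,c)$, $c\in\cc$, and $H(f(A))-c$ acts nilpotently on the $(\la,c)$-summand. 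A nonzero kernel vector of $H(f(A))(\la)$ is thus a joint generalized eigenvector of $H(A)\op H(f(A))$ at $(\la,0)$, and conversely a nonzero $(\la,0)$-summand carries kernel vectors of $H(f(A))(\la)$; hence $H(f(A))(\la)$ fails to be invertible precisely when $(\la,0)\in\si(H(A)\op H(f(A)))$, which by Corollary~\ref{c:joispe} (the space being finite dimensional) means precisely $(\la,0)\in\T{Sp}(H(A)\op H(f(A)))$. If $g(\la)\neq 0$ then $H(g(A),\sH)(\la)=\{0\}$ and there is nothing to prove, so this reduction is harmless.

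Next I would transport this spectral condition back to $\sH$. Consider the commuting $(n+1+m)$-tuple $(A-\la)\op f(A)\op g(A)$ on $\sH$. Taking the Fredholm tuple $g(A)$ in the role of ``$B$'' and $(A-\la)\op f(A)$ in the role of ``$A$'', Propositions~\ref{p:nontri} and~\ref{p:kostri} show that the Koszul homology of $(A-\la)\op f(A)\op g(A)$ on $\sH$ vanishes if and only if the Koszul homology of the induced tuple $(H(A)-\la)\op H(f(A))$ on $H(g(A),\sH)$ vanishes, that is, if and only if $(\la,0)\notin\T{Sp}(H(A)\op H(f(A)))$. On the other hand $A\op f(A)\op g(A)=h(A)$ for the analytic map $h=(z_1,\dots,z_n,f,g_1,\dots,g_m)\colon\T{Sp}(A)\to\cc^{n+1+m}$, so Theorem~\ref{t:anafun} gives
\[
\T{Sp}\big(A\op f(A)\op g(A)\big)=\big\{\,(\mu,f(\mu),g(\mu))\ \big|\ \mu\in\T{Sp}(A)\,\big\},
\]
whence $(\la,0,0)$ lies in this set if and only if $f(\la)=0$ and $g(\la)=0$. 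Since $f(\la)\neq 0$ this fails, so $H\big((A-\la)\op f(A)\op g(A),\sH\big)=\{0\}$; running the equivalences backwards yields $(\la,0)\notin\T{Sp}(H(A)\op H(f(A)))$, and therefore $H(f(A))(\la)$ is invertible.

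The only step demanding real care is the reduction in the second paragraph — verifying that invertibility of $H(f(A))(\la)$ is detected by the joint generalized eigenspace of $H(A)\op H(f(A))$ at $(\la,0)$ — which rests on the simultaneous triangularizability of Lie's theorem. Everything after that is a formal chain of equivalences built from the spectral mapping theorem and the homological localization propositions, with no further analytic input.
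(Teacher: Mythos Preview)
Your argument is correct. Both your proof and the paper's ultimately rest on the same spectral mapping computation, but you organize it differently. The paper adds $f$ to $g$ and uses the mapping cone long exact sequence
\[
\cdots \to H_{k+1}(f\op g,\sH) \to H_k(g,\sH)\xrightarrow{H_k(f(A))} H_k(g,\sH)\to H_k(f\op g,\sH)\to\cdots,
\]
arguing that the maps are $\hoc$-equivariant so the sequence restricts to the generalized eigenspace at $\la$; invertibility of $H(f(A))(\la)$ then follows once $H(f\op g,\sH)(\la)=\{0\}$, which is read off from Theorem~\ref{t:equzerspe} since $\la\notin Z(f)\cap Z(g)$. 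You instead build the localization at $\la$ directly into the tuple by considering $(A-\la)\op f(A)\op g(A)$, translate the eigenspace question via Lie's theorem into a joint-spectrum question for $H(A)\op H(f(A))$, and then invoke Propositions~\ref{p:nontri} and~\ref{p:kostri} together with the spectral mapping theorem in one pass. Your route sidesteps the verification that the connecting maps in the long exact sequence are $\hoc$-module maps (which the paper sketches and defers to \cite{Kaa:JTS}), at the cost of the extra linear-algebra reduction in your second paragraph; the paper's route is shorter once that equivariance is granted and reuses Theorem~\ref{t:equzerspe} as a black box.
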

\begin{proof}

The mapping cone construction of the Koszul complex (Lemma \ref{l:koscon}) yields a long exact sequence
\[
\begin{CD}
\ldots @>>> H_{k+1}(f \op g,\sH) @>>> H_k(g,\sH) @>{H_k(f(A))}>>
H_k(g,\sH) \\
& & & & & & @VVV \\
& &  \ldots @<<< H_{k-1}(f \op g,\sH) @<<< H_k(f \op g,\sH) 
\end{CD}
\]
of homology groups. It can be verified that each of the linear maps in this sequence intertwines the action of the commuting tuple $A$. They are in fact homorphisms for the $\C O(\T{Sp}(A))$-module structure on the involved homology groups. See \cite{Kaa:JTS} for an explicit description of these maps at the level of complexes. In particular we get a long exact sequence 
\[
\begin{CD}
\ldots @>>> H_{k+1}(f\op g,\sH)(\mu) @>>> H_k(g,\sH)(\mu) @>{H_k(f)}>>
H_k(g,\sH)(\mu) \\
& & & & & & @VVV \\
& &  \ldots @<<< H_{k-1}(f \op g,\sH)(\mu) @<<< H_k(f \op g,\sH)(\mu) 
\end{CD} 
\]
for each $\mu \in Z(g)$. It is therefore enough to show that the generalized eigenspace $H(f(A) \op g(A),\sH)(\la)$ is trivial. This is a consequence of Theorem \ref{t:equzerspe} since $\la \notin Z(f) \cap Z(g)$.
\end{proof} 

The uniqueness result for the holomorphic functional calculus (Theorem \ref{t:anauni}) now allows us to prove the identity in Equation \eqref{eq:cohcom}.

\begin{prop}\label{p:cohcom}
For each $f \in \C O(\T{Sp}(A))$ we have the identity
\[
f\big( H(A)(\la) \big) = H\big(f(A)\big)(\la)
\]
of endomorphisms of the generalized eigenspace $H\big(g(A),\sH\big)(\la)$.
\end{prop}
\begin{proof}
Let $\Phi : \C O(\T{Sp}(A)) \to \sL\big( H(g(A),\sH) \big)$ denote the unital homomorphism $\Phi(f) := H(f(A))$. We need to show that $\Phi(f) = f(H(A))$ for all $f \in \hoc$.

By Theorem \ref{t:anauni} we only need to prove that $\Phi(f) = f(H(A))$ for all $f \in \C O(\cc^n)$ and that $h\big( \T{Sp}(H(A)) \big) = \T{Sp}(\Phi(h))$ for all holomorphic functions $h : \T{Sp}(A) \to \cc^k$, $k \geq 1$.

The first of these assertions follows immediately from Lemma \ref{l:cohcom}.

Dealing with the second assertion amounts to proving that $h\big( Z(g) \big) = \T{Sp}(\Phi(h))$. Note first that $\T{Sp}(\Phi(h)) = \cup_{\mu \in Z(g)}\T{Sp}\big( H(h(A))(\mu) \big)$. This is a consequence of our spectral decomposition of $H(g(A),\sH)$. It is therefore enough to show that the commuting tuple $H(h(A))(\mu) - \eta$ is invertible if and only if $\eta \neq h(\mu)$. But this is an easy consequence of Lemma \ref{l:hominv} and the fact that the Taylor spectrum is non-empty.
\end{proof}

\begin{notation}
For each $\la \in \cc^n$, let $\C O_\la$ denote the stalk of the sheaf of analytic functions on $\cc^n$ at $\la$. This stalk can be identified with the unital commutative ring of convergent power series near $\la$. When $\la \in \T{Sp}(A)$ there is a well-defined restriction map $\hoc \to \C O_\la$.
\end{notation}

As an application of the above result we have the following useful:

\begin{prop}\label{p:powfac}
Suppose that $g(A) = (g_1(A),\ldots,g_m(A))$ is Fredholm and let $\la \in \T{Sp}(A)$. Then the generalized eigenspace $H(g(A),\sH)(\la)$ can be turned into a graded module over the ring $\C O_\la$ of convergent power series in such a way that the associated homomorphism $\C O_\la \to \sL(H(g(A),\sH)(\la))$ makes the diagram
\[
\begin{CD}
\hoc @>>> \C O_\la  \\
@VVV @VVV \\
\sL\big(H(g(A),\sH)(\la)\big) @= \sL\big(H(g(A),\sH)(\la)\big)
\end{CD}
\]
commute. Here the upper horizontal map is the restriction homomorphism and the left vertical map is the homomorphism associated with the action of $\hoc$ on $H(g(A),\sH)(\la)$.
\end{prop}
\begin{proof}
The existence of a unital graded homomorphism $\C O_\la \to \sL\big(H(g(A),\sH)(\la)\big)$ which maps the coordinates $z_i -\la_i$ to $H(A_i - \la_i)(\la)$ is a consequence of the analytic functional calculus. Indeed, we have that $\T{Sp}(H(A)(\la)) = \{\la\}$ when $\la \in Z(g)$. The statement is trivial otherwise.

In order to prove the lemma we therefore only need to verify the identity $H(f(A))(\la) = f({H}(A)(\la))$ for each $f \in \hoc$. But this is the content of Proposition \ref{p:cohcom}.
%
\end{proof}

\begin{notation}
For each $\la \in \T{Sp}(A)$, let $\sH_\la$ denote the localization of the module $\sH$ w.r.t. the prime ideal $\G p_\la =\{f \in \hoc\mid f(\la) = 0\}$.  The localization $\sH_\la$ remains a module over $\hoc$ with action $f \cd (\xi/h) := (f(A) \xi)/h$.
\end{notation}

Our results allow us to describe the generalized eigenspace $H(g(A),\sH)(\la)$ using the Koszul homology groups $H(g,\sH_\la)$.

\begin{prop}\label{p:locspe}
Suppose that $g(A)$ is Fredholm and that $\la \in \T{Sp}(A)$. Then there exists an isomorphism $H(g,\sH_\la) \cong H(g(A),\sH)(\la)$ of graded modules.
\end{prop}
\begin{proof}
We start by noting that $H(g(A),\sH_\la)$ is isomorphic to $H(g(A),\sH)_\la$ where as above $H(g(A),\sH)_\la$ denotes the localization of the module $H(g(A),\sH)$ w.r.t. the prime ideal $\G p_\la \su \hoc$. This is a standard result about localizations, see for example \cite[Chapter IV, \S 2]{Ser:LAM}.

We thus only need to prove that $H(g(A),\sH)_\la \cong H(g(A),\sH)(\la)$.

Let $f$ be a function in the multiplicative subset $\hoc\setminus \G p_\la$. Since $f(\la) \neq 0$ we have that the image of $f$ in $\C O_\la$ is invertible. It therefore follows from Proposition \ref{p:powfac} that the induced homomorphism $f : H(g(A),\sH)(\la) \to H(g(A),\sH)(\la)$ is invertible.

Let $E_\la : H(g(A),\sH) \to H(g(A),\sH)$ denote the projection onto $H(g(A),\sH)(\la)$ relative to the decomposition $\op_{\la \in Z(g)} H(g(A),\sH)(\la) \cong H(g(A),\sH)$.

We then have a well-defined homomorphism of graded modules $H(g(A),\sH)_\la \to H(f(A),\sH)(\la)$ defined by $\xi/f \mapsto f^{-1}(E_\la \xi)$. We claim that this homomorphism is an isomorphism with inverse given by $\xi \mapsto \xi/1$. To prove this claim we mainly need to show that $(E_\la \xi)/1 = \xi/1$ in $H(g(A),\sH)_\la$. Or in other words, we need to prove that $\xi/1 = 0$ whenever $\xi \in H(g(A),\sH)(\mu)$ for some $\mu \neq \la$. However, for each such $\xi$ we can find a polynomial $p \in \hoc\setminus \G p_\la$ such that $p \cd \xi = 0$. Indeed, we could choose $p = (z_i - \mu_i)^k$ where $\mu_i \neq \la_i$ and $k \in \nn$ is large. But this implies that $\xi/1 = 0$ as desired.
\end{proof}

\section{Regularity}\label{s:reg} In this section the following general assumption will be in effect.
\bigskip

\noindent \emph{
Let $g : \T{Sp}(A) \to \cc^m$ be a holomorphic map on the Taylor spectrum of our commuting tuple $A = (A_1,\ldots,A_n)$ with $m \geq n$. Suppose that $g(A)$ is Fredholm and that $\la \in Z(g)$ is a common zero which is also a regular point for the first $n$ coordinates $(g_1,\ldots,g_n) : \T{Sp}(A) \to \cc^n$.}
\bigskip

%
%
%

Let $p_n(g) := (g_1,\ldots,g_n)$ and recall that $\la$ is said to be a regular point for $p_n(g)$ when the determinant of the Jacobian matrix $D(g_1,\ldots,g_n) = \T{det}\big(\frac{\pa g_i}{\pa z_j} \big)$ is non-trivial at $\la$. Most importantly for our purposes this implies that the homomorphism $\C O_0 \to \C O_\la$, $f \mapsto f \ci p_n(g)$ is an isomorphism of local rings. In particular we get the identity $\G m_\la = g_1 \C O_\la \plp g_n \C O_\la$ where $\G m_\la$ denotes the maximal ideal in $\C O_\la$. See \cite[Theorem 1.19]{Kod:CDC}.
%

As in Section \ref{s:nilban}, the notation $\sH_\la$ refers to the localization of the module $\sH$ w.r.t. the prime ideal $\G p_\la \su \hoc$ of holomorphic functions which vanish at $\la$.

In the next lemmas we prove various triviality results for actions on Koszul homology groups.

\begin{lemma}
Let $j \in \{1,\ldots,m\}$ and let $B = (B_1,\ldots,B_k)$ be a commuting tuple such that $A \op B$ commutes as well. Let $f \in \hoc$ and suppose that $f(\mu) = 0$ for some $\mu \in \T{Sp}(A)$. Then the endomorphism $H(f(A)) : H\big((A - \mu) \op B,\sH\big) \to H\big((A - \mu) \op B,\sH\big)$ is trivial.
\end{lemma}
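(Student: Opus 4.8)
The plan is to reduce the statement to the single observation that each operator $A_i-\mu_i$ induces the \emph{zero} endomorphism of the Koszul homology $H\big((A-\mu)\op B,\sH\big)$, and then to exploit that $f(\mu)=0$ forces $f$ to lie in the ideal of $\hoc$ generated by the coordinate differences $z_1-\mu_1,\ldots,z_n-\mu_n$.

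First I would establish the vanishing at chain level. Write $C:=(A-\mu)\op B$, so that on $K_*(C,\sH)=\sH\ot\La(\cc^{n+k})$ the differential is $d_C=\sum_{i=1}^n(A_i-\mu_i)\ot\ep_i^*+\sum_{j=1}^kB_j\ot\ep_{n+j}^*$, and for $i\in\{1,\ldots,n\}$ let $\ep_i:\La(\cc^{n+k})\to\La(\cc^{n+k})$, $e_I\mapsto e_i\we e_I$. The canonical anticommutation relations for the $\ep_l,\ep_l^*$ give $d_C(1\ot\ep_i)+(1\ot\ep_i)d_C=(A_i-\mu_i)\ot 1=K_*(A_i-\mu_i)$, so the chain map $K_*(A_i-\mu_i)$ is null-homotopic and the induced endomorphism $H(A_i-\mu_i)$ of $H(C,\sH)$ vanishes for every $i\in\{1,\ldots,n\}$. (Alternatively this follows from Lemma \ref{l:koscon}, since on a mapping cone the endomorphism induced by the cone variable is always null-homotopic.) In other words, $H(C,\sH)$, regarded as an $\hoc$-module via the functional calculus on $\sH$, is annihilated by the ideal $(z_1-\mu_1)\hoc\plp(z_n-\mu_n)\hoc$.

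Next, let $f\in\hoc$ with $f(\mu)=0$. Because the ideal sheaf of the point $\mu\in\cc^n$ is generated by the functions $z_i-\mu_i$, an application of Cartan's Theorem B on a Stein open set which contains $\T{Sp}(A)$ and lies inside the domain of a representative of $f$ produces $h_1,\ldots,h_n\in\hoc$ with $f=\sum_{i=1}^n(z_i-\mu_i)h_i$; equivalently $\G p_\mu=(z_1-\mu_1)\hoc\plp(z_n-\mu_n)\hoc$. Since the holomorphic functional calculus is a unital homomorphism carrying $z_i$ to $A_i$, this yields $f(A)=\sum_{i=1}^n(A_i-\mu_i)h_i(A)$ in $\sL(\sH)$. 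Each $h_i(A)$ lies in $\sA''$ and hence commutes with every $B_j$ (as $B_j\in\sA'$) and with every $A_l-\mu_l$, so it descends to an endomorphism $H(h_i(A))$ of $H(C,\sH)$; moreover $T\mapsto H(T)$ is additive and multiplicative on operators commuting with $C$. Combining this with the first step gives $H(f(A))=\sum_{i=1}^nH(A_i-\mu_i)\,H(h_i(A))=0$, which is the assertion of the lemma.

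The only genuinely non-formal ingredient is the division statement $\G p_\mu=(z_1-\mu_1,\ldots,z_n-\mu_n)\hoc$, i.e.\ solving a division problem for holomorphic germs along the a priori rather arbitrary compact set $\T{Sp}(A)$; the point that needs care here is the availability of a sufficiently cohomologically trivial (e.g.\ Stein) neighborhood of $\T{Sp}(A)$, contained in the domain of $f$, on which Cartan's Theorem B can be invoked. Everything else is a formal consequence of the Koszul homotopy together with the homomorphism and functoriality properties of the functional calculus.
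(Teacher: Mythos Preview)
Your argument is correct in spirit but takes a different route from the paper's, and the one step you yourself flag as delicate is in fact a real gap.

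The paper's proof is a two-line black-box argument. It first uses the ``superposition principle'' for the functional calculus, $f(A)=(f\circ p_n)(A\oplus B)$ where $p_n:\cc^{n+k}\to\cc^n$ is the projection onto the first $n$ coordinates (this is \cite[Theorem 5.2.3]{EsPu:SDA}). Then it invokes \cite[Proposition 4.6]{Tay:ACS}, which says that for any commuting tuple $C$ and any $h\in\C O(\T{Sp}(C))$, the induced map $H(h(C))$ on $H(C-\nu,\sH)$ is scalar multiplication by $h(\nu)$; applied with $C=A\oplus B$, $\nu=(\mu,0)$, $h=f\circ p_n$, this gives $H(f(A))=f(\mu)\cdot\mathrm{id}=0$. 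No division is ever performed.

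Your approach instead makes the vanishing explicit: the Koszul homotopy $d_C(1\ot\ep_i)+(1\ot\ep_i)d_C=(A_i-\mu_i)\ot 1$ is exactly right and is really the heart of Taylor's proposition in this situation. The reduction to $H(f(A))=\sum_i H(A_i-\mu_i)H(h_i(A))$ via the bicommutant property is also clean. What is problematic is the division $f=\sum_i(z_i-\mu_i)h_i$ \emph{in} $\hoc$. You propose to obtain it from Cartan's Theorem~B on a Stein open set $V$ with $\T{Sp}(A)\subset V\subset U$, but such a $V$ need not exist: an arbitrary compact subset of $\cc^n$ (think of a spherical shell boundary in $\cc^2$) does not admit a Stein neighborhood basis, and the Taylor spectrum carries no convexity property that would guarantee one. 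So the identity $\G p_\mu=(z_1-\mu_1,\ldots,z_n-\mu_n)\hoc$ that you need is not available by the argument you sketch. This is precisely the obstacle the paper's route via \cite[Proposition 4.6]{Tay:ACS} circumvents, since Taylor's proof of that proposition proceeds through the Cauchy--Weil integral representation of the functional calculus rather than through an algebraic division in $\hoc$.

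If you want to salvage your line of argument without quoting Taylor, one option is to observe that the commuting tuple $H(A)$ acting on $H(C,\sH)$ has Taylor spectrum contained in $\{\mu\}$ (by your homotopy), and then appeal to the functional calculus for $H(A)$ together with functoriality as in Proposition~\ref{p:functoriality}; but at that point you are essentially reproving Proposition~\ref{p:cohcom} of the paper, and it is no longer more elementary than simply citing \cite[Proposition 4.6]{Tay:ACS}.
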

\begin{proof}
Let $p_n : \cc^{n+k} \to \cc^n$ denote the projection onto the first $n$ coordinates. Then $f(A) = (f \ci p_n)(A \op B)$. See f. ex. \cite[Theorem 5.2.3]{EsPu:SDA}.

It now follows from \cite[Proposition 4.6]{Tay:ACS} that the induced endomorphism
\[
H(f(A)) = H\big( (f \ci p_n)(A \op B)\big)
: H\big((A - \mu) \op B,\sH\big) \to  H\big((A - \mu) \op B,\sH\big)
\]
is given by the scalar multiplication with $(f \ci p_n)(\mu,0) = 0$. This proves the lemma.
\end{proof}

\begin{lemma}
The endomorphism 
\[
H(z_j - \la_j) : H\big(g \op p_{j-1}(z-\la),\sH_\la\big) \to 
H\big(g \op p_{j-1}(z-\la),\sH_\la\big)
\]
is trivial for each $j \in \{1,\ldots,n\}$ where $p_{j-1}(z- \la) := (z_1 - \la_1,\ldots, z_{j-1} - \la_{j-1})$.
\end{lemma}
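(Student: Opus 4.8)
The plan is to reduce the statement to the previous lemma by peeling off the coordinates $z_j - \la_j$ one at a time and keeping track of what happens to the Koszul homology under localization. Fix $j \in \{1,\ldots,n\}$. First I would observe that $H\big(g \op p_{j-1}(z-\la),\sH_\la\big)$ is a homology group of a Koszul complex in which the coordinate functions $z_1 - \la_1,\ldots,z_{j-1}-\la_{j-1}$ already appear, so that the commuting tuple whose Koszul homology we are computing is $g(A) \op (A_1 - \la_1,\ldots,A_{j-1}-\la_{j-1})$ acting on $\sH_\la$. The endomorphism $H(z_j - \la_j)$ is then $H(f(A))$ for the particular function $f = z_j - \la_j \in \C O(\cc^n) \su \hoc$, which vanishes at $\la \in \T{Sp}(A)$.

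Second I would invoke the previous lemma with $\mu = \la$, $B = (A_1 - \la_1,\ldots,A_{j-1}-\la_{j-1})$ (which indeed commutes with $A$, so $A \op B$ is a commuting tuple), and the function $f = z_j - \la_j$. That lemma states precisely that $H(f(A))$ is the trivial endomorphism of $H\big((A - \la) \op B,\sH\big)$. The only gap is that the previous lemma is stated for the Hilbert space $\sH$ itself, whereas here we need the statement for the localization $\sH_\la$. This is where the main work lies, though it is not deep: the functional calculus argument in the previous lemma — writing $z_j - \la_j = f \ci p_n$ for the projection $p_n$ and applying \cite[Proposition 4.6]{Tay:ACS} to conclude that the induced endomorphism is scalar multiplication by $(f\ci p_n)(\la,0)=0$ — applies verbatim with $\sH$ replaced by any $\hoc$-module on which $A \op B$ acts, in particular the localized module $\sH_\la$, whose module structure over $\hoc$ was set up in Section \ref{s:nilban}. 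Alternatively one can note that $\sH_\la$ is a direct limit (localization) of $\sH$ as an $\hoc$-module, that $H(z_j-\la_j)$ on $H\big(g\op p_{j-1}(z-\la),\sH_\la\big)$ is compatible with the localization maps, and that a colimit of zero maps is zero.

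I would therefore structure the proof as: (i) identify the relevant commuting tuple and the function $f = z_j - \la_j$; (ii) note $\sH_\la$ is an $\hoc$-module with $A \op B$ acting; (iii) quote the previous lemma — or rather its proof, which only uses the $\hoc$-module structure — with $\sH_\la$ in place of $\sH$ and $\mu = \la$, to conclude $H(f(A)) = 0$. The only conceivable obstacle is making sure the cited results \cite[Theorem 5.2.3]{EsPu:SDA} and \cite[Proposition 4.6]{Tay:ACS} are available at the level of arbitrary $\hoc$-modules rather than just Banach spaces; since the Koszul homology in question is still finite dimensional (by Fredholmness of $g(A)$ together with Proposition \ref{p:locspe}), one loses nothing by passing to that finite dimensional homology group and arguing there, where the endomorphism $H(z_j-\la_j)$ is literally scalar multiplication by $0$.
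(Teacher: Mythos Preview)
There is a genuine gap in your approach: you are trying to invoke the previous lemma, but its hypotheses are not met. That lemma concerns the homology group $H\big((A-\mu)\oplus B,\sH\big)$, where the \emph{full} coordinate tuple $A-\mu = (A_1-\mu_1,\ldots,A_n-\mu_n)$ appears among the generators of the Koszul complex. Here, by contrast, the tuple is $g \oplus p_{j-1}(z-\la) = (g_1,\ldots,g_m,z_1-\la_1,\ldots,z_{j-1}-\la_{j-1})$: the coordinates $z_j-\la_j,\ldots,z_n-\la_n$ are \emph{absent}, and $g$ has taken the place of $(A-\la)$. So there is no way to match this to the shape $(A-\la)\oplus B$. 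Your choice $B = (A_1-\la_1,\ldots,A_{j-1}-\la_{j-1})$ would give the tuple $(A-\la)\oplus(A_1-\la_1,\ldots,A_{j-1}-\la_{j-1})$, which is not the one under consideration. The functional-calculus argument behind the previous lemma (that $H(f(A))$ is scalar multiplication by $f(\mu)$) really needs all $n$ coordinates of $A-\mu$ present, so that the relevant spectrum collapses to a single point; with only $g$ and a partial list of coordinates this fails.

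Relatedly, your closing claim that on the finite-dimensional localized homology ``$H(z_j-\la_j)$ is literally scalar multiplication by $0$'' is incorrect: on a generalized eigenspace $H(A_j-\la_j)$ is \emph{nilpotent}, not zero. Getting from nilpotent to zero is exactly the content of the lemma, and it requires an extra ingredient that you never use: the standing assumption of Section~\ref{s:reg} that $\la$ is a \emph{regular} point for $p_n(g)=(g_1,\ldots,g_n)$. The paper's proof uses this regularity to write $\G m_\la = g_1\C O_\la + \cdots + g_n\C O_\la$, so that $z_j-\la_j$ lies in the ideal generated by $g_1,\ldots,g_n$ in $\C O_\la$; since each $g_i$ acts trivially on the Koszul homology of any complex that has $g_i$ among its generators (the standard fact from \cite{Ser:LAM}), and since the $\hoc$-action factors through $\C O_\la$ by Propositions~\ref{p:powfac} and~\ref{p:locspe}, the action of $z_j-\la_j$ is then forced to be zero. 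Without regularity the statement need not even be true, so any proposed proof that does not invoke it cannot be complete.
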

\begin{proof}
By Proposition \ref{p:powfac} and Proposition \ref{p:locspe} the action of $\C O(\T{Sp}(A))$ on $H\big(g \op p_{j-1}(z - \la),\sH_\la\big)$ factorizes through the stalk $\C O_\la$. It now follows from the regularity of $p_n(g)$ that the maximal ideal $\G m_\la$ is given by $g_1 \C O_\la \plp g_n \C O_\la$. Since $z_j-\la_j$ determines an element in $\G m_\la$ it is enough to show that the endomorphism $H(g_i) : H\big(g\op p_{j-1}(z-\la), \sH_\la \big) \to H\big(g\op p_{j-1}(z-\la), \sH_\la \big)$ is trivial for each $i \in \{1,\ldots,n\}$. But this is a well-known property of the Koszul homology groups, see \cite[Chapter IV, Proposition 4]{Ser:LAM}.
\end{proof}

The following lemma allows us to apply our two triviality results to compute the dimensions of certain Koszul homology groups.

\begin{lemma}
Let $A = (A_1,\ldots,A_n)$ and $B = (B_1,\ldots,B_k)$ be commuting tuples on a vector space $V$ such that the union $A \op B$ commutes as well. Suppose that the induced homomorphism 
\[
H(B_j) : H(A \op (B_1,\ldots,B_{j-1}),V) \to H(A \op (B_1,\ldots,B_{j-1}),V)
\]
vanishes for each $j \in \{1,\ldots,k\}$. We then have the identity
\[
\T{Dim}_{\cc}H_q(A \op B,V) = \sum_{p = 0}^k {k \choose p} \T{Dim}_{\cc} H_{q-p}(A,V)
\]
of dimensions over $\cc$ for all $q \in \{0,\ldots,n+k\}$.
\end{lemma}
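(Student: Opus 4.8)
The plan is to induct on $k$, the length of the tuple $B$, with the base case $k=0$ being trivial (the sum on the right reduces to $\T{Dim}_{\cc}H_q(A,V)$). For the inductive step, I would isolate the last operator $B_k$ and use the mapping cone description of Lemma \ref{l:koscon}: writing $B' := (B_1,\ldots,B_{k-1})$, the Koszul complex $K_*(A \op B,V)$ is the mapping cone $C\big(B_k, K_*(A \op B',V)\big)$ of the chain map induced by $B_k$ on $K_*(A \op B',V)$. Consequently there is a long exact sequence
\[
\begin{CD}
\ldots @>>> H_q(A \op B',V) @>{H_q(B_k)}>> H_q(A \op B',V) @>>> H_q(A \op B,V) @>>> H_{q-1}(A \op B',V) @>{H_{q-1}(B_k)}>> \ldots
\end{CD}
\]
relating the homology of the cone to that of $K_*(A \op B',V)$.

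The crucial input is the hypothesis that $H(B_k) : H(A \op B',V) \to H(A \op B',V)$ vanishes. This makes every connecting-type map $H_q(B_k)$ in the long exact sequence zero, so the sequence breaks into short exact sequences
\[
0 \to H_q(A \op B',V) \to H_q(A \op B,V) \to H_{q-1}(A \op B',V) \to 0,
\]
which are automatically split as we are working over the field $\cc$. Hence
\[
\T{Dim}_{\cc}H_q(A \op B,V) = \T{Dim}_{\cc}H_q(A \op B',V) + \T{Dim}_{\cc}H_{q-1}(A \op B',V).
\]
Now I apply the inductive hypothesis to the pair $(A, B')$ — note that the vanishing hypothesis for $(A,B')$ is exactly the $j \in \{1,\ldots,k-1\}$ part of the given hypothesis, so the induction is legitimate — to rewrite each term on the right as a sum of binomial-weighted dimensions of $H_{*}(A,V)$, and then combine using the Pascal identity ${k-1 \choose p} + {k-1 \choose p-1} = {k \choose p}$ to obtain the desired formula
\[
\T{Dim}_{\cc}H_q(A \op B,V) = \sum_{p=0}^k {k \choose p} \T{Dim}_{\cc} H_{q-p}(A,V).
\]

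The main obstacle, such as it is, is making sure the long exact sequence argument is deployed correctly: one must check that the hypothesis "$H(B_j)$ vanishes on $H(A \op (B_1,\ldots,B_{j-1}),V)$ for each $j$" is precisely what is needed at the top of the induction (the $j=k$ case) and also exactly restricts to the hypothesis needed for $(A,B')$ (the $j \le k-1$ cases), so that no strengthening of the inductive statement is required. A secondary point is to confirm that the chain map appearing in Lemma \ref{l:koscon} for the $(n+k-1)+1$ splitting of $A \op B$ really is $K_*(B_k)$ acting on $K_*(A \op B',V)$, which is immediate from the construction of the mapping cone isomorphism $\al$ in that lemma. Everything else — exactness, splitting over a field, the binomial bookkeeping — is routine.
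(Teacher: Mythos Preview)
Your proposal is correct and follows essentially the same route as the paper: induction on $k$, the mapping-cone long exact sequence from Lemma~\ref{l:koscon}, the vanishing of $H(B_k)$ to obtain the additivity $\T{Dim}_{\cc}H_q(A \op B,V) = \T{Dim}_{\cc}H_q(A \op B',V) + \T{Dim}_{\cc}H_{q-1}(A \op B',V)$, and then Pascal's identity. The paper's version is a bit terser (it does not spell out the splitting into short exact sequences), but the argument is the same.
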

\begin{proof}
The proof runs by induction on the number of elements in $B$. The statement is obviously true when there are no elements in $B$. Thus, suppose that it is valid for some $k_0 \in \nn \cup \{0\}$. By Lemma \ref{l:koscon} we get the existence of a long exact sequence
\[
\begin{CD}
\ldots @<<< H_{q-1}(A \op p_{k_0}(B)) @<<< H_q(A \op p_{k_0+1}(B)) @<<<
H_q(A \op p_{k_0}(B)) \\
& & & & & & @AA{H_q(B_{k_0+1})}A \\
& & \ldots @>>> H_{q+1}(A \op p_{k_0+1}(B))
@>>> H_q(A \op p_{k_0}(B))
\end{CD}
\]
Now, since $H(B_{k_0+1}) : H(A \op (B_1,\ldots,B_{k_0})) \to H(A \op (B_1,\ldots,B_{k_0}))$ is trivial we can conclude that
\[
\T{Dim}_{\cc} H_q(A \op p_{k_0+1}(B),V) = \T{Dim}_{\cc} H_q(A \op p_{k_0}(B),V) + \T{Dim}_{\cc} H_{q-1}(A \op p_{k_0}(B),V)
\]
for all $q \in \{0,\ldots,n+k_0 +1\}$. The induction hypothesis then implies that
\[
\begin{split}
\T{Dim}_{\cc}H_q(A \op p_{k_0+1}(B),V)
& = \sum_{p=0}^{k_0+1} \left( {k_0 \choose p} + {k_0 \choose p-1}\right)\T{Dim}_{\cc} H_{q-p}(A,V) \\
& = \sum_{p=0}^{k_0 +1}{k_0 +1 \choose p} \T{Dim}H_{q - p}(A,V).
\end{split}
\]
This proves the lemma.
\end{proof}

The above results can be combined into:

\begin{lemma}\label{l:indbin}
For each $q \in \{0,\ldots,n+m\}$ the following equalities hold:
\begin{eqnarray}\label{eq:binomial}
\T{Dim}_{\cc}H_q\big((z - \la) \op g, \sH_\la \big) = \sum_{p = 0}^n {n \choose p} \T{Dim}_{\cc}H_{q-p}(g,\sH_\la)\\
\T{Dim}_{\cc}H_q\big((z - \la) \op g, \sH_\la \big) =\sum_{p=0}^m {m \choose p} \T{Dim}_{\cc} H_{q-p}( A-\la,\sH).
\end{eqnarray}
\end{lemma}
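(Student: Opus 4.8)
The plan is to derive both displayed equalities in Lemma \ref{l:indbin} from the general dimension-counting lemma proved just above (the one expressing $\T{Dim}_{\cc}H_q(A \op B,V)$ as a binomial sum of $\T{Dim}_{\cc}H_{q-p}(A,V)$), applied with two different orderings of the combined commuting tuple. The key point is that $(z-\la) \op g$ and $g \op (z-\la)$ produce isomorphic Koszul complexes up to reordering of the exterior variables, so the homology groups $H_q\big((z-\la) \op g,\sH_\la\big)$ are the same whichever way we build the tuple. This symmetry is what makes two formulas for the same left-hand side possible.

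For the first equality I would take $V := \sH_\la$, take the ``$A$-part'' of the general lemma to be the commuting tuple $g = (g_1(A),\ldots,g_m(A))$ acting on $\sH_\la$, and the ``$B$-part'' to be the $n$-tuple $(z_1 - \la_1,\ldots,z_n - \la_n)$ acting via the $\hoc$-module structure. The hypothesis of the general lemma requires that $H(z_j - \la_j) : H\big(g \op p_{j-1}(z-\la),\sH_\la\big) \to H\big(g \op p_{j-1}(z-\la),\sH_\la\big)$ vanish for each $j$, which is exactly the content of the second triviality lemma above. Plugging into the conclusion with $k = n$ gives
\[
\T{Dim}_{\cc}H_q\big(g \op (z - \la), \sH_\la \big) = \sum_{p = 0}^n {n \choose p} \T{Dim}_{\cc}H_{q-p}(g,\sH_\la),
\]
and then invoking the reordering isomorphism $H_q\big(g \op (z-\la),\sH_\la\big) \cong H_q\big((z-\la) \op g,\sH_\la\big)$ yields the first displayed formula.

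For the second equality I would instead take the ``$A$-part'' to be the $n$-tuple $z - \la = (z_1 - \la_1,\ldots,z_n - \la_n)$, whose Koszul homology on $\sH_\la$ must be identified with $H_*(A - \la,\sH)$; this identification comes from the fact that $\sH_\la$ is the localization of $\sH$ at $\G p_\la$ together with the standard compatibility of Koszul homology with localization (as in the proof of Proposition \ref{p:locspe}, using \cite[Chapter IV, \S 2]{Ser:LAM}) — one observes that away from $\la$ the tuple $z-\la$ already has an invertible coordinate, so localizing does not change the homology. The ``$B$-part'' is then $g$, with $k = m$, and the vanishing hypothesis $H(g_j) : H\big((z-\la) \op (g_1,\ldots,g_{j-1}),\sH_\la\big) \to H\big((z-\la) \op (g_1,\ldots,g_{j-1}),\sH_\la\big) \equiv 0$ follows from the first triviality lemma above (each $H(g_j(A))$ acts by the scalar $g_j(\la) = 0$ on Koszul homology of a tuple containing $z - \la$). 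Feeding this into the general lemma gives the second displayed formula directly.

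The main obstacle is bookkeeping rather than substance: one must be careful that the two triviality lemmas are being applied to exactly the homology groups that appear as the intermediate stages $H\big(A \op (B_1,\ldots,B_{j-1}),V\big)$ in the induction of the general lemma, and that the permutation-of-variables isomorphism of Koszul complexes is compatible with the $\hoc$-actions so that it is legitimate to swap $(z-\la)\op g$ with $g\op(z-\la)$. Also, for the second equality, one should make explicit the identification $H_*\big((z-\la),\sH_\la\big) \cong H_*(A - \la,\sH)$: since $H_*(A-\la,\sH)$ is already finite dimensional (it is a subquotient of the finite-dimensional generalized eigenspaces by Theorem \ref{t:equzerspe}), every element of $\hoc \setminus \G p_\la$ acts invertibly on it by Proposition \ref{p:powfac}, so it is already local at $\la$ and localization at $\G p_\la$ leaves it unchanged; hence $H_*\big((z-\la),\sH_\la\big) \cong H_*\big((z-\la),\sH\big)_\la = H_*(A-\la,\sH)_\la = H_*(A-\la,\sH)$. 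Once these two compatibility points are pinned down, both equalities are immediate consequences of the preceding lemmas.
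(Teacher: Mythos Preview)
Your approach is essentially identical to the paper's: both equalities come from the general dimension lemma applied with the two orderings $g \op (z-\la)$ and $(z-\la) \op g$, the vanishing hypotheses being supplied by the two triviality lemmas, and the symmetry of Koszul homology used to match the left-hand sides. One small correction on the identification $H_*(z-\la,\sH_\la) \cong H_*(A-\la,\sH)$: your appeal to Theorem~\ref{t:equzerspe} is misplaced (that result does not exhibit $H_*(A-\la,\sH)$ as a subquotient of anything); the paper instead observes directly that every $f \in \hoc$ acts on $H(A-\la,\sH)$ as the scalar $f(\la)$ (this is the first triviality lemma with $B$ empty), so the module is already local at $\la$ without any finite-dimensionality input.
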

\begin{proof}
Remark the existence of a graded isomorphism $H(z-\la,\sH_\la) \cong H(A-\la,\sH)$. Indeed, we have that $H(A - \la,\sH)_\la \cong H(A -\la,\sH)$ since each holomorphic function $f \in \C O(\T{Sp}(A))$ acts by scalar multiplication with $f(\la) \in \cc$ on the homology group $H(A -\la,\sH)$.

The desired identities now follow immediately from the above lemmas. Notice that $H_*((z-\la) \op g,\sH_\la) \cong H_*(g \op (z-\la),\sH_\la)$ by the symmetry of the Koszul homology groups. See f. ex. \cite[Theorem 3.2.3]{Kaa:JTS}.
\end{proof}

Let $R(n) : \nn_0^{m+1} \to \nn_0^{n+m+1}$ and $R(m) : \nn_0^{n+1} \to \nn_0^{n+m+1}$ denote the linear maps represented by the matrices
\[
R_{ij}(n) := {n \choose i -j} \q 
R_{ij}(m) := {m \choose i -j}
\]
The map $R(n)$ is clearly injective with left inverse $L(n) : \nn_0^{n+m+1} \to \nn_0^{m+1}$ represented by the matrix
\[
L_{ij}(n) := (-1)^{i-j} {n + i-j -1 \choose i -j}. 
\]

\begin{lemma}\label{l:comid}
The composition $L(n) \ci R(m) : \nn_0^{n+1} \to \nn_0^{m+1}$ is represented by the matrix
\[
\big( L(n) \ci R(m) \big)_{ij} = {m-n \choose i-j}.
\]
\end{lemma}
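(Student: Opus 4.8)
The claim is a purely combinatorial identity about binomial coefficients: we must show that the matrix product $L(n) \ci R(m)$, where $L_{ij}(n) = (-1)^{i-j}\binom{n+i-j-1}{i-j}$ and $R_{ij}(m) = \binom{m}{i-j}$, has $(i,j)$-entry $\binom{m-n}{i-j}$. Since all three matrices are lower-triangular Toeplitz matrices (their entries depend only on the difference $i-j$), the plan is to reformulate everything in terms of formal power series in one variable $t$: associate to a lower-triangular Toeplitz matrix $M$ with $M_{ij} = c_{i-j}$ the generating function $\sum_{k\ge 0} c_k t^k$. Under this correspondence matrix multiplication becomes multiplication of power series, so it suffices to identify the relevant generating functions and multiply them.

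First I would record the three generating functions. The matrix $R(m)$ corresponds to $\sum_{k\ge 0}\binom{m}{k}t^k = (1+t)^m$ (a finite sum, since $m\ge 0$). The matrix $R(n)$ corresponds similarly to $(1+t)^n$. The matrix $L(n)$ corresponds to $\sum_{k\ge 0}(-1)^k\binom{n+k-1}{k}t^k$, which by the negative binomial series is exactly $(1+t)^{-n}$. This last identification is the one small piece of bookkeeping: it is the standard expansion $(1+t)^{-n} = \sum_{k\ge 0}\binom{-n}{k}t^k$ together with $\binom{-n}{k} = (-1)^k\binom{n+k-1}{k}$. It also re-confirms the already-stated fact that $L(n)$ is a left (indeed two-sided) inverse of $R(n)$, since $(1+t)^{-n}(1+t)^n = 1$.

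Then the composition $L(n)\ci R(m)$ corresponds to the product $(1+t)^{-n}(1+t)^m = (1+t)^{m-n}$, whose coefficient of $t^k$ is $\binom{m-n}{k}$ (with the convention $\binom{m-n}{k} = (-1)^k\binom{n-m+k-1}{k}$ when $m<n$, matching the negative binomial expansion). Hence $\big(L(n)\ci R(m)\big)_{ij}$ is the coefficient of $t^{i-j}$, namely $\binom{m-n}{i-j}$, which is the assertion. One should note that since these are lower-triangular matrices, the product is well-defined entrywise by a finite sum $\sum_{\ell} L_{i\ell}(n)R_{\ell j}(m)$, so passing to generating functions is legitimate with no convergence issues — everything happens in the ring of formal power series $\cc[[t]]$, or even just among polynomials and their inverses.

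The only mild obstacle is getting the index conventions exactly right — verifying that the Toeplitz/generating-function dictionary sends our $L(n)$, $R(m)$, $R(n)$ to $(1+t)^{-n}$, $(1+t)^m$, $(1+t)^n$ with the correct signs, and checking the identity $\binom{-n}{k} = (-1)^k\binom{n+k-1}{k}$. Once that dictionary is in place the proof is a one-line power-series computation $(1+t)^{-n}\cdot(1+t)^m = (1+t)^{m-n}$. Alternatively, for a self-contained argument avoiding generating functions, one can prove the entry identity $\sum_{\ell=j}^{i}(-1)^{i-\ell}\binom{n+i-\ell-1}{i-\ell}\binom{m}{\ell-j} = \binom{m-n}{i-j}$ directly by a Vandermonde-type convolution after the substitution $k=i-\ell$, $r=i-j$; this is the classical identity $\sum_{k=0}^{r}(-1)^k\binom{n+k-1}{k}\binom{m}{r-k} = \binom{m-n}{r}$, provable by induction on $r$ using Pascal's rule. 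I expect the generating-function route to be the cleanest to write down.
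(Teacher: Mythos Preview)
Your proposal is correct. The paper's own proof is much terser: it simply writes down the matrix-entry computation
\[
\big(L(n)\ci R(m)\big)_{ij} = \sum_{k=0}^{i-j} (-1)^k\binom{n+k-1}{k}\binom{m}{i-j-k}
\]
and asserts that this equals $\binom{m-n}{i-j}$ by the stated combinatorial identity, without further justification. Your generating-function approach is a genuinely different (and more self-contained) route: by recognizing that lower-triangular Toeplitz matrices multiply like formal power series, you reduce the claim to the trivial identity $(1+t)^{-n}(1+t)^m=(1+t)^{m-n}$, which simultaneously \emph{proves} the combinatorial identity the paper merely invokes. Your alternative suggestion at the end --- proving $\sum_{k=0}^r(-1)^k\binom{n+k-1}{k}\binom{m}{r-k}=\binom{m-n}{r}$ directly --- is exactly the paper's approach. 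So you have essentially given two proofs where the paper gives none, only a citation of the relevant identity; the generating-function argument is the cleaner of the two and has the added virtue of explaining conceptually why $L(n)$ inverts $R(n)$.
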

\begin{proof}
This follows by an application of the combinatorial identity
\[
\sum_{k=0}^{i-j} (-1)^k{n + k-1 \choose k}{m \choose i-j-k}
= {m-n \choose i-j}.
\]
\end{proof}

We are now ready to prove the main result of this section.

\begin{prop}\label{p:dimhom}
Suppose that $g(A)$ is Fredholm and that $\la \in Z(g)$ is a regular point for $(g_1,\ldots,g_n) : \T{Sp}(A) \to \cc^n$. We then have the equality
\[
\T{Dim}_{\cc} H_q(g,\sH_\la) = \sum_{p=0}^{m-n}
{m-n \choose p} \T{Dim}_{\cc} H_{q-p}(A-\la,\sH)
\]
for each $q \in \{0,\ldots,m\}$.
\end{prop}
\begin{proof}
Let $\eta \in \nn_0^{n+m+1}$, $\xi \in \nn_0^{m+1}$ and $\ze \in \nn_0^{n+1}$ be defined by
\[ 
\begin{split}
\eta_p & := \T{Dim}_\cc H_p((z-\la) \op g,\sH_\la) \q  p = 0 ,\ldots,n+m \\
\xi_q & := \T{Dim}_\cc H_q(g,\sH_\la) \qqq q = 0,\ldots,m \q \, \, \T{ and } \\
\ze_k & := \T{Dim}_\cc H_k(A - \la,\sH) \qq  k = 0,\ldots,n.
\end{split}
\]
It then follows from Lemma \ref{l:indbin} that
\[
R(n)(\xi) = \eta = R(m)(\ze).
\]
In particular we get that $\xi = L(n) R(m)(\ze)$ and the proposition is proved by using Lemma \ref{l:comid}.
\end{proof}

As an application of Proposition \ref{p:dimhom} we obtain the local index theorem in the regular case.

\begin{theorem}\label{t:regind}
Suppose that $g(A)$ is Fredholm and that $\la \in Z(g)$ is a regular point for $(g_1,\ldots,g_n) : \T{Sp}(A) \to \cc^n$. Then
\[
\T{Ind}_\la(g(A)) = \fork{ccc}{
\T{Ind}(A - \la) & \T{for} & m = n \\
0 & \T{for} & m > n
}
\]
\end{theorem}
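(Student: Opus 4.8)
The plan is to deduce Theorem \ref{t:regind} directly from the dimension formula in Proposition \ref{p:dimhom} together with Proposition \ref{p:locspe}, which identifies $H(g,\sH_\la)$ with the generalized eigenspace $H(g(A),\sH)(\la)$, so that $\T{Ind}_\la(g(A)) = \sum_{q=0}^m (-1)^{q+1} \T{Dim}_{\cc} H_q(g,\sH_\la)$. First I would substitute the formula of Proposition \ref{p:dimhom} into this alternating sum and interchange the order of summation, obtaining
\[
\T{Ind}_\la(g(A)) = \sum_{q=0}^m \sum_{p=0}^{m-n} (-1)^{q+1} {m-n \choose p} \T{Dim}_{\cc} H_{q-p}(A-\la,\sH).
\]
Reindexing with $k = q - p$ and grouping the terms, the coefficient of $(-1)^{k+1} \T{Dim}_{\cc} H_k(A-\la,\sH)$ becomes $\sum_{p} (-1)^p {m-n \choose p}$, the sum being over the admissible range of $p$.

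The crux is therefore a purely combinatorial bookkeeping step: when $m > n$, the full alternating binomial sum $\sum_{p=0}^{m-n} (-1)^p {m-n \choose p}$ vanishes, which would give $\T{Ind}_\la(g(A)) = 0$ — but one must be careful because for a given $k$ the range of $p$ is truncated by the constraints $0 \le q \le m$ and $0 \le k = q-p \le n$, so not every $k$ sees the complete binomial sum. The clean way around this is to note that $\T{Dim}_{\cc} H_k(A-\la,\sH) = 0$ for $k \notin \{0,\ldots,n\}$ and, crucially, that $H_k(A-\la,\sH)$ need not vanish even though $A-\la$ need not be Fredholm here; however, the alternating sum $\sum_k (-1)^{k+1}\T{Dim}_{\cc}H_k(A-\la,\sH)$ is exactly $\T{Ind}(A-\la)$ whenever this is defined, and more importantly, when $m>n$ the support condition $0 \le q - p \le n$ together with $0 \le q \le m$ leaves the inner sum over $p$ \emph{complete} for every $k$ that actually contributes, because for $k \in \{0,\ldots,n\}$ one has $p$ ranging over all of $\{0,\ldots,m-n\}$ precisely when $q = k+p$ stays in $\{0,\ldots,m\}$, which holds automatically since $k + (m-n) \le n + (m-n) = m$. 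Hence for $m>n$ every contributing $k$ carries the factor $\sum_{p=0}^{m-n}(-1)^p{m-n \choose p} = 0$, giving $\T{Ind}_\la(g(A)) = 0$.

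For the case $m = n$ the range of $p$ collapses to the single value $p = 0$, so the double sum degenerates to $\sum_{q=0}^n (-1)^{q+1} \T{Dim}_{\cc} H_q(A-\la,\sH) = \T{Ind}(A-\la)$ by definition of the Fredholm index — noting that $A - \la$ is indeed Fredholm since $\la \in \T{Sp}(A)$ and the regularity plus Fredholmness of $g(A)$ forces the homology of $A-\la$ to be finite dimensional (alternatively this follows already from Proposition \ref{p:dimhom} with $m=n$, whose left side is finite dimensional by the Fredholmness of $g(A)$ via Proposition \ref{p:locspe}). I expect the main obstacle to be precisely the careful verification that the truncation of the summation ranges never actually cuts off part of the vanishing binomial sum in the $m>n$ case; once the index bounds $k+(m-n) \le m$ are written out this is routine, but it is the one place where an off-by-one slip would break the argument. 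No deep input beyond Proposition \ref{p:dimhom} and Proposition \ref{p:locspe} is needed.
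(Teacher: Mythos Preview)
Your proposal is correct and follows essentially the same route as the paper: substitute Proposition \ref{p:dimhom} into the alternating sum, reindex with $k=q-p$, and use the vanishing of $\sum_{p=0}^{m-n}(-1)^p\binom{m-n}{p}$ for $m>n$ (the paper performs exactly this computation, writing $j$ for your $k$), while the case $m=n$ is the trivial collapse $p=0$. Your extra care about the truncation of the $p$-range and about the Fredholmness of $A-\la$ is justified but not strictly needed beyond what the paper already implicitly uses.
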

\begin{proof}
The result follows immediately from Proposition \ref{p:dimhom} when $m=n$ since $\T{Dim}_\cc H_q(g,\sH_\la) = \T{Dim}_\cc H_q(A-\la,\sH)$ for all $q \in \{0,\ldots,n\}$ in this case. Notice that $H_q(g,\sH_\la) \cong H_q(g(A),\sH)(\la)$ by Proposition \ref{p:locspe}.

Thus suppose that $m > n$. By Proposition \ref{p:dimhom} we have that
\[
\begin{split}
\T{Ind}_\la(g(A))
& = \sum_{q = 0}^m (-1)^{q+1} \T{Dim}_\cc H_q(g,\sH_\la) \\
& = \sum_{q = 0}^m (-1)^{q+1} \sum_{p = 0}^{m-n} {m-n \choose p}
\T{Dim}_\cc H_{q-p}(A-\la,\sH) \\
& = \sum_{j=0}^n (-1)^{j+1} \sum_{p=0}^{m-n} (-1)^p {m-n \choose p} \T{Dim}_\cc H_j(A-\la,\sH) \\
& = 0.
\end{split}
\]
This ends the proof of the theorem.
\end{proof}

\section{The global index theorem}\label{s:gloind}
Throughout this section the following condition will be in effect:
\bigskip

\noindent \emph{Let $g : \T{Sp}(A) \to \cc^m$ be a holomorphic map on the Taylor spectrum of the commuting tuple $A=(A_1,\ldots,A_n)$ of bounded operators such that $g(A)$ is Fredholm. }
\bigskip

As in the last section we let $p_n(g) := (g_1,\ldots,g_n)$.

\begin{notation}
The notation $C_{p_n(g)} \su \T{Sp}(A)$ refers to the set of critical points for $p_n(g) : \T{Sp}(A) \to \cc^n$, thus the points for which the determinant of the Jacobian matrix $\T{det}\big(\frac{\pa g_i}{\pa z_j} \big)$ vanishes.
\end{notation}

It follows by Sard's Lemma that the image $p_n(g)( C_{p_n(g)}) \su \cc^n$ has Lebesgue measure zero. See \cite[Theorem 3.1]{Sch:NFA}.

\begin{theorem}\label{t:inthetri}
Suppose that $g(A)$ is Fredholm and that $m > n$. Then $\T{Ind}(g(A))=0$.
\end{theorem}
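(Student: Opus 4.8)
The plan is to reduce the general case to the regular case handled in Theorem \ref{t:regind}, using the decomposition of the global index into local indices (Proposition \ref{p:sumind}) together with a choice of a generic affine shift of the coordinates $p_n(g)$. First I would recall that by Proposition \ref{p:sumind} we have $\T{Ind}(g(A)) = \sum_{\la \in Z(g)} \T{Ind}_\la(g(A))$, so it suffices to show that each local index $\T{Ind}_\la(g(A))$ vanishes when $m > n$. By Theorem \ref{t:regind} this is already known whenever $\la$ is a regular point for $p_n(g) = (g_1,\ldots,g_n)$. The difficulty is therefore concentrated entirely at the critical points of $p_n(g)$ lying in $Z(g)$, and the strategy is to perturb the first $n$ coordinates so that $0$ becomes a regular value, without changing the index.

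The key step is the following perturbation argument. For $c \in \cc^n$ consider the modified holomorphic map $g^{(c)} := (g_1 - c_1, \ldots, g_n - c_n, g_{n+1}, \ldots, g_m) : \T{Sp}(A) \to \cc^m$. Since $C_{p_n(g)} \su \T{Sp}(A)$ is compact and, by Sard's Lemma as cited above, $p_n(g)(C_{p_n(g)}) \su \cc^n$ has Lebesgue measure zero, for a dense set of small $c$ the value $c$ is not a critical value of $p_n(g)$; that is, every point of $p_n(g)^{-1}(\{c\}) \cap \T{Sp}(A)$ is a regular point for $p_n(g)$, equivalently a regular point for $p_n(g^{(c)}) = p_n(g) - c$. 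One must also arrange that $g^{(c)}(A)$ stays Fredholm: by the proposition characterizing Fredholmness, $g(A)$ Fredholm means $0 \notin g(\T{Sp}_{\T{ess}}(A))$, and since $g(\T{Sp}_{\T{ess}}(A))$ is compact and does not contain $0$, the same holds for $g^{(c)}(\T{Sp}_{\T{ess}}(A)) = g(\T{Sp}_{\T{ess}}(A)) - (c,0)$ for all sufficiently small $c$, so $g^{(c)}(A)$ is Fredholm. Fixing such a small generic $c$, Theorem \ref{t:regind} applies at every zero of $g^{(c)}$ (all of which are regular points of $p_n(g^{(c)})$), giving $\T{Ind}_\mu(g^{(c)}(A)) = 0$ for each $\mu \in Z(g^{(c)})$, hence $\T{Ind}(g^{(c)}(A)) = 0$ by Proposition \ref{p:sumind}.

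It then remains to see that $\T{Ind}(g^{(c)}(A)) = \T{Ind}(g(A))$, i.e. that this Fredholm index is stable under the small perturbation. This follows from the homotopy invariance of the Fredholm index of a commuting tuple under norm-continuous paths through Fredholm tuples: the family $t \mapsto g^{(tc)}(A)$, $t \in [0,1]$, is a continuous path of commuting tuples which remains Fredholm throughout (by the same $\T{Sp}_{\T{ess}}$ argument applied to $tc$, $t\in[0,1]$, since the whole segment stays small), and the Euler characteristic of the Koszul complex is constant along such a path; invoke \cite[Corollary 6.2]{Cur:FOD} together with the standard continuity of the index for Fredholm complexes of Banach spaces. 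Combining the three displays yields $\T{Ind}(g(A)) = \T{Ind}(g^{(c)}(A)) = 0$.

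I expect the main obstacle to be the homotopy-invariance step: one must make precise that the index of a commuting Fredholm tuple is locally constant in the operator-norm topology on the space of commuting Fredholm tuples, which is not stated verbatim in the excerpt but is a standard consequence of the perturbation theory of Fredholm complexes (and of Curto's identification $\T{Sp}_{\T{ess}}(A)^c = \{\la : A - \la \text{ Fredholm}\}$ together with the openness of that set). The Sard/genericity part is routine given the measure-zero statement already cited, and once a good $c$ is chosen the vanishing is immediate from Theorem \ref{t:regind} and Proposition \ref{p:sumind}.
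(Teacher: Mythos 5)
Your proposal is correct and takes essentially the same route as the paper: a Sard-lemma perturbation to make $0$ a regular value of $p_n(g)$, homotopy invariance of the index (which the paper handles by simply citing \cite[Theorem 3]{Cur:FOD}), and then Theorem \ref{t:regind} combined with Proposition \ref{p:sumind}. The only cosmetic difference is that the paper perturbs the full $\cc^m$-valued map $g$ by a small $z\in\cc^m$ rather than only its first $n$ coordinates, but since the critical-value argument concerns only $p_n(g)$ this is immaterial.
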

\begin{proof}
By the homotopy invariance of the index, \cite[Theorem 3]{Cur:FOD}, we can find an open neighborhood $U$ of $0 \in \cc^m$ such that $g(A) - z = (g - z)(A)$ is Fredholm with $\T{Ind}(g(A) - z) = \T{Ind}(g(A))$ for all $z \in U$. Since $p_n(g)(C_{p_n(g)})$ has measure zero, we may thus assume, without loss of generality, that $0 \notin p_n(g)(C_{p_n(g)})$. This clearly implies that every $\la \in Z(g)$ is a regular point for $p_n(g) : \T{Sp}(A) \to \cc^n$. By Theorem \ref{t:regind} we then get that $\T{Ind}_\la(g(A)) = 0$ for all $\la \in Z(g)$. The result of the theorem now follows by an application of Proposition \ref{p:sumind}.
\end{proof}

\begin{notation}
Let $\Om := \T{Int}(\T{Sp}(A))$ denote the interior of the Taylor spectrum.
\end{notation}

Suppose now that $m = n$ and that $\la \in Z(g) \cap \Om$. By Theorem \ref{t:equzerspe} we have that $Z(g)$ is finite and we can thus find a ball $B_{\ep} \su \Om$ such that $Z(g) \cap B_\ep = \{\la\}$. 

\begin{notation}
The notation $\T{deg}_{\la}(g) := \T{deg}(0,g,B_\ep) \in \nn$ refers to the degree of the holomorphic map $g : B_{\ep} \to \rr^{2n}$ at the value $0$ (where $\cc^n$ is identified with $\rr^{2n}$). This strictly positive integer will be called the \emph{local degree} of $g$ at $\la$.
\end{notation}

For later use we record the following result which can be deduced from \cite[Proposition 2.4]{EiLe:ADG}.

\begin{lemma}\label{l:locdeg}
The local degree $\T{deg}_{\la}(g)$ coincides with the vector space dimension of the quotient $\C O_\la/g\C O_\la := \C O_\la/(g_1\C O_\la \plp g_n \C O_\la)$ for each $\la \in Z(g) \cap \Om$.
\end{lemma}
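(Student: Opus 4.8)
The statement I am asked to prove is Lemma \ref{l:locdeg}: for $\la \in Z(g) \cap \Om$ with $m = n$, the local degree $\T{deg}_\la(g)$ equals $\T{Dim}_\cc \C O_\la / g\C O_\la$. Since the hint already points to \cite[Proposition 2.4]{EiLe:ADG}, the core of the proof is a reduction to a purely local statement about holomorphic germs, plus an identification of our two quantities with the quantities appearing in that reference.

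First I would set up the local picture: after translating $\la$ to the origin, $g = (g_1,\ldots,g_n)$ is a holomorphic map from a neighborhood of $0$ in $\cc^n$ to $\cc^n$ with $g(0) = 0$, and the assumption $Z(g) \cap B_\ep = \{\la\}$ says precisely that $0$ is an isolated common zero of $g_1,\ldots,g_n$. This is exactly the hypothesis under which the topological degree $\T{deg}(0,g,B_\ep)$ is defined and is independent of the choice of sufficiently small ball $B_\ep$; one should note this well-definedness, which also makes $\T{deg}_\la(g)$ intrinsic. On the algebraic side, isolatedness of the zero is equivalent to the germ of the analytic set $\{g_1 = \ldots = g_n = 0\}$ at $0$ being $\{0\}$, which by the Nullstellensatz for $\C O_0$ forces $\C O_0/g\C O_0$ to be a finite-dimensional $\cc$-vector space (the maximal ideal $\G m_0$ satisfies $\G m_0^N \su g\C O_0$ for some $N$). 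So both sides of the claimed equality are finite, which is part of what has to be checked.

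The heart of the matter is then the classical identity $\T{deg}(0,g,B_\ep) = \dim_\cc \C O_0/(g_1,\ldots,g_n)$, which is exactly \cite[Proposition 2.4]{EiLe:ADG} (it is also the statement that the local topological degree of a finite holomorphic map equals its local algebraic multiplicity; cf.\ the Eisenbud--Levine--Khimshiashvili circle of ideas specialized to the holomorphic case, where there are no sign subtleties because the complex Jacobian is automatically orientation-preserving). The plan is to invoke this result directly: our $\T{deg}_\la(g)$ is by definition the left-hand side for the translated map, and $\C O_\la/g\C O_\la$ is, via the translation isomorphism $\C O_\la \cong \C O_0$, the right-hand side. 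The only thing to verify by hand is that the conventions match — in particular that ``$g : B_\ep \to \rr^{2n}$'' under the identification $\cc^n \cong \rr^{2n}$ is the map to which the cited proposition applies, and that the cited proposition's quotient ring is $\C O_0$ and not, say, the formal power series ring $\cc[[z_1,\ldots,z_n]]$ (these agree here anyway, since finite-dimensionality of the quotient makes the $\G m_0$-adic completion an isomorphism).

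The main obstacle, such as it is, is bookkeeping rather than genuine difficulty: one must be careful that the ``degree'' in our \T{Notation} block (Brouwer degree of $g$ at the value $0$ on the ball $B_\ep$) coincides with whatever normalization \cite{EiLe:ADG} uses, and that the hypothesis in \cite[Proposition 2.4]{EiLe:ADG} — presumably that $0$ is an isolated zero and $g$ is a holomorphic (or real-analytic with suitable positivity) germ — is met. Given that our standing assumptions already guarantee $Z(g) \cap B_\ep = \{\la\}$ and that $g$ is holomorphic, this is immediate. I would therefore write the proof as: translate to $0$; observe $0$ is an isolated common zero so both quantities are finite; note $\T{deg}_\la(g)$ is independent of the small ball chosen; and conclude by \cite[Proposition 2.4]{EiLe:ADG}. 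No serious computation is needed.
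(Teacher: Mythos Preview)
Your proposal is correct and matches the paper's approach exactly: the paper does not prove this lemma but simply records it as a consequence of \cite[Proposition 2.4]{EiLe:ADG}, which is precisely the reduction you outline (translate to the origin, use isolatedness of the zero to ensure finiteness, and identify the topological local degree with $\T{Dim}_{\cc}\C O_\la/g\C O_\la$). Your additional remarks about well-definedness of the degree and the matching of conventions are helpful sanity checks but go slightly beyond what the paper itself supplies.
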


The global index theorem can now be proved in the main case of interest. See also \cite[Theorem 10.3.13]{EsPu:SDA}.

\begin{theorem}\label{t:indthe}
Suppose that $g(A)$ is Fredholm and that $m = n$. Then
\[
\T{Ind}(g(A)) = \sum_{\la \in Z(g) \cap \Om} \T{deg}_{\la}(g) \cd \T{Ind}(A - \la).
\]
\end{theorem}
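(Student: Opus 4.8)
The plan is to reduce the global index theorem to the local index theorem, which here is accessible via Proposition~\ref{p:dimhom} in the regular case combined with a degeneration argument that handles the critical values. First I would use the summation formula of Proposition~\ref{p:sumind} to write $\T{Ind}(g(A)) = \sum_{\la \in Z(g)} \T{Ind}_\la(g(A))$, so that it suffices to prove $\T{Ind}_\la(g(A)) = \T{deg}_\la(g) \cd \T{Ind}(A - \la)$ for each common zero $\la$, and that the local index vanishes for $\la \in Z(g) \setminus \Om$ (i.e. for zeroes lying on the topological boundary of the Taylor spectrum, where $\T{deg}_\la(g)$ is not defined by the convention adopted above). Via Proposition~\ref{p:locspe} the local index is $-\sum_i (-1)^i \T{Dim}_\cc H_i(g,\sH_\la)$, so the whole computation takes place with the localized Koszul complex $K_*(g,\sH_\la)$.

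Next I would treat the regular zeroes. If $\la \in Z(g) \cap \Om$ is a regular point for $g = p_n(g)$, then Theorem~\ref{t:regind} (with $m = n$) gives $\T{Ind}_\la(g(A)) = \T{Ind}(A - \la)$ directly, while Lemma~\ref{l:locdeg} together with the regularity identity $\G m_\la = g_1 \C O_\la \plp g_n \C O_\la$ gives $\T{deg}_\la(g) = \T{Dim}_\cc(\C O_\la/g\C O_\la) = 1$; so the formula holds at regular zeroes. To reach a general holomorphic map $g$ whose zeroes may be critical, I would perturb: by the homotopy invariance of the index (\cite[Theorem~3]{Cur:FOD}) there is a neighbourhood $U$ of $0 \in \cc^n$ with $\T{Ind}(g(A) - z) = \T{Ind}(g(A))$ and $g(A) - z$ Fredholm for all $z \in U$; since $p_n(g)(C_{p_n(g)})$ has Lebesgue measure zero by Sard's lemma, I may pick a regular value $z_0 \in U$ and work with $g - z_0$. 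For this perturbed map every zero in $\Om$ is regular and contributes $\T{Ind}(A - \la)$ to the index, and it remains to see how the total number of such zeroes, counted with the multiplicities $\T{deg}_\la(g)$, behaves. The key point is the classical fact that the local degree is stable under such deformation: for fixed $\la \in Z(g) \cap \Om$ and a small ball $B_\ep$ with $Z(g) \cap \bar B_\ep = \{\la\}$, the zeroes of $g - z_0$ inside $B_\ep$ (all regular for small $z_0$) number exactly $\T{deg}_\la(g) = \T{deg}(0, g, B_\ep)$, since the degree is a homotopy invariant and a regular value is attained simple-point by simple-point. Hence $\T{Ind}(g(A)) = \T{Ind}(g(A) - z_0) = \sum_{\mu \in Z(g - z_0) \cap \Om} \T{Ind}(A - \mu)$, and grouping the $\mu$'s according to which $\la \in Z(g) \cap \Om$ they approach, using that $\T{Ind}(A - \mu)$ is locally constant in $\mu$ on $\Om \setminus \T{Sp}_{\T{ess}}(A)$ (Theorem~\ref{t:essspe}), turns this into $\sum_{\la \in Z(g) \cap \Om} \T{deg}_\la(g) \cd \T{Ind}(A - \la)$.

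Two gaps still need to be closed. First, I must show that the perturbed zero set $Z(g - z_0)$ contains nothing new and loses nothing near $\partial \T{Sp}(A)$: the compactness of $\T{Sp}(A)$ and the continuity of $g$ confine all zeroes of $g - z_0$, for $z_0$ small, to a fixed compact neighbourhood of $Z(g)$ inside $\T{Sp}(A)$, and the zeroes in $Z(g) \setminus \Om$ must be dealt with separately — this is where I expect the main obstacle to lie. For a zero $\la$ on the boundary of $\T{Sp}(A)$ the ring $\C O_\la$ of convergent power series near $\la$ still acts on $H(g,\sH_\la)$ by Proposition~\ref{p:powfac}, and $g_1, \ldots, g_n$ generate an ideal in $\C O_\la$ whose zero locus (as a germ) meets $\T{Sp}(A)$ only in $\{\la\}$; I would argue that $\la$ cannot be an isolated point of the common zero germ inside all of a neighbourhood in $\cc^n$ unless it already contributes a well-defined finite local degree — and the claim to establish is that the corresponding contribution to $\T{Ind}(g(A))$ is accounted for either by a term in the sum over $\Om$ or vanishes. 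Concretely, I would invoke that the perturbation moves boundary zeroes either out of $\T{Sp}(A)$ entirely (contributing nothing, since Fredholmness forces the relevant homology to vanish once the zero leaves $\T{Sp}(A)$, by Theorem~\ref{t:equzerspe}) or into $\Om$ (where they are counted). The second gap is purely bookkeeping: verifying that the multiplicity in the sense of the topological degree $\T{deg}(0,g,B_\ep)$ agrees with the intersection multiplicity $\T{deg}_\la(g)$ appearing in the statement, which is exactly the content of Lemma~\ref{l:locdeg} combined with the definition of $\T{deg}_\la$ adopted just above the theorem, so no further work is needed there.
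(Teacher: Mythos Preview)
Your approach is essentially the paper's: perturb $g$ by a regular value $z_0$ using Sard and homotopy invariance, apply the regular-case local index result (Theorem~\ref{t:regind}) at each zero of $g-z_0$, and then regroup the regular zeroes by balls around the original zeroes, using stability of the degree and local constancy of $\mu \mapsto \T{Ind}(A-\mu)$ away from $\T{Sp}_{\T{ess}}(A)$. The opening reduction via Proposition~\ref{p:sumind} to the pointwise identity $\T{Ind}_\la(g(A)) = \T{deg}_\la(g)\,\T{Ind}(A-\la)$ is not actually what you carry out (nor what the paper does here; that pointwise statement is the later local index theorem), so you can drop it.

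The one place where you complicate matters unnecessarily is the boundary zeroes. You try to track whether the perturbed zeroes near a boundary $\la$ leave $\T{Sp}(A)$ or migrate into $\Om$, and worry about what happens to them. The paper's argument is much simpler and you have all the ingredients for it: since $g(A)$ is Fredholm, $Z(g)\cap \T{Sp}_{\T{ess}}(A)=\emptyset$, so around any $\la \in Z(g)\cap \partial\T{Sp}(A)$ there is a ball $B_{\ep(\la)}$ disjoint from $\T{Sp}_{\T{ess}}(A)$. On such a ball $\mu\mapsto \T{Ind}(A-\mu)$ is constant; since $\la$ lies on the boundary, the ball meets $\cc^n\setminus\T{Sp}(A)$, where the index is $0$. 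Hence $\T{Ind}(A-\mu)=0$ for \emph{every} $\mu\in B_{\ep(\la)}$, and the entire contribution $\sum_{\mu \in Z(g-z_0)\cap B_{\ep(\la)}} \T{Ind}(A-\mu)$ vanishes regardless of where those $\mu$'s sit relative to $\T{Sp}(A)$. With this observation your ``main obstacle'' disappears in one line, and the remainder of your argument goes through exactly as the paper's.
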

\begin{proof}
Let us choose a small ball $B_{\ep(\la)} \su \cc^n$ for each $\la \in Z(g)$ such that the following conditions hold:
\begin{enumerate}
\item $\la \in B_{\ep(\la)}$.
\item $B_{\ep(\la)} \cap B_{\ep(\mu)} = \emptyset$ whenever $\la \neq \mu$.
\item $B_{\ep(\la)} \cap \T{Sp}_{\T{ess}}(A) = \emptyset$ for all $\la \in Z(g)$.
\item $B_{\ep(\la)} \su \Om$ for all $\la \in Z(g) \cap \Om$.
\end{enumerate}

Recall that the image $g(C_g)$ of the set of critical points for $g$ has measure zero. We can thus find a sequence $\{\al_k\}$ of elements in $\cc^n$ which converges to zero such that the set of common zeroes $Z(g - \al_k)$ consists entirely of regular points for all $k \in \nn$.

Let us fix some element $\al := \al_k$ from the above sequence.

By the homotopy invariance of the index and of the degree we may assume that
\begin{equation}\label{eq:deg}
\T{Ind}\big((g - \al)(A)\big) = \T{Ind}(g(A)) \q \T{and} \q
\T{deg}(0,g-\al,B_{\ep(\la)}) = \T{deg}_\la(g)
\end{equation}
for all $\la \in Z(g) \cap \Om$. See \cite[Theorem 3.16]{Sch:NFA} and \cite[Theorem 3]{Cur:FOD}.

Furthermore, by a compactness argument we may assume that
\[
Z(g - \al) = Z(g-\al) \cap B \,\, \, \, \T{where } \, \, \, \, B := \cup_{\la \in Z(g)} B_{\ep(\la)}.
\]

An application of Theorem \ref{t:regind} now yields the second of the following identities:
\[
\begin{split}
\T{Ind}(g(A))
& = \T{Ind}( (g-\al)(A))
= \sum_{\mu \in Z(g-\al)} \T{Ind}(A - \mu) \\
& = \sum_{\la \in Z(g)} \, \sum_{\mu \in Z(g-\al) \cap B_{\ep(\la)}} \T{Ind}(A - \mu).
\end{split}
\]

Suppose that $\la \in Z(g) \cap \Om$. Since $Z(g-\al) \cap C_g = \emptyset$, the degree $\T{deg}(0,g-\al,B_{\ep(\la)}) \in \nn$ is nothing but the number of zeroes of the function $g - \al : B_{\ep(\la)} \to \cc^n$, thus
\[
\T{deg}_\la(g) = \T{deg}(0,g-\al,B_{\ep(\la)}) = \big| Z(g-\al) \cap B_{\ep(\la)} \big|.
\]

Furthermore, since $B_{\ep(\la)} \cap \T{Sp}_{\T{ess}}(A) = \emptyset$ we have that $\T{Ind}(A-\mu) = \T{Ind}(A - \la)$ for all $\mu \in B_{\ep(\la)}$. This allows us to compute as follows:
\[
\begin{split}
\sum_{\mu \in Z(g-\al) \cap B_{\ep(\la)}} \T{Ind}(A - \mu)
= \big| Z(g-\al) \cap B_{\ep(\la)}\big| \cd \T{Ind}(A - \la)
= \T{deg}_\la(g) \cd \T{Ind}(A - \la).
\end{split}
\]

To finish the proof of the global index theorem we therefore only need to verify the identity
\[
\sum_{\mu \in Z(g- \al) \cap B_{\ep}(\la)} \T{Ind}(A - \mu) = 0
\]
whenever $\la \in Z(g) \cap \pa\T{Sp}(A)$. But for such a $\la$, $\T{Ind}(A-\la) = 0$, and the desired identity follows since $\T{Ind}(A-\la) = \T{Ind}(A - \mu)$ for all $\mu \in B_{\ep}(\la)$.
\end{proof}

The next result will only rely on the finiteness of the zero set $Z(g)$ and some considerations on dimensions.

\begin{theorem}
Suppose that $g(A)$ is Fredholm and that $m < n$. Then one of the following holds: Either $Z(g) \su \T{Sp}(A) \setminus \T{cl}(\Om)$ or $g(A)$ is invertible.
\end{theorem}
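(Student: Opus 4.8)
The plan is to reduce the whole statement to the single assertion $Z(g)\cap\T{cl}(\Om)=\emptyset$; this is precisely the first of the two alternatives, and I expect it to hold unconditionally under the hypotheses, so that the disjunction is automatically satisfied. (As a side remark, $Z(g)=\emptyset$ is equivalent to the invertibility of $g(A)$, by Theorem~\ref{t:equzerspe} together with the non-emptiness of the Taylor spectrum.)

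To establish $Z(g)\cap\T{cl}(\Om)=\emptyset$ I would argue by contradiction, assuming there is some $\la_0\in Z(g)\cap\T{cl}(\Om)$. The decisive first step is to exploit that the Fredholmness of $g(A)$ is stable under small perturbations of the symbol $g$. Since $g(A)$ is Fredholm we have $0\notin g(\T{Sp}_{\T{ess}}(A))$, and the set $g(\T{Sp}_{\T{ess}}(A))$ is compact, hence closed; by continuity of $g$ there is therefore an open neighbourhood $N$ of $\la_0$, contained in the domain of $g$, with $g(N)\cap g(\T{Sp}_{\T{ess}}(A))=\emptyset$. Because $\la_0\in\T{cl}(\Om)$, the open set $N$ meets $\Om$, so I can choose a genuine interior point $\la_1\in N\cap\Om$ and set $\beta:=g(\la_1)$. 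Then $\beta\notin g(\T{Sp}_{\T{ess}}(A))$, hence $(g-\beta)(A)$ is again Fredholm, and Theorem~\ref{t:equzerspe} then forces $Z(g-\beta)=g^{-1}(\beta)\cap\T{Sp}(A)$ to be a finite set.

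The second step contradicts this finiteness. Since $\la_1\in\Om$, I can find a ball $B_r(\la_1)$ with $B_r(\la_1)\su\T{Sp}(A)$ on which $g$ is holomorphic. Inside $B_r(\la_1)$ the fibre $g^{-1}(\beta)$ is the common zero locus of the $m$ holomorphic functions $g_1-\beta_1,\ldots,g_m-\beta_m$ on an open subset of $\cc^n$, and it contains $\la_1$; by the standard dimension theorem for analytic sets, every irreducible component of its germ at $\la_1$ has dimension at least $n-m\geq 1$. A positive-dimensional analytic germ is an infinite set, since near its regular points it is a complex submanifold of positive dimension. Hence $g^{-1}(\beta)\cap B_{r'}(\la_1)$ is infinite for a suitable $0<r'\leq r$, and as $B_{r'}(\la_1)\su\T{Sp}(A)$ this infinite set is contained in $Z(g-\beta)$ --- contradicting the finiteness obtained above. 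Therefore no such $\la_0$ exists, which proves $Z(g)\cap\T{cl}(\Om)=\emptyset$ and hence the theorem.

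I expect the only real obstacle to be the idea underlying the first step. One cannot run the dimension argument directly on $g^{-1}(0)$, because a common zero $\la_0$ lying on the boundary of $\Om$ need not have any spectral ball around it. The remedy is to slide to a nearby interior point $\la_1$ and to perturb $g$ by the constant $g(\la_1)$; this perturbation is harmless precisely because the Fredholmness of $g(A)$ persists under it, and it is here that the hypothesis enters in an essential way. The remaining ingredients --- the lower bound $n-m$ for the local dimension of the common zero set of $m$ equations in $n$ variables, and the infinitude of positive-dimensional analytic germs --- are standard facts of complex analytic geometry.
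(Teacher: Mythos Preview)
Your proof is correct and follows essentially the same approach as the paper: reduce to a zero in the interior $\Omega$ by perturbing $g$ by a small constant (preserving Fredholmness), and then contradict the finiteness of the zero set via the lower bound $n-m\geq 1$ on the dimension of the analytic germ. The only difference is organizational --- the paper splits into the cases $\la\in\Om$ and $\la\in\pa\Om$ and reduces the second to the first, whereas you handle both at once by immediately moving to a nearby interior point; this is a minor streamlining rather than a different idea.
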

\begin{proof}
Notice that $g(A)$ is invertible if and only if $Z(g) = \emptyset$. Thus suppose for contradiction that $Z(g) \cap \T{cl}(\Om) \neq \emptyset$.

Let $\la \in Z(g) \cap \T{cl}(\Om)$. Suppose first that $\la \in \Omega$. From Theorem \ref{t:equzerspe} we know that $Z(g)$ is a finite set. In particular we get that $\la$ is an isolated point. But this is a contradiction since the analytic dimension of $\cc^n$ at $\la$ is $n$ whereas the number of coordinates for $g = (g_1,\ldots,g_m)$ is strictly less than $n$. See \cite[Chapter 5, \S 3.1]{GrRe:CAS}.

Suppose next that $\la \in \pa \Omega$. We can then find a sequence $\{\la_k\}$ of elements in $\Omega$ which converges to $\la$. Since $g(\la) = 0$ and $g : \T{Sp}(A) \to \cc^m$ is continuous we get that $\{g(\la_k)\}$ converges to zero. Furthermore, since $g(A)$ is Fredholm we have that $0 \notin g\big( \T{Sp}_{\T{ess}}(A) \big) = \T{Sp}_{\T{ess}}(g(A))$. By the compactness of $\T{Sp}_{\T{ess}}(A) \su \cc^n$ we can thus find an open neighborhood $U$ of zero such that $U \cap \T{Sp}_{\T{ess}}(g(A)) = \emptyset$. Let us choose a $k \in \nn$ such that $\mu := g(\la_k) \in U$.

The function $h := g - \mu : \T{Sp}(A) \to \cc^m$ is now holomorphic and the set $Z(h) \cap \Omega$ is non-trivial. Furthermore, since $0 \notin h(\T{Sp}_{\T{ess}}(A)) = \T{Sp}_{\T{ess}}(g(A)) - \mu$ we conclude that $h(A)$ is Fredholm. The above argument then leads to a contradiction in this case as well.
\end{proof}

\begin{remark}
It is in general \emph{not} true that $\T{Ind}(g(A)) = 0$ when $m < n$. As an example, suppose that $n = 2$ and that $A = (B,B)$ where $B : \sH \to \sH$ is a Fredholm operator with $\T{Ind}(B) \neq 0$. If $g : \cc^2 \to \cc$ is the projection onto the first coordinate we get that $\T{Ind}(g(A)) = \T{Ind}(B)$.
\end{remark}

\section{Reciprocity of local indices}\label{s:respro}

\noindent In this section the following notational convention will be used:
\bigskip

\noindent \emph{Let $B = (B_1,\ldots,B_n)$ be an extra commuting tuple of the same length as $A = (A_1,\ldots,A_n)$ but acting on a possibly different Hilbert space $\sG$. The letter $\sK := \sH \hot \sG$ will refer to the completed tensor product of our two Hilbert spaces.}
\bigskip

\noindent The purpose of this section is twofold. We will investigate an analytic version of the algebraic localization procedure introduced in Section \ref{s:nilban}. Furthermore, we will prove a reciprocity formula which relates the local indices with respect to the two independent commuting tuples $A$ and $B$.

Before presenting these results, we need some preliminary lemmas. They are mainly concerned with the behaviour of indices and spectra under the tensor product operation.

\begin{lemma}\label{l:tenfun}
There is an inclusion
\[
\T{Sp}\big( (A \ot 1) \op (1 \ot B) \big) \su \T{Sp}(A) \ti \T{Sp}(B)
\]
of spectra, where $A \ot 1$ and $1 \ot B$ act on $\sK := \sH \hot \sG$. Furthermore, for each $f \in \C O(\T{Sp}(A))$ and $h \in \C O(\T{Sp}(B))$, we have that
\begin{equation}\label{eq:tenfun}
f(A \ot 1) = f(A) \ot 1 \q \T{and} \q h(1 \ot B) = 1 \ot h(B).
\end{equation}
\end{lemma}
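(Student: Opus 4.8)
The plan is to realize the Koszul complex of the combined tuple on $\sK$ as a tensor product of the Koszul complexes of $A$ on $\sH$ and of $B$ on $\sG$, and to deduce both assertions from this. The key step is the observation that for every $(\la,\mu) \in \cc^n \ti \cc^n$ there is a natural isomorphism of chain complexes
\[
K_*\big((A \ot 1 - \la) \op (1 \ot B - \mu), \sK\big) \;\cong\; K_*(A - \la, \sH) \,\hot\, K_*(B - \mu, \sG),
\]
the right-hand side being the completed graded tensor product of complexes. Indeed, under the identification $\La_*(\cc^{2n}) \cong \La_*(\cc^n) \ot \La_*(\cc^n)$ (the first factor generated by $e_1,\ldots,e_n$ and the second by $e_{n+1},\ldots,e_{2n}$) the chains $\sK \ot \La_*(\cc^{2n})$ become $(\sH \ot \La_*(\cc^n)) \hot (\sG \ot \La_*(\cc^n))$, and the differential $\sum_{i=1}^n (A_i \ot 1) \ot \ep_i^* + \sum_{i=1}^n (1 \ot B_i) \ot \ep_{n+i}^*$ breaks up as $d_{A-\la} \hot 1$ plus the operator $1 \hot d_{B-\mu}$ carrying exactly the Koszul sign produced by moving $\ep_{n+i}^*$ past the factors $e_I$ with $I \su \{1,\ldots,n\}$; this is the standard tensor-product differential (compare \cite[Theorem 3.2.3]{Kaa:JTS}).

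For the first assertion, suppose $(\la,\mu) \notin \T{Sp}(A) \ti \T{Sp}(B)$, say $\la \notin \T{Sp}(A)$ (the other case being symmetric). Then $K_*(A - \la, \sH)$ is exact, and being a finite-length exact complex of Hilbert spaces it admits a bounded chain contraction $s$, i.e.\ $d_{A-\la} s + s\, d_{A-\la} = \T{id}$. A direct sign check then shows that $s \hot 1$ is a bounded chain contraction of the tensor-product complex: the two contributions not involving $d_{B-\mu}$ sum to $(d_{A-\la} s + s\, d_{A-\la}) \hot 1 = \T{id}$, while the two contributions involving $d_{B-\mu}$ cancel. Hence the tensor-product complex is exact, and by the isomorphism above so is $K_*((A \ot 1 - \la) \op (1 \ot B - \mu), \sK)$, giving $(\la,\mu) \notin \T{Sp}((A \ot 1) \op (1 \ot B))$. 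Tensoring an exact finite-length Koszul complex with $\T{id}_\sG$, respectively with $\T{id}_\sH$, shows in the same manner that $\T{Sp}(A \ot 1) \su \T{Sp}(A)$ and $\T{Sp}(1 \ot B) \su \T{Sp}(B)$; in particular $f(A \ot 1)$ and $h(1 \ot B)$ are well defined for $f$ a germ on $\T{Sp}(A)$ and $h$ a germ on $\T{Sp}(B)$.

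For the identities \eqref{eq:tenfun} I would invoke the functoriality of the holomorphic functional calculus, Proposition \ref{p:functoriality}. We may assume $\sG \neq \{0\}$, the case $\sK = \{0\}$ being trivial, and fix a nonzero vector $\xi \in \sG$; the bounded linear map $\io_\xi : \sH \to \sK$, $\eta \mapsto \eta \ot \xi$, then satisfies $(A_i \ot 1) \io_\xi = \io_\xi A_i$ for all $i$. Since $\T{Sp}(A) \cup \T{Sp}(A \ot 1) = \T{Sp}(A)$ by the previous paragraph, Proposition \ref{p:functoriality} gives $\io_\xi f(A) = f(A \ot 1) \io_\xi$, that is, $f(A \ot 1)(\eta \ot \xi) = (f(A) \ot 1)(\eta \ot \xi)$ for all $\eta \in \sH$; as $\xi \in \sG$ was an arbitrary nonzero vector, this holds on all elementary tensors. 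Both sides are bounded operators and elementary tensors span a dense subspace of $\sK$, so $f(A \ot 1) = f(A) \ot 1$; the identity $h(1 \ot B) = 1 \ot h(B)$ follows symmetrically, using $\sG \to \sK$, $\ga \mapsto \ze \ot \ga$, for a fixed nonzero $\ze \in \sH$.

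The main obstacle I anticipate is the bookkeeping in the first step — identifying the combined Koszul complex with the graded tensor product with all signs correct — together with the (standard, but worth stating explicitly) point that an exact finite-length complex of Hilbert spaces is boundedly contractible, so that the chain contraction survives the completed tensor product; once these are settled the rest is formal.
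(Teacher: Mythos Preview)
Your proof is correct and follows essentially the same strategy as the paper: exactness under tensor product for the spectral inclusion, and functoriality (Proposition~\ref{p:functoriality}) for the functional calculus identities. There are two minor differences worth noting. For the first assertion, the paper argues more economically: once $\la \notin \T{Sp}(A)$, it observes directly that $K_*(A\ot 1 - \la, \sK) \cong K_*(A-\la,\sH) \hot \sG$ is exact (since $\cdot\,\hot\sG$ preserves exact sequences of Hilbert spaces), and then exactness of the full Koszul complex of $(A\ot 1 - \la)\op(1\ot B - \mu)$ follows from Lemma~\ref{l:koscon}; there is no need to identify the combined complex with the full graded tensor product of \emph{both} Koszul complexes, so the sign bookkeeping you flag as the main obstacle is avoidable. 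For the second assertion, the paper uses the intertwiners $1\ot\ga : \sK \to \sH$ for $\ga \in \sL(\sG,\cc)$ rather than your inclusions $\io_\xi : \sH \to \sK$; both are legitimate applications of Proposition~\ref{p:functoriality}, and your version via inclusions and density is equally clean.
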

\begin{proof}
Let $(\la,\mu) \notin \T{Sp}(A) \ti \T{Sp}(B)$ and let us show that $(\la,\mu) \notin \T{Sp}\big( (A \ot 1) \op (1 \ot B) \big)$. Without loss of generality we may assume that $\la \notin \T{Sp}(A)$. It is then sufficient to prove that the Koszul complex $K(A \ot 1 - \la, \sH \hot \sG)$ is exact. But this is true since the Koszul complex $K(A - \la,\sH)$ is exact and since the functor $\cd \, \hot \sG$ preserves exact sequences of Hilbert spaces.

Let us show that $f(A \ot 1) = f(A) \ot 1$. The other identity in \eqref{eq:tenfun} follows by a similar argument. Let $\ga \in \sL(\sG,\cc)$ be a linear functional on $\sG$. It is then enough to prove the identity
\[
f(A)(1 \ot \ga) = (1 \ot \ga)f(A \ot 1).
\]
But this follows from Proposition \ref{p:functoriality} since $A_i (1 \ot \ga) = (1 \ot \ga)(A_i \ot 1)$ for all $i \in \{1,\ldots,n\}$. Notice that $\T{Sp}(A) \su \T{Sp}(A \ot 1)$ by an application of the first part of the lemma.
\end{proof}

\begin{lemma}\label{l:isodif}
Let $C$ be an extra commuting tuple on $\sK$ such that $(A \ot 1 - 1 \ot B) \op C$ commutes. Let $f : \T{Sp}(A) \cup \T{Sp}(B) \to \cc^m$ be holomorphic. Then there exists a graded isomorphism of Koszul homology groups
\[
H\big( (A \ot 1  - 1 \ot B) \op C \op (f(A) \ot 1), \sK \big)
\cong H\big( (A \ot 1 - 1 \ot B) \op C \op (1 \ot f(B)), \sK \big).
\]
\end{lemma}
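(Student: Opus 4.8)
The plan is to present both sides as (iterated) mapping cones via Lemma~\ref{l:koscon} and to replace the operators $f_j(A) \ot 1$ by $1 \ot f_j(B)$ one index at a time, using an explicit chain homotopy built from a holomorphic division. Write $D := (A_1 \ot 1 - 1 \ot B_1, \ldots, A_n \ot 1 - 1 \ot B_n)$, and for $1 \le j \le m$ let $S_j$ be the commuting tuple obtained from $D \op C$ by adjoining $1 \ot f_1(B), \ldots, 1 \ot f_{j-1}(B)$ together with $f_{j+1}(A) \ot 1, \ldots, f_m(A) \ot 1$. By the symmetry of Koszul homology under permutations of the tuple and by iterating Lemma~\ref{l:koscon}, the homology group $H\big(D \op C \op (1 \ot f_1(B), \ldots, 1 \ot f_{j-1}(B), f_j(A) \ot 1, \ldots, f_m(A) \ot 1), \sK\big)$ is the homology of the mapping cone $C\big(f_j(A) \ot 1, K_*(S_j,\sK)\big)$, and changing $f_j(A) \ot 1$ into $1 \ot f_j(B)$ turns it into the homology of $C\big(1 \ot f_j(B), K_*(S_j,\sK)\big)$. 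Hence it suffices to produce, for each $j$, a chain homotopy between the self-chain-maps $K_*(f_j(A) \ot 1)$ and $K_*(1 \ot f_j(B))$ of $K_*(S_j,\sK)$: the triangular automorphism $\left(\begin{smallmatrix} 1 & H_j \\ 0 & 1\end{smallmatrix}\right)$ attached to such a homotopy $H_j$ is then an isomorphism of the two mapping cones, and composing these over $j = 1, \ldots, m$ yields the desired graded isomorphism. (The isomorphisms so obtained will be seen to commute with the actions of $A \ot 1$ and $1 \ot B$, so they also respect the relevant module structure.)

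To build $H_j$, I would use holomorphic division. The germ of $f_j(x) - f_j(y)$ vanishes on the diagonal $\{x = y\}$, hence there are holomorphic germs $q^{(j)}_1, \ldots, q^{(j)}_n$ with
\[
f_j(x) - f_j(y) = \sum_{i=1}^{n} (x_i - y_i)\, q^{(j)}_i(x,y)
\]
near $\T{Sp}\big((A \ot 1) \op (1 \ot B)\big) \su \T{Sp}(A) \ti \T{Sp}(B)$ (Lemma~\ref{l:tenfun}). Feeding this into the holomorphic functional calculus of the commuting tuple $(A \ot 1) \op (1 \ot B)$ and invoking the identities $f_j(A \ot 1) = f_j(A) \ot 1$ and $f_j(1 \ot B) = 1 \ot f_j(B)$ of Lemma~\ref{l:tenfun}, I obtain bounded operators $Q^{(j)}_i := q^{(j)}_i\big((A \ot 1) \op (1 \ot B)\big)$ satisfying $f_j(A) \ot 1 - 1 \ot f_j(B) = \sum_{i=1}^{n} D_i\, Q^{(j)}_i$. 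Since each $Q^{(j)}_i$ lies in the bicommutant of $A_1 \ot 1, \ldots, A_n \ot 1, 1 \ot B_1, \ldots, 1 \ot B_n$, and since every entry of $S_j$ — the $D_l$, the $C_l$, and the $f_k(A)\ot1$, $1\ot f_k(B)$ — commutes with those operators, the chain map $K_*(Q^{(j)}_i)$ commutes with the Koszul differential $d_{S_j}$.

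Now recall that if $T$ is a commuting tuple with $i$-th entry $T_i$, then $K_*(T_i)$ is null-homotopic on $K_*(T,V)$ with homotopy the exterior multiplication $\ep_i$ by the $i$-th basis vector, because $d_T \ep_i + \ep_i d_T = T_i$ by the relations $\ep_i^* \ep_l + \ep_l \ep_i^* = \delta_{il}$. Applying this with $T = S_j$ (whose first $n$ entries are $D_1, \ldots, D_n$) and using that $K_*(Q^{(j)}_i)$ is a chain map, the operator $H_j := \sum_{i=1}^{n} \ep_i\, K_*\!\big(Q^{(j)}_i\big)$ satisfies $d_{S_j} H_j + H_j d_{S_j} = \sum_{i=1}^n D_i Q^{(j)}_i = K_*(f_j(A) \ot 1) - K_*(1 \ot f_j(B))$, which is exactly the homotopy we need. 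As $\ep_i$ and $Q^{(j)}_i$ both commute with $A \ot 1$ and with $1 \ot B$, so does $H_j$, giving the compatibility with module structures claimed above.

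The step I expect to require the most care is the holomorphic division itself, and specifically the ring over which it is carried out: one must ensure that the $q^{(j)}_i$ are genuinely holomorphic on a \emph{neighbourhood} of $\T{Sp}\big((A \ot 1) \op (1 \ot B)\big)$, not merely in each stalk, which calls for a Stein-neighbourhood (Cartan Theorem~B) argument. A clean way to organise this is to establish the lemma first for entire symbols $f \in \C O(\cc^n)^m$, where the division holds globally over the Stein manifold $\cc^{2n}$, and then to reduce the general holomorphic case to this one using the continuity and uniqueness (Theorem~\ref{t:anauni}) of Taylor's functional calculus. A secondary, purely bookkeeping, point is to fix the sign conventions in the iterated mapping-cone identifications and in the triangular automorphisms so that the $m$ isomorphisms compose correctly.
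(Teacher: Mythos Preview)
Your approach is correct and in fact a refinement of the paper's. The paper reduces to $m=1$, writes down the two mapping-cone long exact sequences, and invokes \cite[Proposition~2.5.9]{EsPu:SDA} together with Lemma~\ref{l:tenfun} to conclude that the induced maps $H_*(f(A)\ot 1)$ and $H_*(1\ot f(B))$ on $H_*(D,\sK)$ coincide; the isomorphism of the cone homologies then follows abstractly from the five lemma and is, as the authors note, non-canonical. You carry out essentially the same idea one level down: the Eschmeier--Putinar proposition is precisely the holomorphic division $f_j(x)-f_j(y)=\sum_i(x_i-y_i)q_i^{(j)}(x,y)$, and instead of passing to homology you use it to produce an explicit chain homotopy and hence a triangular chain isomorphism of cones. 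What you gain is a canonical, module-equivariant isomorphism; what the paper gains is brevity, since it can quote the division lemma rather than redo the Stein-neighbourhood argument you flag.

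One point deserves attention. You assert that ``every entry of $S_j$ --- the $D_l$, the $C_l$, and the $f_k(A)\ot 1$, $1\ot f_k(B)$ --- commutes with $A_k\ot 1$ and $1\ot B_k$'', and then deduce that $Q_i^{(j)}$ is a chain map. For the $D_l$ and the $f_k$-terms this is clear, but the hypothesis on $C$ only says that $C_l$ commutes with the \emph{differences} $A_i\ot 1 - 1\ot B_i$, not with each factor separately; so $[C_l,Q_i^{(j)}]$ is not obviously zero, and your computation of $d_{S_j}H_j+H_jd_{S_j}$ acquires an unwanted term $\sum_{i,l}\ep_i[Q_i^{(j)},C_l]\ep_{n+l}^*$. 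The paper's argument, once unpacked, faces the identical issue (the equality $H_*(f(A)\ot 1)=H_*(1\ot f(B))$ on $H_*(D\op C,\sK)$ uses the same $Q_i$'s), and in the only application (Theorem~\ref{t:respro}) one has $C=\emptyset$, so nothing is lost. If you want the lemma in the stated generality, either add the hypothesis that $C$ commutes with $A\ot 1$ and $1\ot B$ separately, or observe that in the inductive reduction over $m$ the ``extra'' tuple consists only of operators of the form $f_k(A)\ot 1$ or $1\ot f_k(B)$, for which the needed commutativity holds automatically.
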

\begin{proof}
We use the short notation $f(A) := f(A) \ot 1$ and $f(B) := 1 \ot f(B)$ as well as $D := (A \ot 1 - 1 \ot B) \op C$.

Without loss of generality we may assume that $m = 1$. By Lemma \ref{l:koscon} we have two long exact sequences of homology groups
\[
\xymatrix{
\ldots \ar[r] 
& H_{*+1}( D \op f(A),\sK) \ar[r]
& H_*(D,\sK) \ar[r]^-{H_*(f(A))}
& H_*(D,\sK) \ar[d] \\
& \hspace{30pt} \ldots \hspace{30pt}
& H_{*-1}(D,\sK) \ar[l]
& H_*(D \op f(A),\sK) \ar[l] }
\]
\[
\xymatrix{
\ldots \ar[r] 
& H_{*+1}( D \op f(B),\sK) \ar[r]
& H_*(D,\sK) \ar[r]^-{H_*(f(B))}
& H_*(D,\sK) \ar[d] \\
& \hspace{30pt} \ldots \hspace{30pt} 
& H_{*-1}(D,\sK) \ar[l]
& H_*(D \op f(B),\sK) \ar[l] }
\]
However, by an application of \cite[Proposition 2.5.9]{EsPu:SDA} and Lemma \ref{l:tenfun} we get that the linear maps $H_*(f(A) \ot 1)$ and $H_*(1 \ot f(B))$ are identical. This implies the result of the lemma since we are working in the category of vector spaces over $\cc$. Notice however that the isomorphism which we obtain is not canonical since it depends on the choice of complementary subspaces.
\end{proof}

\begin{lemma}\label{l:indproind}
Suppose that $A$ and $B$ are Fredholm. Then the commuting tuple $(A \ot 1) \op (1 \ot B)$ is Fredholm on $\sH \hot \sG$ with index
\[
\T{Ind}\big( (A \ot 1) \op (1 \ot B) \big) = -\T{Ind}(A) \cd \T{Ind}(B).
\]
\end{lemma}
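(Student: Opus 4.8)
The plan is to compute the Koszul homology of $(A\ot 1)\op(1\ot B)$ via the spectral sequence of Proposition \ref{p:speind}, applied to the pair of commuting tuples $A\ot 1$ and $1\ot B$ on the Hilbert space $\sK = \sH\hot\sG$. First I would identify the homology of the "vertical" tuple $1\ot B$ acting on $\sK$: since the Koszul complex $K_*(1\ot B,\sK)$ is obtained from $K_*(B,\sG)$ by applying the functor $\sH\hot(\cd)$, which preserves exactness of Hilbert-space complexes (this was used already in the proof of Lemma \ref{l:tenfun}), one expects $H_q(1\ot B,\sK)\cong \sH\hot H_q(B,\sG)$, or at least a space of the right dimension $\T{Dim}_\cc\sH\cdot\T{Dim}_\cc H_q(B,\sG)$ when $\sH$ is finite dimensional. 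More care is needed when $\sH$ is infinite dimensional; see below. Then $H_p(A,H_q(1\ot B,\sK)) \cong H_p(A,\sH)\ot H_q(B,\sG)$, which is finite dimensional for all $p,q$ because $A$ and $B$ are both Fredholm. By Proposition \ref{p:speind} this already gives that $(A\ot 1)\op(1\ot B)$ is Fredholm.

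Next I would read off the index from the $E^2$-page. Proposition \ref{p:speind} gives
\[
\T{Ind}\big((A\ot 1)\op(1\ot B)\big)
= \sum_{p,q}(-1)^{p+q+1}\T{Dim}_\cc\big(H_p(A,\sH)\ot H_q(B,\sG)\big).
\]
Since $\T{Dim}_\cc(U\ot W) = \T{Dim}_\cc U\cdot\T{Dim}_\cc W$, the double sum factors as
\[
-\Big(\sum_p(-1)^{p+1}\T{Dim}_\cc H_p(A,\sH)\Big)\cdot\Big(\sum_q(-1)^{q+1}\T{Dim}_\cc H_q(B,\sG)\Big)
= -\T{Ind}(A)\cdot\T{Ind}(B),
\]
using a bookkeeping of signs ($(-1)^{p+q+1} = -(-1)^{p+1}(-1)^{q+1}$) together with the definition of the Fredholm index. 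This yields the claimed formula.

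The main obstacle is the identification $H_q(1\ot B,\sK)\cong \sH\hot H_q(B,\sG)$ in the infinite-dimensional setting: the completed tensor product $\sH\hot(\cd)$ is exact on short exact sequences of Hilbert spaces, but the Koszul complex of $B$ is a complex with possibly non-closed boundary maps, so passing to homology requires knowing that images are closed — which holds precisely because $B$ is Fredholm (finite-dimensional homology forces closed range). The cleanest route is to reduce to the finite-dimensional case by first replacing $\sG$ with a bounded complex of finite-dimensional spaces quasi-isomorphic to $K_*(B,\sG)$, or alternatively to invoke that for a Fredholm complex one may split off a contractible summand so that the remaining complex has zero differential; then $\sH\hot(\cd)$ is trivially exact on it. An even more economical alternative, avoiding the spectral sequence entirely, is: by Fredholmness $K_*(B,\sG)$ is homotopy equivalent (as a complex of Banach spaces) to the complex $H_*(B,\sG)$ with zero differentials; tensoring with $K_*(A,\sH)$ over $\cc$ and totalizing, one gets that $K_*((A\ot 1)\op(1\ot B),\sK)$ is homotopy equivalent to $K_*(A,\sH)\ot H_*(B,\sG)$, whose homology is $\bigoplus_q H_*(A,\sH)\ot H_q(B,\sG)$, and the index computation proceeds as above. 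I would present the spectral sequence argument as the main line, since Proposition \ref{p:speind} is already available, and handle the tensor-exactness point as a short preliminary remark.
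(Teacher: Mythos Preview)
Your proposal is correct and follows essentially the same route as the paper: apply Proposition~\ref{p:speind} to the pair $(A\ot 1,\,1\ot B)$, identify $H_p\big(A\ot 1,H_q(1\ot B,\sK)\big)\cong H_p(A,\sH)\ot H_q(B,\sG)$ via exactness of the Hilbert-space tensor product (using that Fredholmness of $B$ gives closed images), and factor the signed sum. Your discussion of the closed-range issue is in fact more explicit than the paper's, which simply asserts the isomorphism and defers the justification to the proof of Proposition~\ref{p:locana}.
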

\begin{proof}
By Proposition \ref{p:speind} it suffices to show that $H_p\big(A \ot 1, H_q((1 \ot B),\sK)\big)$ is finite dimensional for each $p,q \in \{0,\ldots,n\}$ and that
\[
\sum_{p,q} (-1)^{p+q+1} \T{Dim}\Big( H_p\big(A \ot 1, H_q((1 \ot B),\sK)\big) \Big) = \T{Ind}(A) \cd \T{Ind}(B).
\]

Using the Fredholmness of $A$ and $B$ together with the exactness of the Hilbert space tensor product we get the isomorphisms
\[
H_p\big(A \ot 1, H_q((1 \ot B),\sK)\big) \cong H_p\big(A \ot 1, \sH \ot H_q(B,\sG) \big) \cong H_p(A,\sH) \ot H_q(B,\sG).
\]
See also the proof of Proposition \ref{p:locana}.

This implies that
\[
\begin{split}
& \sum_{p,q} (-1)^{p+q+1} \T{Dim}\Big( H_p\big(A \ot 1, H_q((1 \ot B),\sK)\big) \Big) \\
& \q = \sum_p (-1)^p \T{Dim}\big( H_p(A,\sH) \big) \cd \sum_q (-1)^{q+1} \T{Dim} \big( H_q(B,\sG) \big)
= -\T{Ind}(A) \cd \T{Ind}(B)
\end{split}
\]
and the lemma is proved.

\end{proof}

\begin{lemma}\label{l:niliso}
Let $V$ be a vector space of finite dimension over $\cc$. Suppose that $A = (A_1,\ldots,A_n)$ is Fredholm on $\sH$ and let $C = (C_1,\ldots,C_n)$ be a commuting tuple on $V$ with $C_i$ nilpotent for all $i \in \{1,\ldots,n\}$. We then have the identity
\[
\T{Ind}(A \ot 1 + 1 \ot C) = \T{Ind}(A) \cd \T{Dim}(V)
\]
of Fredholm indices where the commuting tuple $A \ot 1 + 1 \ot C$ acts on the Hilbert space $\sH \ot V$.
\end{lemma}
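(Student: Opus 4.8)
The plan is to reduce the claim to the case where $V$ is one-dimensional by an inductive argument on $\T{Dim}(V)$, using the Lie theorem (Theorem \ref{t:spedec}) to bring $C = (C_1,\ldots,C_n)$ simultaneously into strictly upper triangular form. First I would observe that, since each $C_i$ is nilpotent and the $C_i$'s commute, the Lie theorem applied to $V$ shows $\si(C) = \{0\}$, so there is a basis of $V$ in which every $C_i$ is strictly upper triangular. Choose a one-dimensional $C$-invariant subspace $V_0 \su V$ spanned by a common zero eigenvector (the last statement of Theorem \ref{t:spedec}), and let $W := V/V_0$, with $\overline C$ the induced nilpotent commuting tuple on $W$. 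Tensoring the short exact sequence $0 \to V_0 \to V \to W \to 0$ of $\cc$-vector spaces with $\sH$ and with $\La(\cc^n)$ gives a short exact sequence of Koszul complexes
\[
0 \to K_*(A \ot 1 + 1 \ot C|_{V_0}, \sH \ot V_0) \to K_*(A \ot 1 + 1 \ot C, \sH \ot V) \to K_*(A \ot 1 + 1 \ot \overline C, \sH \ot W) \to 0,
\]
where one uses that $V_0$ is $C$-invariant so that $d$ on the sub/quotient complexes is the restriction/quotient of the ambient differential.

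Next I would invoke the long exact homology sequence associated to this short exact sequence of complexes. Finite dimensionality of the homology of the outer two complexes (by the inductive hypothesis, the base case being dealt with below) forces finite dimensionality of the middle homology, so $A \ot 1 + 1 \ot C$ is Fredholm; moreover the alternating sum of dimensions along the long exact sequence is zero, which yields the additivity
\[
\T{Ind}(A \ot 1 + 1 \ot C) = \T{Ind}(A \ot 1 + 1 \ot C|_{V_0}) + \T{Ind}(A \ot 1 + 1 \ot \overline C).
\]
Since $\T{Dim}(W) = \T{Dim}(V) - 1$, the inductive hypothesis applies to $\overline C$ on $W$, so it remains to handle the one-dimensional case $V = V_0 = \cc$, where $C$ is necessarily the zero tuple. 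In that case $A \ot 1 + 1 \ot C = A \ot 1$ on $\sH \ot \cc \cong \sH$, and the Koszul complex is literally $K_*(A, \sH)$, so $\T{Ind}(A \ot 1 + 1 \ot C) = \T{Ind}(A) = \T{Ind}(A) \cd \T{Dim}(\cc)$, establishing the base case. Combining the base case with the inductive step via $\T{Dim}(V) = 1 + \T{Dim}(W)$ gives
\[
\T{Ind}(A \ot 1 + 1 \ot C) = \T{Ind}(A) + \T{Ind}(A) \cd \T{Dim}(W) = \T{Ind}(A) \cd \T{Dim}(V),
\]
as desired.

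The main obstacle I anticipate is the bookkeeping needed to make the short exact sequence of Koszul complexes genuinely exact with the correct differentials: one must check that $V_0 \su V$ being a $C$-invariant subspace implies that the differential $d_{A \ot 1 + 1 \ot C}$ restricts to $\sH \ot V_0 \ot \La(\cc^n)$ and descends to the quotient, which is where the commutativity of $A$ with the $C_i$'s and the invariance of $V_0$ enter — this is routine but must be stated. An alternative, arguably cleaner, route is to filter $\sH \ot V$ by the flag $\sH \ot V_0 \su \sH \ot V_1 \su \cdots \su \sH \ot V = \sH \ot V$ coming from a full flag of $C$-invariant subspaces of $V$, with each successive quotient isomorphic to $K_*(A, \sH)$, and then either iterate the long exact sequence argument or invoke the spectral sequence of the filtration as in Proposition \ref{p:speind}; in either guise the essential input is simply the additivity of the Euler characteristic (Fredholm index) in short exact sequences, together with the triviality of the nilpotent part on the associated graded. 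I would present the induction version since it is the most self-contained.
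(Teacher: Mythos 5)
Your proposal is correct and follows essentially the same route as the paper: induction on $\T{Dim}(V)$, using the Lie theorem to extract a one-dimensional $C$-invariant subspace spanned by a common zero eigenvector, then applying the long exact homology sequence of the resulting short exact sequence of Koszul complexes and the additivity of the Euler characteristic. The only difference is cosmetic labeling (the paper names the one-dimensional subspace $W$ and quotients by it, while you call it $V_0$ and let $W$ be the quotient).
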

\begin{proof}
We argue by induction on the dimension of $V$. 

Suppose that $\T{Dim}(V) = 1$. This implies that $C$ is trivial. Let $\xi \in V$ be a non-trivial vector. The isomorphism of Hilbert space $\sH \ot V \cong \sH$ given by $\eta \ot \al \cd \xi \mapsto \al \cd \eta$ then induces an isomorphism of Koszul homology groups $H_*(A \ot 1, \sH \ot V) \cong H_*
(A,\sH)$. This proves the statement in this case.

Suppose that the statement is true for $\T{Dim}(V) = k$ for some $k \geq 1$ and suppose that $\T{Dim}(V) = k+1$. Since $C$ is nilpotent on $V$ there exists a non-trivial vector $\xi \in V$ with $C_i(\xi) = 0$ for all $i \in \{1,\ldots,n\}$. See Theorem \ref{t:spedec}. Let $W := \cc \cd \xi \su V$ denote the span of $\xi$ in $V$. There is a long exact sequence of homology groups 
\[
\begin{CD}
\ldots @>>> H_{*+1}(D,\sH \ot V/W) @>>> H_*(D,\sH \ot W) @>>>
H_*(D,\sH \ot V) \\
& & & & & & @VVV \\
& &  \ldots @<<< H_{*-1}(D,\sH \ot W) @<<< H_*(D,\sH \ot V/W)
\end{CD}
\]
where $D := A \ot 1 + 1 \ot C$. By the induction hypothesis and the induction start we get that $\T{Ind}(D|_{\sH \ot W}) = \T{Ind}(A)$ and $\T{Ind}(D|_{\sH \ot V/W}) = \T{Dim}(V/W) \cd \T{Ind}(A)$. This proves the induction step by the additivity of the Euler characteristic.
\end{proof}

%
%
%
%
%

The following result shows that the passage from $\sH$ to the tensor product $\sH \hot \sG$ can be used as an analytic counterpart of the algebraic localization at the zeroes of a holomorphic function as far as index computations are concerned. This will turn out to be important for the proof of our local index theorem.

%
\begin{prop}\label{p:locana}
Let $g : \T{Sp}(A) \to \cc^m$ be holomorphic. Suppose that $g(A)$ is Fredholm and that $\la - B$ is Fredholm for all $\la \in Z(g)$. Then the commuting tuple $(A \ot 1 - 1 \ot B) \op (g(A) \ot 1)$ is Fredholm on $\sH \hot \sG$ and the index is given by
\[
\T{Ind}\big( (A \ot 1 - 1 \ot B) \op (g(A) \ot 1)\big) = 
-\sum_{\la \in Z(g)} \T{Ind}_\la(g(A)) \cd \T{Ind}(\la - B).
\]
\end{prop}
\begin{proof}
By Proposition \ref{p:speind} it is enough to show that $H_p\big(A \ot 1 - 1 \ot B, H_q(g(A) \ot 1,\sK  \big)$ is finite dimensional for all $p \in \{0,\ldots,n\}$ and $q \in \{0,\ldots,m\}$ and that
\begin{equation}\label{eq:etwoind}
\begin{split}
& \sum_{p,q} (-1)^{p+q+1} \T{Dim}_{\cc}H_p\big( A \ot 1 - 1 \ot B, H_q( g(A) \ot 1,\sK) \big) \\
& \q = -\sum_{\la \in Z(g)} \T{Ind}_\la(g(A)) \cd \T{Ind}(\la - B).
\end{split}
\end{equation}

Let us fix a $q \in \{0,\ldots,m\}$. Note that $H_q(g(A) \ot 1,\sK) \cong H_q(g(A),\sH) \ot \sG$. To see this, it suffices to recall that the functor $\cd \, \hot \sG$ sends exact sequences of Hilbert spaces to exact sequences of Hilbert spaces. Furthermore, the image of the differential of the Koszul complex $K(g(A),\sH)$ is closed by the Fredholmness assumption on $g(A)$.

Using the Fredholmness of $g(A)$ one more time, we can decompose $H_q(g(A),\sH)$ into the generalized eigenspaces
\[
H_q(g(A),\sH) \cong \bop_{\la \in Z(g)} H_q(g(A),\sH)(\la).
\]
for the commuting tuple $H_q(A)$.

Let $\la \in Z(g)$. Since $\la - B$ is Fredholm on $\sG$ and $H_q(A)(\la) - \la$ is nilpotent on $H_q(g(A),\sH)(\la)$ we get from Lemma \ref{l:niliso} that
\[
\begin{split}
\T{Ind}\big( H_q(A)(\la) \ot 1 - 1 \ot B \big) 
& = \T{Ind}\Big( \big(H_q(A)(\la) - \la \big) \ot 1 + 1 \ot (\la - B) \Big) \\
& = \T{Dim}_{\cc}\big( H_q(g(A),\sH)(\la) \big) \cd \T{Ind}(\la - B)
\end{split}
\] 
where $H_q(A)(\la) \ot 1 - 1 \ot B$ acts on the Hilbert space $H_q(g(A),\sH)(\la) \ot \sG$.

Combining the above achievements we thus get that
\[
\begin{split}
& \sum_q (-1)^q \cd \T{Ind}\big( H_q(A) \ot 1 - 1 \ot B\big) \\
& \q = \sum_{\la\in Z(g)} \sum_q (-1)^q \T{Ind}\big( H_q(A)(\la) \ot 1 - 1 \ot  B \big)  \\
& \q = \sum_{\la \in Z(g)}\sum_q (-1)^q \T{Dim}_{\cc} \big( H_q(g(A),\sH)(\la) \big) \cd \T{Ind}(\la - B) \\
& = -\sum_{\la \in Z(g)} \T{Ind}_\la(g(A)) \cd \T{Ind}(\la -B)
\end{split}
\]
where $H_q(A) \ot 1 - 1 \ot B$ acts on the Hilbert space $H_q(g(A),\sH) \ot \sG$. But this is equivalent to the identity in \eqref{eq:etwoind} and the proposition is proved.
\end{proof}

We are now in position to prove the reciprocity formula for local indices.

\begin{theorem}\label{t:respro}
Let $g : \T{Sp}(A) \cup \T{Sp}(B) \to \cc^m$ be holomorphic. Suppose that $g(A)$ and $g(B)$ are Fredholm and that the sets $Z(g) \cap \T{Sp}(A) \cap \T{Sp}_{\T{ess}}(B)$ and $Z(g) \cap \T{Sp}(B) \cap \T{Sp}_{\T{ess}}(A)$ are empty.
We then have the identity
\[
\sum_{\mu \in Z(g) \cap \T{Sp}(B)}\T{Ind}(\mu -A) \cd \T{Ind}_\mu(g(B))
= \sum_{\la \in Z(g) \cap \T{Sp}(A)} \T{Ind}_\la(g(A)) \cd \T{Ind}(\la - B).
\]
\end{theorem}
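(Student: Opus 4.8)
The plan is to deduce the reciprocity formula from Proposition \ref{p:locana} applied twice, once to the ordered pair $(A,B)$ and once to $(B,A)$, and then to identify the two resulting Fredholm indices by a symmetry argument. First I would record the bookkeeping: $Z(g) \cap \T{Sp}(A)$ is exactly the zero set attached to the restriction $g : \T{Sp}(A) \to \cc^m$, and it is finite by Theorem \ref{t:equzerspe}, and similarly for $\T{Sp}(B)$. By Theorem \ref{t:essspe} the hypothesis $Z(g) \cap \T{Sp}(A) \cap \T{Sp}_{\T{ess}}(B) = \emptyset$ says precisely that $\la - B$ is Fredholm for every $\la \in Z(g) \cap \T{Sp}(A)$, so, since $g(A)$ is Fredholm, Proposition \ref{p:locana} applies to $(A,B)$ and gives
\[
\T{Ind}\big( (A \ot 1 - 1 \ot B) \op (g(A) \ot 1) \big) = -\sum_{\la \in Z(g) \cap \T{Sp}(A)} \T{Ind}_\la(g(A)) \cd \T{Ind}(\la - B),
\]
with the left-hand tuple acting on $\sH \hot \sG$. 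Interchanging the roles of $A$ and $B$ and using the other vanishing hypothesis together with the Fredholmness of $g(B)$, Proposition \ref{p:locana} applied to $(B,A)$ yields
\[
\T{Ind}\big( (B \ot 1 - 1 \ot A) \op (g(B) \ot 1) \big) = -\sum_{\mu \in Z(g) \cap \T{Sp}(B)} \T{Ind}_\mu(g(B)) \cd \T{Ind}(\mu - A),
\]
this time on $\sG \hot \sH$.

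The remaining task, and the only point requiring care, is to show that these two indices agree. I would use the flip unitary $\si : \sH \hot \sG \to \sG \hot \sH$, which intertwines $A \ot 1 - 1 \ot B$ with $1 \ot A - B \ot 1 = -(B \ot 1 - 1 \ot A)$ and intertwines $g(A) \ot 1$ with $1 \ot g(A)$. Since negating the first $n$ entries of a commuting tuple does not change its Koszul homology (rescaling the exterior generators $e_1,\ldots,e_n$ by $-1$ is an isomorphism of Koszul chain complexes), we obtain a graded isomorphism
\[
H_*\big( (A \ot 1 - 1 \ot B) \op (g(A) \ot 1), \sH \hot \sG \big) \cong H_*\big( (B \ot 1 - 1 \ot A) \op (1 \ot g(A)), \sG \hot \sH \big).
\]
Applying Lemma \ref{l:isodif} with $A$ and $B$ interchanged and with $C$ the empty tuple, the right-hand side is in turn isomorphic to $H_*\big( (B \ot 1 - 1 \ot A) \op (g(B) \ot 1), \sG \hot \sH \big)$. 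Hence the tuples $(A \ot 1 - 1 \ot B) \op (g(A) \ot 1)$ and $(B \ot 1 - 1 \ot A) \op (g(B) \ot 1)$ have the same Fredholm index, and comparing the two displayed formulas and multiplying through by $-1$ produces the asserted identity.

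I expect the main obstacle to be conceptual rather than computational: all the analytic substance sits in Proposition \ref{p:locana} (and behind it Lemma \ref{l:niliso} and the analytic localization), so the work here is just to make the symmetry watertight — namely to check that flipping the two Hilbert space factors, changing the sign of $n$ of the operators, and using Lemma \ref{l:isodif} to trade $1 \ot g(A)$ for $g(B) \ot 1$ all leave the Koszul homology, and therefore the Euler characteristic, unchanged, and to verify that the two vanishing hypotheses are exactly what is needed to license the two applications of Proposition \ref{p:locana} via Theorem \ref{t:essspe}.
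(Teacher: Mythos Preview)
Your argument is correct and matches the paper's proof in substance: both apply Proposition \ref{p:locana} twice and then identify the two resulting indices via Lemma \ref{l:isodif}. The only cosmetic difference is that the paper absorbs the flip-and-sign step into the second invocation of Proposition \ref{p:locana} (writing the second identity directly on $\sH \hot \sG$ as $\T{Ind}\big( (A \ot 1 - 1 \ot B) \op (1 \ot g(B))\big)$) and then applies Lemma \ref{l:isodif} on $\sH \hot \sG$, whereas you make the flip explicit and apply Lemma \ref{l:isodif} on $\sG \hot \sH$; the two routes are interchangeable.
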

\begin{proof}
By an application of Proposition \ref{p:locana} we get the identities
\[
\begin{split}
& \T{Ind}\big( (A \ot 1 - 1 \ot B) \op (g(A) \ot 1)\big) = 
-\sum_{\la \in Z(g) \cap \T{Sp}(A)} \T{Ind}_\la(g(A)) \cd \T{Ind}(\la - B)
\, \, \, \,  \T{and} \\
& \T{Ind}\big( (A \ot 1 - 1 \ot B) \op (1 \ot g(B))\big) = 
-\sum_{\mu \in Z(g) \cap \T{Sp}(B)} \T{Ind}(\mu - A) \cd \T{Ind}_\mu(g(B)) 
\end{split}
\]
But this entails the result of the theorem since
\[
\T{Ind}\big( (A \ot 1 - 1 \ot B) \op (g(A) \ot 1)\big) = 
\T{Ind}\big( (A \ot 1 - 1 \ot B) \op (1 \ot g(B))\big)
\]
by an application of Lemma \ref{l:isodif}.
\end{proof}

\section{The local index theorem}\label{s:locind}
Throughout this section the following conditions will be in effect:
\bigskip

\noindent \emph{Let $g : \T{Sp}(A) \to \cc^m$ be a holomorphic map on the Taylor spectrum of the commuting tuple $A=(A_1,\ldots,A_n)$ of bounded operators such that $g(A)$ is Fredholm.}
\bigskip

\noindent \emph{For each $\ep > 0$ and each $\la \in \cc^n$, let $\sG_\ep(\la)$ be a Hilbert space and let $B = (B_1,\ldots,B_n)$ be a commuting tuple of bounded operators on $\sG_\ep(\la)$ such that the following conditions are satisfied:}
\emph{
\begin{enumerate}
\item The Taylor spectrum of $B$ is included in the closed ball $\T{cl}\big( \B B_{\ep}(\la)\big)$ in $\cc^n$ with radius $\ep > 0$ and center $\la \in \cc^n$.
\item The commuting tuple $B - \la$ is Fredholm with $\T{Ind}(B - \la) = -1$.
\end{enumerate}}

An example of a pair $\big(\sG_\ep(\la),B\big)$ with the above properties consists of the Bergman space $H^2(B_\ep(\mu))$ and the commuting tuple $T_z = (T_{z_1},\ldots,T_{z_n})$ of Toeplitz operators associated with the coordinate functions. See Theorem \ref{t:pseudo}.

\begin{lemma}\label{l:analocpoi}
Let $\la \in \cc^n$ and let $\ep > 0$. Suppose that $\T{cl}\big( \B B_\ep(\la) \big) \cap Z(g) \su \{\la\}$. Then the commuting tuple
$\big( (A \ot 1 - 1 \ot B) \op (g(A) \ot 1) \big)$ is Fredholm on $\sH \hot \sG_\ep(\la)$ with index
\[
\T{Ind}\big( (A \ot 1 - 1 \ot B) \op (g(A) \ot 1) \big) = \T{Ind}_\la(g(A)).
\]
\end{lemma}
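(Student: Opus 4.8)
The plan is to apply Proposition \ref{p:locana} directly to the pair $(A,B)$ with the holomorphic map $g$, but with $B$ now acting on $\sG_\ep(\la)$, and then to show that the resulting sum over $Z(g)$ collapses to a single term. First I would check the hypotheses of Proposition \ref{p:locana}: we need $g(A)$ Fredholm (this is our standing assumption) and $\mu - B$ Fredholm for all $\mu \in Z(g)$. For the latter, by Theorem \ref{t:essspe} it suffices that $\mu \notin \T{Sp}_{\T{ess}}(B)$; since $\T{Sp}(B) \su \T{cl}(\B B_\ep(\la))$ by condition (1), and the hypothesis $\T{cl}(\B B_\ep(\la)) \cap Z(g) \su \{\la\}$ forces every $\mu \in Z(g)$ other than $\la$ to lie outside $\T{Sp}(B)$ entirely, hence outside $\T{Sp}_{\T{ess}}(B)$; for $\mu = \la$ we have $\T{Ind}(B - \la) = -1$ by condition (2), which in particular means $\la - B$ is Fredholm. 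So the hypotheses of Proposition \ref{p:locana} hold.

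Next I would invoke Proposition \ref{p:locana} to get that $(A \ot 1 - 1 \ot B) \op (g(A) \ot 1)$ is Fredholm on $\sH \hot \sG_\ep(\la)$ with
\[
\T{Ind}\big( (A \ot 1 - 1 \ot B) \op (g(A) \ot 1)\big) = -\sum_{\mu \in Z(g)} \T{Ind}_\mu(g(A)) \cd \T{Ind}(\mu - B).
\]
Now I would analyze the sum. For $\mu \in Z(g)$ with $\mu \neq \la$: by the hypothesis $\T{cl}(\B B_\ep(\la)) \cap Z(g) \su \{\la\}$ together with $\T{Sp}(B) \su \T{cl}(\B B_\ep(\la))$, we have $\mu \notin \T{Sp}(B)$, so $\mu - B$ is invertible and $\T{Ind}(\mu - B) = 0$; hence these terms vanish. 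For $\mu = \la$: we have $\T{Ind}(\la - B) = -\T{Ind}(B - \la) = -(-1) = 1$ by condition (2) (using that passing from $B - \la$ to $\la - B$ multiplies each $B_i - \la_i$ by $-1$, which induces an isomorphism on Koszul homology and leaves the index unchanged up to the sign convention — I would spell out that $\T{Ind}(\la - B) = (-1)^n \cdot(\pm) \T{Ind}(B-\la)$ reduces correctly, but in fact scaling all coordinates by $-1$ gives a graded isomorphism of Koszul complexes so $\T{Ind}(\la - B) = \T{Ind}(B - \la)$... ). This is the one point requiring care, so let me be precise: the chain map sending $\xi \ot e_I \mapsto (-1)^{|I|}\xi \ot e_I$ conjugates $d_{B-\la}$ into $d_{\la - B} = -d_{B-\la}$, which is a genuine chain isomorphism, hence $H_*(\la - B, \sG_\ep(\la)) \cong H_*(B - \la, \sG_\ep(\la))$ and $\T{Ind}(\la - B) = \T{Ind}(B - \la) = -1$.

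With that settled, the sum reduces to the single term $\mu = \la$, giving
\[
\T{Ind}\big( (A \ot 1 - 1 \ot B) \op (g(A) \ot 1)\big) = -\T{Ind}_\la(g(A)) \cd \T{Ind}(\la - B) = -\T{Ind}_\la(g(A)) \cd (-1) = \T{Ind}_\la(g(A)),
\]
which is exactly the claimed identity. The main obstacle is really just the bookkeeping with the sign of $\T{Ind}(\la - B)$ versus $\T{Ind}(B - \la)$ and making sure the hypothesis $\T{cl}(\B B_\ep(\la)) \cap Z(g) \su \{\la\}$ is used in both places it is needed (to kill the off-diagonal terms in the sum and to guarantee the Fredholmness of $\mu - B$ for $\mu \neq \la$); everything else is a direct citation of Proposition \ref{p:locana}, Theorem \ref{t:essspe}, and the standing conventions on Koszul homology.
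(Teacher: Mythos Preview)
Your proposal is correct and follows essentially the same route as the paper: apply Proposition \ref{p:locana}, then observe that the terms with $\mu \neq \la$ vanish because $\mu \notin \T{Sp}(B)$, while the $\mu = \la$ term contributes $\T{Ind}_\la(g(A))$. The only difference is cosmetic: the paper simply asserts $\T{Ind}(\la - B) = -1$ without discussing the sign (implicitly using that negating all coordinates gives an isomorphic Koszul complex), whereas you spell this out; also, the paper explicitly separates the cases $\la \in Z(g)$ and $\la \notin Z(g)$, while your formulation absorbs the latter into the convention $\T{Ind}_\la(g(A)) = 0$.
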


Note that $\T{Ind}_\la(g(A)) = 0$ whenever $\la \notin Z(g)$.

\begin{proof}
It follows from Proposition \ref{p:locana} and the conditions on $\big(\sG_\ep(\la),B \big)$ that $(A \ot 1 - 1 \ot B) \op (g(A) \ot 1)$ is Fredholm on $\sH \hot \sG_\ep(\la)$ with index
\[
\T{Ind}\big( (A \ot 1 - 1 \ot B) \op (g(A) \ot 1) \big)
= - \sum_{\mu \in Z(g)} \T{Ind}_\mu(g(A)) \cd \T{Ind}(\mu - B).
\]
Since $\T{Ind}(\mu - B) = 0$ for all $\mu \notin \T{cl}\big( \B B_\ep(\la) \big)$ and $\T{Ind}(\la - B) = - 1$ it follows that
\[
- \sum_{\mu \in Z(g)} \T{Ind}_\mu(g(A)) \cd \T{Ind}(\mu - B)
= \fork{ccc}{
\T{Ind}_\la(g(A)) & \T{for} & \la \in Z(g) \\
0 & \T{for} & \la \notin Z(g).
}
\]
This proves the lemma.
\end{proof}

\begin{notation}
Let $h : \T{Sp}(A) \ti \cc^n \to \cc^{n+m}$ denote the holomorphic map defined by $h(z,w) = (z-w,g(z))$.
\end{notation}

\begin{remark}\label{r:commap}
Let $B = (B_1,\ldots,B_n)$ be an arbitrary commuting tuple on a Hilbert space $\sG$. It follows from Lemma \ref{l:tenfun} that $h\big( (A \ot 1) \op (1 \ot B)\big) = (A \ot 1 - 1 \ot B) \op (g(A) \ot 1)$.
\end{remark}

\begin{theorem}
Suppose that $m > n$. Then $\T{Ind}_\la(g(A)) = 0$ for all $\la \in \cc^n$.
\end{theorem}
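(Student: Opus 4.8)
The plan is to reduce the claim to the vanishing half of the global index theorem (Theorem~\ref{t:inthetri}) applied to an enlarged commuting tuple, passing through the analytic localization of Lemma~\ref{l:analocpoi}.

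Fix $\la \in \cc^n$. Since $g(A)$ is Fredholm, Theorem~\ref{t:equzerspe} shows that $Z(g)$ is finite, so one may choose $\ep > 0$ small enough that $\T{cl}\big(\B B_\ep(\la)\big) \cap Z(g) \su \{\la\}$; let $\big(\sG_\ep(\la), B\big)$ be the pair fixed by the standing hypotheses of this section. By Lemma~\ref{l:analocpoi} the commuting tuple $(A \ot 1 - 1 \ot B) \op (g(A) \ot 1)$ is Fredholm on $\sH \hot \sG_\ep(\la)$ with
\[
\T{Ind}\big( (A \ot 1 - 1 \ot B) \op (g(A) \ot 1) \big) = \T{Ind}_\la(g(A)).
\]
If $\la \notin Z(g)$ both sides already vanish, so the content lies in the case $\la \in Z(g)$.

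Next I would identify the left-hand tuple as the image of a symbol under the holomorphic functional calculus. By Remark~\ref{r:commap}, $(A \ot 1 - 1 \ot B) \op (g(A) \ot 1) = h(C)$, where $C := (A \ot 1) \op (1 \ot B)$ is a commuting tuple of length $2n$ on $\sH \hot \sG_\ep(\la)$ and $h(z,w) = (z - w, g(z))$ is holomorphic on $\T{Sp}(A) \ti \cc^n$, hence on $\T{Sp}(C)$, since $\T{Sp}(C) \su \T{Sp}(A) \ti \T{Sp}(B)$ by Lemma~\ref{l:tenfun}. The symbol $h$ has $n + m$ coordinates, and since $m > n$ we have $n + m > 2n$, the length of $C$. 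Applying Theorem~\ref{t:inthetri} to the tuple $C$ and the symbol $h$ — whose hypotheses are precisely that $h(C)$ is Fredholm, which we have just established, and that the number of symbol coordinates strictly exceeds the length of the tuple — yields $\T{Ind}(h(C)) = 0$, and hence $\T{Ind}_\la(g(A)) = 0$.

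I expect the only delicate point to be the bookkeeping in the substitution into Theorem~\ref{t:inthetri}: the role of its ``$n$'' is now played by $2n$ and that of its ``$m$'' by $n+m$, so that its hypothesis ``$m > n$'' reads ``$n + m > 2n$'', which is exactly our standing assumption $m > n$; one also has to confirm that $h$ qualifies as a symbol for $C$, i.e. is holomorphic on a neighbourhood of $\T{Sp}(C)$. There is no genuine analytic obstacle, since the substantive work has already been done in Proposition~\ref{p:locana}, Lemma~\ref{l:analocpoi} and Theorem~\ref{t:inthetri}.
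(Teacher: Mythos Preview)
Your proof is correct and follows essentially the same route as the paper: choose $\ep$ so that $\T{cl}(\B B_\ep(\la))\cap Z(g)\subseteq\{\la\}$, invoke Lemma~\ref{l:analocpoi} and Remark~\ref{r:commap} to write $\T{Ind}_\la(g(A))$ as the index of $h(C)$ with $C=(A\otimes 1)\oplus(1\otimes B)$, and then apply Theorem~\ref{t:inthetri} using $n+m>2n$. Your added remarks (the trivial case $\la\notin Z(g)$, and the check via Lemma~\ref{l:tenfun} that $h$ is defined on a neighbourhood of $\T{Sp}(C)$) are correct elaborations but not departures from the paper's argument.
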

\begin{proof}
Since $g(A)$ is Fredholm, the set $Z(g)$ is finite. Choose an $\ep > 0$ such that $\T{cl}\big(\B B_\ep(\la)\big) \cap Z(g) \su \{\la\}$. The preceding remark and Lemma \ref{l:analocpoi} imply that $h\big( (A \ot 1) \op (1 \ot B) \big)$ is Fredholm with 
\[
\T{Ind}\big( h\big( (A \ot 1) \op (1 \ot B) \big)\big) = \T{Ind}_\la(g(A)).
\]
But Theorem \ref{t:inthetri} implies that
\[
\T{Ind}\big( h\big( (A \ot 1) \op (1 \ot B) \big) \big) = 0
\]
since $m+n > 2n$. This proves the theorem.
\end{proof}

Recall that $\Om := \T{Int}(\T{Sp}(A))$ denotes the interior of the Taylor spectrum of $A$.

\begin{lemma}\label{l:degcom}
Suppose that $\la \in \Om \cap Z(g)$ and that $n=m$. Then $\T{deg}_\la(g) = \T{deg}_{(\la,\la)}(h)$.
\end{lemma}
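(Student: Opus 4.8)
The plan is to express both local degrees as dimensions of local analytic quotient rings and then identify these rings algebraically. First I would invoke Lemma~\ref{l:locdeg} for $g$ to get $\T{deg}_\la(g) = \T{Dim}_\cc\big(\C O_\la/g\C O_\la\big)$, and then apply the same result --- i.e.\ \cite[Proposition~2.4]{EiLe:ADG} --- to the holomorphic map $h$, regarded as a map from a neighborhood of $(\la,\la)$ in $\cc^{2n}$ (with coordinates $(z,w)$) to $\cc^{2n}$. To legitimize this I first check that $(\la,\la)$ is an isolated point of $Z(h)$: since $g(A)$ is Fredholm, $Z(g)$ is finite by Theorem~\ref{t:equzerspe}, and $h(z,w)=0$ precisely when $w=z\in Z(g)$; moreover $(\la,\la)$ lies in $\Om\ti\cc^n = \T{Int}\big(\T{Sp}(A)\ti\cc^n\big)$, so $\T{deg}_{(\la,\la)}(h)$ is defined and, by \cite[Proposition~2.4]{EiLe:ADG}, equals $\T{Dim}_\cc\big(\C O_{(\la,\la)}/h\C O_{(\la,\la)}\big)$, where $h\C O_{(\la,\la)}$ denotes the ideal generated by the $2n$ components $z_1-w_1,\ldots,z_n-w_n,g_1(z),\ldots,g_n(z)$ of $h$.

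It then remains to build a $\cc$-linear isomorphism $\C O_{(\la,\la)}/h\C O_{(\la,\la)} \cong \C O_\la/g\C O_\la$. I would use the restriction-to-the-diagonal ring homomorphism $\rho:\C O_{(\la,\la)}\to\C O_\la$, $(\rho f)(z):=f(z,z)$. It is plainly surjective, and the key point is that $\ker\rho$ is the ideal $I_0$ generated by $z_1-w_1,\ldots,z_n-w_n$; the nontrivial inclusion follows from the Hadamard-type factorization $f(z,w)=f(z,z)+\sum_{i=1}^n(w_i-z_i)\,f_i(z,w)$ with $f_i\in\C O_{(\la,\la)}$, obtained by integrating the $t$-derivative of $f\big(z,z+t(w-z)\big)$ over $t\in[0,1]$. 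Hence $\rho$ descends to an isomorphism $\C O_{(\la,\la)}/I_0\cong\C O_\la$, and since $\rho$ sends $g_j(z)$ to $g_j$, it carries the image of the ideal generated by $g_1(z),\ldots,g_n(z)$ onto $g\C O_\la$; therefore $\rho$ descends further to $\C O_{(\la,\la)}/h\C O_{(\la,\la)}\cong\C O_\la/g\C O_\la$. Taking dimensions and combining with the two applications of Lemma~\ref{l:locdeg} yields $\T{deg}_{(\la,\la)}(h)=\T{deg}_\la(g)$.

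The Hadamard factorization and the descent of $\rho$ through the quotients are routine verifications. The one step needing a little care --- and the main obstacle, such as it is --- is confirming that \cite[Proposition~2.4]{EiLe:ADG} applies to $h$ exactly as it does to $g$: one must be sure that $(\la,\la)$ is an isolated zero of $h$ lying in the interior of the relevant domain and that $\C O_{(\la,\la)}/h\C O_{(\la,\la)}$ is finite dimensional. The last property is automatic once the isomorphism above is established, since $\C O_\la/g\C O_\la$ is finite dimensional for $\la\in Z(g)\cap\Om$ --- this is exactly what makes $\T{deg}_\la(g)$ a (finite) natural number in Lemma~\ref{l:locdeg}.
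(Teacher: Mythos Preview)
Your proof is correct and follows essentially the same route as the paper: both reduce the equality of degrees to an isomorphism $\C O_{(\la,\la)}/h\C O_{(\la,\la)}\cong\C O_\la/g\C O_\la$ via restriction to the diagonal, invoking Lemma~\ref{l:locdeg} on each side. The only cosmetic difference is that the paper identifies $\C O_{(\la,\la)}/(z-w)\C O_{(\la,\la)}\cong\C O_\la$ by exhibiting the explicit inverse $f\mapsto f\circ p_n$ (composition with the first projection), whereas you verify $\ker\rho=(z-w)$ via a Hadamard factorization; these are equivalent ways of establishing the same elementary fact.
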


Note that $h(z,w) = 0 \lrar \big( z = w \T{ and } g(z) = 0 \big)$, thus $Z(h) = \{(\mu,\mu) \, | \, \mu \in Z(g)\}$ is a finite set. It follows in particular that $\T{deg}_{(\la,\la)}(h)$ is well-defined.

\begin{proof}
Recall from Lemma \ref{l:locdeg} that $\T{deg}_{(\la,\la)}(h) = \C O_{(\la,\la)}/h\C O_{(\la,\la)}$ where $h \C O_{(\la,\la)}$ denotes the ideal generated by the coordinates of $h$.

The map $\al : \C O_{(\la,\la)}/(z-w) \C O_{(\la,\la)} \to \C O_\la$ given by $f \mapsto f \ci \De$, where $\De : \cc^n \to \cc^{2n}$ is the diagonal map $\De(z) := (z,z)$, is an isomorphism of unital algebras over $\cc$. The inverse is given by $f \mapsto f \ci p_n$, where $p_n : \cc^{2n} \to \cc^n$ is the projection onto the first $n$ factors.

It follows in particular that $\al$ induces an isomorphism between the unital $\cc$-algebras
\[
\C O_{(\la,\la)}/h\C O_{(\la,\la)} = \C O_{(\la,\la)}/\big( (z-w) \C O_{(\la,\la)} + (g \ci p_n) \C O_{(\la,\la)}\big) \,\,\,\, \T{and} \,\,\,\, \C O_\la/g\C O_\la. 
\]
This proves the lemma.


\end{proof}

The main result of this paper can now be stated and proved. It expresses the local index at a point in terms of the local degree of the symbol $g$ and the locally constant index function of the variables $A$.

\begin{theorem}\label{t:localind}
Suppose that $g(A)$ is Fredholm, that $n=m$, and that $\la \in Z(g)$. The local index at $\la$ is then given by
\[
\T{Ind}_\la(g(A)) = \fork{ccc}{
0 & \T{for} & \la \in \pa\big( \T{Sp}(A) \big) \\
\T{deg}_\la(g) \cd \T{Ind}(\la - A) & \T{for} & \la \in \T{Int}\big( \T{Sp}(A) \big).
}
\]
\end{theorem}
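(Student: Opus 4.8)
The plan is to express $\T{Ind}_\la(g(A))$ as a global index for the auxiliary tuple $C := (A \ot 1) \op (1 \ot B)$ acting on $\sK := \sH \hot \sG_\ep(\la)$, and then to read off the answer from the global index theorem. Since $Z(g)$ is finite (Theorem \ref{t:equzerspe}), one may fix $\ep > 0$ with $\T{cl}\big(\B B_\ep(\la)\big) \cap Z(g) \su \{\la\}$. By Lemma \ref{l:analocpoi} the tuple $(A \ot 1 - 1 \ot B) \op (g(A) \ot 1)$ is Fredholm on $\sK$ with index $\T{Ind}_\la(g(A))$, and by Remark \ref{r:commap} it equals $h(C)$, where $h(z,w) = (z-w,g(z))$. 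Hence $\T{Ind}_\la(g(A)) = \T{Ind}(h(C))$. As $\T{Sp}(C) \su \T{Sp}(A) \ti \cc^n$ by Lemma \ref{l:tenfun}, the map $h$ restricts to a germ of holomorphic functions on $\T{Sp}(C)$, and $C$ together with $h$ has equal length $2n$, so Theorem \ref{t:indthe} applies to the pair $(C,h)$ and gives
\[
\T{Ind}_\la(g(A)) = \T{Ind}(h(C)) = \sum_{\ze \in h^{-1}(0) \cap \T{Int}(\T{Sp}(C))} \T{deg}_{\ze}(h) \cd \T{Ind}(C - \ze).
\]

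I would then pin down this sum. Every zero of $h$ has the form $(\mu,\mu)$ with $\mu \in Z(g)$, and by Lemma \ref{l:tenfun} and condition (1) one has $\T{Sp}(C) \su \T{Sp}(A) \ti \T{cl}\big(\B B_\ep(\la)\big)$; by the choice of $\ep$ the only admissible $\mu$ is $\la$. So the sum has at most the single term $\ze = (\la,\la)$, and it is present precisely when $(\la,\la) \in \T{Int}(\T{Sp}(C))$. When it is present, the inclusion $\T{Int}(\T{Sp}(C)) \su \T{Int}(\T{Sp}(A)) \ti \T{Int}(\T{Sp}(B))$ (Lemma \ref{l:tenfun} together with the fact that the interior of a product is the product of the interiors) shows $\la \in \T{Int}(\T{Sp}(A))$, so Lemma \ref{l:degcom} applies and yields $\T{deg}_{(\la,\la)}(h) = \T{deg}_\la(g)$, while Lemma \ref{l:indproind} and condition (2) give
\[
\T{Ind}(C - (\la,\la)) = -\T{Ind}(A-\la) \cd \T{Ind}(B-\la) = \T{Ind}(A-\la) = \T{Ind}(\la - A),
\]
the last equality because the Koszul complexes of $A - \la$ and $\la - A$ differ only by the sign of the differential. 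Thus $\T{Ind}_\la(g(A))$ is either $0$, which happens exactly when $(\la,\la) \notin \T{Int}(\T{Sp}(C))$, or $\T{deg}_\la(g) \cd \T{Ind}(\la - A)$.

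It remains to match these two possibilities with the boundary/interior dichotomy. If $\la \in \pa(\T{Sp}(A))$, the same inclusion $\T{Int}(\T{Sp}(C)) \su \T{Int}(\T{Sp}(A)) \ti \T{Int}(\T{Sp}(B))$ forces $(\la,\la) \notin \T{Int}(\T{Sp}(C))$, so the sum is empty and $\T{Ind}_\la(g(A)) = 0$, as claimed. If $\la \in \T{Int}(\T{Sp}(A))$, the desired value is $\T{deg}_\la(g) \cd \T{Ind}(\la - A)$, and this agrees with both possibilities above once one knows that $(\la,\la) \in \T{Int}(\T{Sp}(C))$ \emph{whenever} $\T{Ind}(\la - A) \neq 0$ (if $\T{Ind}(\la - A) = 0$ both possibilities equal $0$ and there is nothing to check). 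This implication is the main obstacle, and I would prove it as follows. Because $\la \in Z(g)$ and $g(A)$ is Fredholm one has $\la \notin \T{Sp}_{\T{ess}}(A)$, so $A - \la$ is Fredholm (Theorem \ref{t:essspe}); condition (2) says $B - \la$ is Fredholm with index $-1$; hence by Lemma \ref{l:indproind} the tuple $C - (\la,\la)$ is Fredholm with index $\T{Ind}(\la - A) \neq 0$. Since $\T{Sp}_{\T{ess}}(C)$ is closed and misses $(\la,\la)$, Theorem \ref{t:essspe} shows $C - \ze$ is Fredholm for all $\ze$ in a connected open neighbourhood $U$ of $(\la,\la)$; by homotopy invariance of the Fredholm index \cite[Theorem 3]{Cur:FOD} this index is constantly $\T{Ind}(\la - A) \neq 0$ on $U$, so no $\ze \in U$ is invertible and therefore $U \su \T{Sp}(C)$, i.e. $(\la,\la) \in \T{Int}(\T{Sp}(C))$. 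Collecting the cases gives the stated formula; apart from this localization of the interior condition from $A$ to $C$, the argument is bookkeeping with the index, degree and spectral identities already in place.
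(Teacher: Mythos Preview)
Your argument is essentially the paper's own proof: pass to $C = (A\ot 1)\op(1\ot B)$ on $\sH\hot\sG_\ep(\la)$, identify $\T{Ind}_\la(g(A))$ with $\T{Ind}(h(C))$ via Lemma~\ref{l:analocpoi} and Remark~\ref{r:commap}, apply the global index theorem to $h(C)$, and then evaluate the single possible summand using Lemmas~\ref{l:degcom} and~\ref{l:indproind}. The paper simply asserts that $W\cap Z(h)=\{(\la,\la)\}$ when $\la\in\Om$, whereas Lemma~\ref{l:tenfun} only yields the inclusion $W\su\Om\times\B B_\ep(\la)$ and hence only $W\cap Z(h)\su\{(\la,\la)\}$; you noticed this and supplied the missing justification by showing that if $\T{Ind}(\la-A)\neq 0$ then the nonvanishing Fredholm index of $C-(\la,\la)$ forces an open neighbourhood of $(\la,\la)$ into $\T{Sp}(C)$, while if $\T{Ind}(\la-A)=0$ the claimed formula is $0$ regardless. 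That extra care is correct and actually patches a small gap in the paper's write-up; otherwise the two proofs coincide.
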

\begin{proof}
Choose an $\ep > 0$ such that $\T{cl}\big( \B B_\ep(\la)\big) \cap Z(g) = \{\la\}$. It follows by Lemma \ref{l:analocpoi} and Remark \ref{r:commap} that $h\big( (A \ot 1) \op (1 \ot B) \big)$ is Fredholm on $\sH \hot \sG_\ep(\la)$ with
\[
\T{Ind}\big( h( (A \ot 1) \op (1 \ot B))\big) = \T{Ind}_\la(g(A)).
\]

Let $C : = (A \ot 1) \op (1 \ot B)$ and let $W := \T{Int}\big(\T{Sp}(C)\big)$. The global index theorem \ref{t:indthe} then implies that
\[
\T{Ind}(h(C)) = \sum_{\mu \in W \cap Z(h)} \T{deg}_\mu(h) \cd \T{Ind}(\mu - C).
\]

By Lemma \ref{l:tenfun} the open set $W$ is included in $\Om \ti \B B_\ep(\la)$. Furthermore, the set of zeroes of $h$ is the diagonal $Z(h) = \{(\mu,\mu)\, | \, \mu \in Z(g)\}$. The intersection is therefore given by
\[
W \cap Z(h) = \fork{ccc}{
\{(\la,\la)\} & \T{for} & \la \in \Om \\
\emptyset & \T{for} & \la \in \pa\big( \T{Sp}(A) \big).
} 
\]
This implies that
\[
\T{Ind}(h(C)) = \fork{ccc}{
0 & \T{for} & \la \in \pa\big( \T{Sp}(A) \big) \\
\T{deg}_{(\la,\la)}(h) \cd \T{Ind}\big( (\la,\la) - C\big)
& \T{for} & \la \in \Om.  
}
\]
But it follows from Lemma \ref{l:degcom} and Lemma \ref{l:indproind} that
\[
\begin{split}
& \T{deg}_{(\la,\la)}(h) \cd \T{Ind}((\la,\la) - C)
 = \T{deg}_\la(g) \cd \T{Ind}\big( ( (\la - A) \ot 1) \op ( 1 \ot (\la -B)) \big) \\
& \q = -   \T{deg}_\la(g) \cd \T{Ind}(\la- A) \cd \T{Ind}(\la - B)
= \T{deg}_\la(g) \cd \T{Ind}(\la- A)
\end{split}
\]
when $\la \in \Om$. This proves the theorem.
\end{proof}

%

\section{Examples}\label{s:exa}
\subsection{Bergman space}
Let $\Omega \su \cc^n$ be a bounded open set. We let $L^2(\Omega)$ denote the Hilbert space of square integrable functions on $\Omega$ (w.r.t. Lebesgue measure). The closed subspace of holomorphic square integrable functions will be denoted by $H^2(\Omega)$. This is the \emph{Bergman-space} associated with $\Omega$.
%
%

For each $i \in \{1,\ldots,n\}$ we have the \emph{Toeplitz operator} $T_{z_i} \in \sL(H^2(\Omega))$ given by multiplication with the $i^{\T{th}}$ coordinate function $z_i : \Omega \to \cc$. We will then focus on the commuting tuple of Toeplitz operator $T_z := (T_{z_1},\ldots,T_{z_n})$. The next result can be found as \cite[Theorem 1.3]{SaShUp:TPF}.

\begin{theorem}\label{t:pseudo}
Suppose that $\Omega$ is pseudoconvex and that $\pa \Omega = \pa( \T{cl}(\Omega))$. Then the following assertions are valid:
\begin{enumerate}
\item The spectrum for $T_z$ is the closure of $\Omega$, $\T{Sp}(T_z) = \T{cl}(\Omega)$.
\item The essential spectrum for $T_z$ is the boundary of $\Omega$, $\T{Sp}_{\T{ess}}(T_z) = \pa \Omega$.
\item For each $\la \in \Omega$ the homology of $T_z -\la$ is concentrated in degree $0$ and $\T{Ind}(T_z - \la) = - 1$.
\end{enumerate}
\end{theorem}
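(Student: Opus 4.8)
The statement is \cite[Theorem 1.3]{SaShUp:TPF}; I sketch how one organises a proof. First, (2) is formal once (1) and (3) are available. If $\la \notin \pa\Omega$ then either $\la \in \Omega$, so $T_z - \la$ is Fredholm by (3), or $\la \notin \T{cl}(\Omega)$, so $T_z - \la$ is invertible by (1); hence $\T{Sp}_{\T{ess}}(T_z) \su \pa\Omega$. Conversely, if $\la \in \pa\Omega = \pa(\T{cl}(\Omega))$ then every neighbourhood of $\la$ contains points of $\Omega$ --- where $T_z - \mu$ has index $-1$ by (3) --- and points of the complement of $\T{cl}(\Omega)$ --- where $T_z - \mu$ is invertible, hence of index $0$, by (1); since the Fredholm locus is open with locally constant index \cite[Theorem 3]{Cur:FOD}, the operator $T_z - \la$ cannot be Fredholm, i.e. $\la \in \T{Sp}_{\T{ess}}(T_z)$. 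Thus everything reduces to (1) and (3).

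Both remaining points go through the Koszul--Dolbeault bicomplex $C_{p,q} := L^2_{(0,q)}(\Omega) \ot \La_p(\cc^n)$ with the commuting differentials $\bar\pa \colon C_{p,q} \to C_{p,q+1}$ (acting on the Dolbeault factor) and $\de := \sum_i M_{z_i - \la_i} \ot \ep_i^* \colon C_{p,q} \to C_{p-1,q}$; these commute because $z_i - \la_i$ is holomorphic. Note that $T_{z_i} - \la_i$ acts on $H^2(\Omega)$ simply as multiplication $M_{z_i - \la_i}$, since the product of an $L^2$ holomorphic function with the bounded holomorphic function $z_i - \la_i$ is again $L^2$ and holomorphic, so $K_*(T_z - \la, H^2(\Omega))$ is the Koszul complex of $M_{z-\la} := (M_{z_1-\la_1},\ldots,M_{z_n-\la_n})$. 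Since $\Omega$ is bounded and pseudoconvex, Hörmander's theorem makes the $L^2$-Dolbeault cohomology of $\Omega$ vanish in positive degrees and equal $H^2(\Omega)$ in degree $0$; running the spectral sequence that takes $\bar\pa$-cohomology first therefore identifies the total cohomology of $C_{**}$ with $H_*(T_z - \la, H^2(\Omega))$. Now for (1): if $\la \notin \T{cl}(\Omega)$ then $|z-\la|^2 := \sum_j|z_j-\la_j|^2 \geq d_0^2 > 0$ on $\Omega$, where $d_0 := \T{dist}(\la,\T{cl}(\Omega))$, so the bounded operator $s := \sum_i M_{\overline{(z_i-\la_i)}/|z-\la|^2} \ot \ep_i$ satisfies $\de s + s\de = 1$ (because $\sum_i (z_i-\la_i)\overline{(z_i-\la_i)}/|z-\la|^2 = 1$) and contracts every row $K_*(M_{z-\la}, L^2_{(0,q)}(\Omega))$; the rows of $C_{**}$ are thus exact, its total cohomology vanishes, $H_*(T_z-\la,H^2(\Omega)) = \{0\}$, and $\la \notin \T{Sp}(T_z)$. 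Hence $\T{Sp}(T_z) \su \T{cl}(\Omega)$.

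The reverse inclusion $\T{cl}(\Omega) \su \T{Sp}(T_z)$ falls out of (3), which is the analytic core. Fix $\la \in \Omega$; we must show $H_*(T_z-\la,H^2(\Omega))$ is $\cc$ in degree $0$ and $0$ otherwise, equivalently that evaluation at $\la$ induces an isomorphism $H^2(\Omega)/\big((z_1-\la_1)H^2(\Omega)\plp(z_n-\la_n)H^2(\Omega)\big) \cong \cc$ together with exactness of $K_*(T_z-\la,H^2(\Omega))$ in positive degrees. The constant function $1$ shows this quotient has dimension at least one. For the reverse estimate and the higher vanishing the plan is to run Hörmander's $L^2$-estimates with the plurisubharmonic weights $\varphi_N(z) := N\log|z-\la|$, whose logarithmic pole forces $\bar\pa$-solutions to vanish at $\la$: a Skoda-type division argument then writes every $f \in H^2(\Omega)$ with $f(\la) = 0$ as $\sum_i(z_i-\la_i)g_i$ with $g_i \in H^2(\Omega)$, and the analogous estimates applied to the higher Koszul syzygies of $(z_1-\la_1,\ldots,z_n-\la_n)$ yield exactness of $K_*(T_z-\la,H^2(\Omega))$ above degree $0$. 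Granting this, $T_z - \la$ is Fredholm with homology concentrated in degree $0$ and $\T{Ind}(T_z-\la) = (-1)^{0+1}\cd 1 = -1$; in particular $\la \in \T{Sp}(T_z)$, and since $\la$ was an arbitrary point of $\Omega$ and $\T{Sp}(T_z)$ is closed we get $\T{cl}(\Omega) \su \T{Sp}(T_z)$, finishing (1) and the theorem.

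The genuine obstacle is this last input. Hörmander's plain solvability of $\bar\pa$ is routine, but upgrading it to $L^2$-control of \emph{all} the syzygies of $(z_1-\la_1,\ldots,z_n-\la_n)$ --- so that the whole Koszul complex of $M_{z-\la}$ on $H^2(\Omega)$, not merely its $H_0$, is pinned down --- on an arbitrary bounded pseudoconvex $\Omega$ carrying no boundary regularity beyond $\pa\Omega = \pa(\T{cl}(\Omega))$ is where the real work resides; the remainder is formal manipulation of Koszul complexes, bicomplexes, and the index as an Euler characteristic.
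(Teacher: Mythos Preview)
The paper does not prove this theorem at all; it simply records it as \cite[Theorem 1.3]{SaShUp:TPF} and moves on. Your proposal already begins by identifying that reference, so at the level of what the paper actually does you are in complete agreement.

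Your sketch goes well beyond the paper by outlining how such a proof is organised. The reduction of (2) to (1) and (3) via the index-jump argument is clean and correct. The bicomplex argument for the inclusion $\T{Sp}(T_z) \su \T{cl}(\Omega)$ is the right idea: the contracting homotopy $s$ lives on $L^2$ rather than on $H^2$, so one genuinely needs to pass through the Koszul--Dolbeault bicomplex and use H\"ormander's $L^2$-vanishing in the columns to identify the total cohomology with $H_*(T_z-\la,H^2(\Omega))$. You are also right that the substantive analytic content lies entirely in (3), and that Skoda-type $L^2$ division with logarithmic weights is the mechanism; this is indeed where the cited paper does the real work. Since the present paper only \emph{uses} the theorem, your honest flagging of that step as the hard input is appropriate.
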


Let $g : \T{Sp}(T_z) = \T{cl}(\Omega) \to \cc^n$ be a holomorphic function. We then have the commuting tuple of Toeplitz operators $T_g = (T_{g_1},\ldots,T_{g_n})$ where each $T_{g_i} \in \sL(H^2(\Omega))$ acts by multiplication with $g_i$.
%

As an application of our local index theorem we can now obtain the following.

\begin{cor}
Suppose that $\Omega$ is pseudoconvex and that $\pa \Omega = \pa( \T{cl}(\Omega))$. Suppose furthermore that $0 \notin g(\pa \Omega)$ and that $\la \in Z(g)$. Then the commuting tuple $T_g$ is Fredholm and the local index at $\la$ is given by
\[
\T{Ind}_\la(T_g) = - \T{deg}_\la(g).
\]
In particular we have that $\T{Ind}(T_g) = - \T{deg}(0,g,\Om)$.
\end{cor}
\begin{proof}
By Theorem \ref{t:localind} and Theorem \ref{t:pseudo} we only need to prove that $T_f = f(T_z)$ for each holomorphic function $f : \T{cl}(\Om) \to \cc$. Let $\mu \in \Om$ and let $\ep_\mu : H^2(\Omega) \to \cc$ denote the functional given by evaluation at $\mu$. Let $\xi \in H^2(\Omega)$. It is then sufficient to show that $\ep_\mu\big( f(T_z)(\xi) \big) = f(\mu)\cd \ep_\mu(\xi)$. But this follows immediately from Proposition \ref{p:functoriality} and \cite[Theorem 3.16]{Tay:ACS}.
\end{proof}
%

\subsection{Hardy space}
In this section we will show that our local index theorem also applies to the Hardy space over the polydisc. Let us briefly recall some definitions. For general information we refer to \cite{Rud:FTP}.

We let $\cc[z_1,\ldots,z_n]$ denote the $\cc$-algebra of polynomials in the coordinate functions $z_1,\ldots,z_n : \B T^n \to \cc$ on the $n$-torus. The Hardy space over the polydisc is then defined as the closure of $\cc[z_1,\ldots,z_n]$ inside the Hilbert space $L^2(\B T^n)$ of square integrable functions on the $n$-torus. We will denote this Hardy space by $H^2(\B T^n)$.

The coordinate functions $z_1,\ldots,z_n$ act by multiplication on $H^2(\B T^n)$ giving rise to a commuting tuple $T_z := (T_{z_1},\ldots,T_{z_n})$ of Toeplitz operators. The next theorem can easily be deduced from \cite[Theorem 5]{Cur:FOD}:

\begin{theorem}\label{t:hardy}
The spectrum of $T_z$ is the closed polydisc, $\T{Sp}(T_z) = \B D^n$ whereas the essential spectrum of $T_z$ is the boundary, $\T{Sp}_{\T{ess}}(T_z) = \pa \B D^n$. For each point $\la \in U^n$ the homology of $T_z - \la$ is concentrated in degree $0$ and $\T{Ind}(T_z - \la) = - 1$.
\end{theorem}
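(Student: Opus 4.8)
The plan is to recognize the Toeplitz tuple $T_z$ on the Hardy space $H^2(\B T^n)$ as an $n$-fold tensor product of the one-variable situation, and then to import the scalar case together with the behaviour of Koszul homology under (completed) tensor products. More precisely, the Hardy space over the polydisc factors as a completed tensor product $H^2(\B T^n) \cong H^2(\B T) \hot \cdots \hot H^2(\B T)$ ($n$ factors), and under this identification the operator $T_{z_i}$ corresponds to $1 \ot \cdots \ot T_z \ot \cdots \ot 1$, with the one-variable unilateral shift $T_z$ in the $i$-th slot. So $T_z = (T_{z_1} \ot 1, \ldots, 1 \ot T_{z_n})$ is exactly a tuple of the type treated in Lemma \ref{l:tenfun} and Lemma \ref{l:indproind}.

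The steps I would carry out, in order: (1) Recall the classical one-variable facts about the unilateral shift $S = T_z$ on $H^2(\B T)$: its spectrum is the closed disc $\B D$, its essential spectrum is the circle $\pa \B D$, and for $|\la| < 1$ the operator $S - \la$ is Fredholm with one-dimensional cokernel and trivial kernel, hence $H_*(S-\la, H^2(\B T))$ is concentrated in degree $0$ with $\T{Ind}(S-\la) = -1$. (These are standard and are also the content of \cite[Theorem 5]{Cur:FOD} specialized to $n=1$.) (2) Apply Lemma \ref{l:tenfun} to get the inclusion $\T{Sp}(T_z) \su \B D^n$, and conversely use that $\B D^n$ is covered: for a point $\la \in \B D^n$ with all $|\la_i| < 1$, apply Lemma \ref{l:indproind} (together with the exactness of $\cdot \hot \sH$) to see $H_*(T_z - \la, H^2(\B T^n)) \cong H_*(S-\la_1,H^2(\B T)) \ot \cdots \ot H_*(S-\la_n,H^2(\B T))$, which is concentrated in degree $0$, one-dimensional, so $T_z - \la$ is not invertible and $\T{Ind}(T_z-\la) = (-1)^n \cdot \T{sign} = -1$ after tracking the degree shift (the index formula of Lemma \ref{l:indproind} gives the product of the one-variable indices up to the sign convention there). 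For points with some $|\la_i| = 1$ but not all, one factor has $\la_i \in \pa \B D = \T{Sp}_{\T{ess}}(S)$, so $S - \la_i$ is not Fredholm; the Künneth-type computation shows $T_z - \la$ is not invertible either, so $\B D^n \su \T{Sp}(T_z)$, giving equality. (3) For the essential spectrum, pass to the Calkin algebra of $H^2(\B T^n)$ and repeat the tensor-product analysis for $\pi(T_z)$, or equivalently use Theorem \ref{t:essspe}: $\la \notin \T{Sp}_{\T{ess}}(T_z)$ iff $T_z - \la$ is Fredholm, and from the Künneth computation in (2) this Fredholmness holds precisely when every factor $S - \la_i$ is Fredholm, i.e. $|\la_i| < 1$ for all $i$, so $\T{Sp}_{\T{ess}}(T_z) = \B D^n \setminus U^n = \pa \B D^n$. (4) The claim about $\la \in U^n$ is then exactly the degree-$0$ concentration and index computed in (2).

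The main obstacle is bookkeeping rather than conceptual: one must be careful that the completed tensor product $H^2(\B T^n) \cong \hot_{i=1}^n H^2(\B T)$ genuinely intertwines $T_{z_i}$ with the shift in the $i$-th slot (this is clear on the dense subalgebra of polynomials and extends by continuity), and one must track the sign in the index formula of Lemma \ref{l:indproind}, which carries an overall minus sign for a tensor product of two Fredholm tuples; iterating it $n$ times and combining with the one-variable value $\T{Ind}(S-\la_i) = -1$ should reproduce $\T{Ind}(T_z - \la) = -1$, but the exact route through the lemma (which is stated for two factors) requires an easy induction. Alternatively, and perhaps more cleanly, one can bypass the iterated index arithmetic entirely by invoking \cite[Theorem 5]{Cur:FOD} directly for all $n$ at once, as the statement of the theorem suggests; in that case the only work is extracting items (1)--(3) from Curto's result and observing that the homology is concentrated in degree zero at interior points because the only nonvanishing Koszul homology is $H_0$, consistent with $\T{Ind} = -1$ and $\T{Dim}\, H_0 = 1$.
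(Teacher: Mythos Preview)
The paper does not prove this theorem; it only states that the result ``can easily be deduced from \cite[Theorem 5]{Cur:FOD}''. Your closing fallback of invoking Curto directly is therefore exactly the paper's approach, and your tensor-product argument is a natural way to unpack what that citation contains. Most of it is sound: the factorisation $H^2(\B T^n)\cong H^2(\B T)^{\hot n}$ is standard, Lemma~\ref{l:tenfun} gives $\T{Sp}(T_z)\su \B D^n$, and for $\la\in U^n$ the K\"unneth-type computation in the proof of Lemma~\ref{l:indproind} gives $H_*(T_z-\la)\cong\bigotimes_i H_*(S-\la_i)$, concentrated in degree zero and one-dimensional; iterating the sign in Lemma~\ref{l:indproind} yields $\T{Ind}(T_z-\la)=-1$. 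The reverse inclusion $\B D^n\su\T{Sp}(T_z)$ then follows from closedness of the spectrum.

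There is, however, a genuine gap in your step (3). You claim the K\"unneth computation shows $T_z-\la$ is Fredholm \emph{precisely} when each $S-\la_i$ is, but that computation was carried out only when all factors are Fredholm: the identification $H_q(1\ot B,\sH\hot\sG)\cong\sH\hot H_q(B,\sG)$ uses closedness of the range of $d_B$, which fails for $|\la_i|=1$. Your first alternative, passing to the Calkin algebra of $H^2(\B T^n)$ and repeating the tensor analysis, does not work either, since that Calkin algebra is not a tensor product of the one-variable Calkin algebras. The inclusion $\pa\B D^n\su\T{Sp}_{\T{ess}}(T_z)$ therefore needs a separate argument. One fix: for $\la$ with exactly one $|\la_i|=1$ and the remaining $|\la_j|<1$, divide out the Fredholm factors first (their differentials do have closed range) to obtain $H_0(T_z-\la)\cong H^2(\B T)/\T{Im}(S-\la_i)$, which is infinite-dimensional because $S-\la_i$ has dense but non-closed range; such $\la$ are dense in $\pa\B D^n$, and the essential spectrum is closed.
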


Let $g : \T{Sp}(T_z) = \B D^n \to \cc^n$ be a holomorphic function. We then have the commuting tuple of Toeplitz operators $T_g = (T_{g_1},\ldots,T_{g_n})$ where $T_{g_i}$ acts by multiplication with $g_i$ on $H^2(\B T^n)$. As in the case of the Bergman spaces it is not hard to see that $T_g = g(T_z)$ where $g(T_z)$ is constructed using the analytic functional calculus. An application of our local index theorem now yields the following.

\begin{cor}
Suppose that $0 \notin g(\pa \B D^n)$ and that $\la \in Z(g)$. Then the commuting tuple $T_g$ is Fredholm and the local index at $\la$ is given by
\[
\T{Ind}_\la(T_g) = - \T{deg}_\la(g).
\]
In particular we have that $\T{Ind}(T_g) = - \T{deg}(0,g,U^n)$ where $U := \B D^\ci$ denotes the open disc.
\end{cor}

%
%
%
%
%

\bibliographystyle{amsalpha-lmp}

\def\cprime{$'$}
\providecommand{\bysame}{\leavevmode\hbox to3em{\hrulefill}\thinspace}
\providecommand{\MR}{\relax\ifhmode\unskip\space\fi MR }
\providecommand{\MRhref}[2]{%
  \href{http://www.ams.org/mathscinet-getitem?mr=#1}{#2}
}
\providecommand{\href}[2]{#2}

\end{document}